\begin{document}

\newtheorem{theorem}{Theorem}
\newtheorem{lemma}[theorem]{Lemma}
\newtheorem{claim}[theorem]{Claim}
\newtheorem{cor}[theorem]{Corollary}
\newtheorem{prop}[theorem]{Proposition}
\newtheorem{definition}{Definition}
\newtheorem{question}[theorem]{Question}

%%%%%%%%%%%%%%%%%%%%%%%%%
% Alphabet calligraphic %
%%%%%%%%%%%%%%%%%%%%%%%%%
\def\cA{{\mathcal A}}
\def\cB{{\mathcal B}}
\def\cC{{\mathcal C}}
\def\cD{{\mathcal D}}
\def\cE{{\mathcal E}}
\def\cF{{\mathcal F}}
\def\cG{{\mathcal G}}
\def\cH{{\mathcal H}}
\def\cI{{\mathcal I}}
\def\cJ{{\mathcal J}}
\def\cK{{\mathcal K}}
\def\cL{{\mathcal L}}
\def\cM{{\mathcal M}}
\def\cN{{\mathcal N}}
\def\cO{{\mathcal O}}
\def\cP{{\mathcal P}}
\def\cQ{{\mathcal Q}}
\def\cR{{\mathcal R}}
\def\cS{{\mathcal S}}
\def\cT{{\mathcal T}}
\def\cU{{\mathcal U}}
\def\cV{{\mathcal V}}
\def\cW{{\mathcal W}}
\def\cX{{\mathcal X}}
\def\cY{{\mathcal Y}}
\def\cZ{{\mathcal Z}}

%%%%%%%%%%%%%%%%%%%%%%%
% Alphabet blackboard %
%%%%%%%%%%%%%%%%%%%%%%%
\def\A{{\mathbb A}}
\def\B{{\mathbb B}}
\def\C{{\mathbb C}}
\def\D{{\mathbb D}}
\def\E{{\mathbb E}}
\def\F{{\mathbb F}}
\def\G{{\mathbb G}}
\def\I{{\mathbb I}}
\def\J{{\mathbb J}}
\def\K{{\mathbb K}}
\def\L{{\mathbb L}}
\def\M{{\mathbb M}}
\def\N{{\mathbb N}}
\def\O{{\mathbb O}}
\def\P{{\mathbb P}}
\def\Q{{\mathbb Q}}
\def\R{{\mathbb R}}
\def\S{{\mathbb S}}
\def\T{{\mathbb T}}
\def\U{{\mathbb U}}
\def\V{{\mathbb V}}
\def\W{{\mathbb W}}
\def\X{{\mathbb X}}
\def\Y{{\mathbb Y}}
\def\Z{{\mathbb Z}}

\def\ep{{\mathbf{e}}_p}
\def\em{{\mathbf{e}}_m}

\def\scr{\scriptstyle}
\def\\{\cr}
\def\({\left(}
\def\){\right)}
\def\[{\left[}
\def\]{\right]}
\def\<{\langle}
\def\>{\rangle}
\def\fl#1{\left\lfloor#1\right\rfloor}
\def\rf#1{\left\lceil#1\right\rceil}
\def\le{\leqslant}
\def\ge{\geqslant}
\def\eps{\varepsilon}
\def\mand{\qquad\mbox{and}\qquad}

\def\vec#1{\mathbf{#1}}
\def\inv#1{\overline{#1}}
\def\vol#1{\mathrm{vol}\,{#1}}
\def\dist{\mathrm{dist}}

\def\fA{{\mathfrak A}}
\def\fU{{\mathfrak U}}
\def\fV{{\mathfrak V}}

\def\GL{\mathrm{GL}}
\def\SL{\mathrm{SL}}

\def\Hba{\overline{\cH}_{a,m}}
\def\Hta{\widetilde{\cH}_{a,m}}
\def\Hb1{\overline{\cH}_{m}}
\def\Ht1{\widetilde{\cH}_{m}}

\def\Zm{\Z/m\Z}

\def\Err{{\mathbf{E}}}

\newcommand{\comm}[1]{\marginpar{%
\vskip-\baselineskip %raise the marginpar a bit
\raggedright\footnotesize
\itshape\hrule\smallskip#1\par\smallskip\hrule}}

\def\xxx{\vskip5pt\hrule\vskip5pt}

%%%%%%%%%%%%%%%%%%
%% PAPER BEGINS %%
%%%%%%%%%%%%%%%%%%

\title{\bf %%Distribution of Points on 
Modular Hyperbolas}

\author{
{\sc Igor E. Shparlinski} \\
{Department of Computing, Macquarie University} \\
{Sydney, NSW 2109, Australia} \\
{igor.shparlinski@mq.edu.au}}

\date{\today}
\pagenumbering{arabic}

\maketitle

\begin{abstract}
We give a survey of a variety of recent results
about the distribution and some geometric properties
of points $(x,y)$ on modular hyperbolas
$xy \equiv a \pmod m$. We also  outline a very diverse range of
applications of such results,  discuss multivariate
generalisations and suggest a number of   open problems
of different levels of difficulty. 
\end{abstract}

\section{Introduction}
\label{sec:intro}

\subsection{Modular Hyperbolas}

For a positive integer  $m$ and an arbitrary integer $a$ with $\gcd(a,m) =1$,
we consider the  set of points $(x,y)$ on the  modular hyperbola
$$
\cH_{a,m} = \{(x,y) \ : \  xy \equiv a \pmod m\}.
$$
We give a survey of various results about the distribution and
some geometric properties of points
on $\cH_{a,m}$. We also briefly consider 
some multidimensional
generalisations, which often require   very different techniques.
Several  open problems are formulated as well. Our main goal is 
to show that although $\cH_{a,m}$ is defined by one of the simplest 
possible polynomial congruences, it exhibits many mysterious
properties and  
very surprising links with a wide variety of 
classical number theoretic questions and beyond.

In this survey, we do not present  complete proofs
but rather explain
their underlying ideas and specific ingredients.
Typically the error terms in the asymptotic formulas
we give contain a factor $m^{o(1)}$. In most cases it
can be replaced by a more explicit function. Furthermore,
when $m$ is prime it can usually (but not always) be replaced by
just some low power of $\log m$.

There is a large number of
papers in this area which rather routinely study seemingly distinct, but  
in fact closely related,
problems about $\cH_{a,m}$ on a case by case basis.
Here,  we  explain some standard principles which can be
used to derive these and many other results of similar spirit
about the points
on
$\cH_{a,m}$ as simple corollaries of just one general
result
about the uniformity of
distribution of  points on  $\cH_{a,m}$  in certain domains.
In Section~\ref{sec:Indiv}, such a  result is presented in
 Theorem~\ref{thm:UD Gen} and derived
in a very straightforward fashion from bounds
of Kloosterman sums (see~\eqref{eq:Kloost bound} below)
using some standard arguments. Results of this type are quite 
standard and can be obtained for many other curves (at least in
the case of prime modulus $m$, where the Bombieri bound~\cite{Bomb}
provides a readily available substitute for~\eqref{eq:Kloost bound}).  

However,  most of the other 
results on modular hyperbolas rely on some rather subtle number theoretic arguments 
which in general do not apply to other polynomial congruences. 
This places modular hyperbolas in a very special position 
and shows that they define a mathematically much richer structure than 
a ``typical'' polynomial congruence. 

\subsection{Distribution of Points  and Kloosterman Sums}

Since  the distribution of points on  $\cH_{a,m}$ 
is our primal object of study, we introduce the following 
definition.
Given two sets of integers $\cX$ and $\cY$, we
write 
$$
\cH_{a,m}(\cX, \cY) = \{(x,y) \in \cH_{a,m} \ : \  x \in \cX, \ y  \in \cY \}.
$$
Since the case of $a=1$ is of special interest we also write
$$
\cH_m = \cH_{1,m}, \mand \cH_{m}(\cX, \cY) = \cH_{1,m}(\cX, \cY).
$$

Obtaining precise asymptotic formulas for and establishing the positivity
of $\#\cH_{a,m}(\cX, \cY)$ for various ``interesting'' sets  $\cX$ and
$\cY$ have been the central themes of many works in this direction.
Certainly the problem becomes harder and far more interesting when the
sets  $\cX$ and $\cY$ become ``thinner''.

Let
$$
\em( z) = \exp( 2 \pi i z/m).
$$

One immediately observes that the well-known bound
\begin{equation}
\label{eq:Kloost bound}
|K_m(r,s)|
\le \(m \gcd(r,s,m)\)^{1/2+o(1)},
\end{equation} of 
{\it Kloosterman sums\/}
\begin{equation}
\label{eq:Kloost}
K_m(r,s) = \sum_{\substack{(x,y) \in \cH_{m} \\ 1 \le x,y \le m}}
\em\(rx + sy\),
\end{equation}
see~\cite[Corollary~11.12]{IwKow},
   can be used to study  the points on $ \cH_{a,m}$.
One only needs to recall some standard tools
which link exponential sums with
uniformity of distribution which we present in
Section~\ref{sec:UD tools}.

We remark that most of the  results which rely  only
on~\eqref{eq:Kloost bound}
do not appeal to anything specific about the congruence $xy \equiv a \pmod m$,
and at least when $m$ prime they can be extended to the distribution of
solutions to more general congruences $f(x,y) \equiv 0 \pmod m$, with
a polynomial $f$ with integer coefficients. In the case of prime $m$,
the Bombieri bound~\cite{Bomb} of exponential sums along a curve
replaces the bound~\eqref{eq:Kloost bound}. Although, as we have mentioned,
we present such a generic result in Section~\ref{sec:Indiv}, our
main purpose is to outline some more  intricate arguments, which use
special properties of the congruence $xy \equiv a \pmod m$ and cannot be
generalised to other congruences. In particular,
in Section~\ref{sec:Aver} we discuss the behaviour
of points on $\cH_{a,m}$ on average over $a$. We also
describe   some geometric properties of  the set  $\cH_{a,m}$
in Sections~\ref{sec:Dist}--\ref{sec:Visible}, 
show that it cannot be too concentrated even in very
small squares in Section~\ref{sec:Cocentr} and discuss
some arithmetic properties of elements of $\cH_{a,m}$
in Section~\ref{sec:Arith Fun}.

Before presenting results about the properties
of $\cH_{a,m}$,  we give a short overview of
the surprisingly diverse variety
of number theoretic tools which have been used in the study
of $\cH_{a,m}$ and their multivariate generalisations,
see Sections~\ref{sec:Exp}--\ref{sec:ANT}.

Finally, we demonstrate the wealth and diversity 
of various applications of the results on the
distribution of points on $\cH_{a,m}$. Some of these 
applications  are quite
natural with very transparent connections to $\cH_{a,m}$, see
Section~\ref{sec:Lehm}. However, there are also
several   less obvious and thus  much 
more exciting applications, 
see Sections~\ref{sec:Angles}--\ref{sec:Dlog Fact}.
An especially  striking example  of such unexpected
applications is given by a result  of~\cite{Merel} on
torsions of elliptic curves, see Section~\ref{sec:Ell Tors}.

\subsection{Notation}

Throughout the paper, any implied constants in symbols $O$, $\ll$
and $\gg$ may occasionally depend, where obvious, on the real positive
parameter  $\eps$ and are absolute otherwise. We recall
that the notations $U = O(V)$,  $U \ll V$ and  $V \gg U$  are
all equivalent to the statement that $|U| \le c V$ holds 
with some constant $c> 0$.

We use $p$, with or without a subscript,
to denote a prime number and use $m$ to denote a positive integer.

We denote by $\Zm$  the residue ring modulo $m$.
Typically we assume that  the
set $\{0, \ldots, m-1\} $ is used
to represent the elements of $\Zm$.
Accordingly, we often consider the following subset of $\cH_{a,m}$
$$
\Hba  = \cH_{a,m}\cap [0,m-1]^2 .
$$
We also put
$$
\Hb1  = \cH_{m}\cap [0,m-1]^2.
$$

We always follow the convention that  arithmetic operations in the 
arguments of $\em$ are performed modulo $m$. In particular,
the Kloosterman sums~\eqref{eq:Kloost} can 
also be written as 
$$
K_m(r,s) = \sum_{\substack{x=1 \\ \gcd(x,m)=1}}^m
\em\(rx + sx^{-1}\).
$$

As usual,  $\omega(k)$, $\tau(k)$ and
$\varphi(k)$ denote the number of distinct prime divisors,
the number of positive integer divisors and
the Euler function of $k\ge 1$, respectively.

Finally,   $\mu(k)$ denotes the M\"obius function.
  We recall that $\mu(1) = 1$, $\mu(k) = 0$ if $k \ge 2$ is not
squarefree   and $\mu(k) = (-1)^{\omega(k)}$ otherwise.

\subsection{Acknowledgements}

Thanks go to Mizan Khan who introduced
the beautiful and mysterious world of modular 
hyperbolas  to the author.

The author would also like to thank Alexey Ustinov, Arne Winterhof 
and the referee, 
for a careful reading of the
manuscript and making many valuable suggestions.

This work was supported in part by ARC grant DP1092835.

\section{Number Theory Background}

\subsection{Exponential and Character Sums}
\label{sec:Exp}

We have already mentioned the prominent role of
Kloosterman sums~\eqref{eq:Kloost}
and the bound~\eqref{eq:Kloost bound}
in particular.

Most of the works also use the identity
\begin{equation}
\label{eq:Ident add char}
\frac{1}{m}\sum_{r \in \Zm} \em(rv) =
\left\{\begin{array}{ll}
1,&\quad\text{if $v\equiv 0 \pmod m$,}\\
0,&\quad\text{if $v\not\equiv 0 \pmod m$,}
\end{array}
\right.
\end{equation}
to express various characteristic
functions via exponential sums. 
Thus, we relate various counting questions
to exponential sums.

It is very often complemented by the bound
\begin{equation}
\label{eq:Incompl}
   \sum_{z=W+1}^{W+Z} \em(rz) \ll \min\{Z, m/|r|\}
\end{equation}
which holds for any integers  $r$, $W$ and $Z\ge 1$ with $0 < |r| \le m/2$,
see~\cite[Bound~(8.6)]{IwKow}.

We now recall  the estimate from~\cite{Shp1}
of exponential sums with rational functions
of special type, which generalises the
bound~\eqref{eq:Kloost bound} of 
Kloosterman sums~\eqref{eq:Kloost}.

\begin{lemma}
\label{lem:Exp Sparse}
Let $n_1, \ldots, n_s$ be $s \ge 2$ nonzero fixed pairwise distinct
integers. Then the bound
$$
\max_{\gcd(a_1, \ldots, a_s,m)=d}
\left|\sum_{\substack{z=1\\ \gcd(z,m) = 1}}^m
\em(a_1 z^{n_1} + \ldots + a_s z^{n_s})\right| \le d^{1/s} m^{1 - 1/s + o(1)}
$$
  holds.
\end{lemma}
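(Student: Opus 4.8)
The plan is to reduce the claim to prime power moduli and then treat prime and higher prime power moduli by separate tools. The first step exploits the multiplicativity of the complete sum: if $m = m_1 m_2$ with $\gcd(m_1,m_2)=1$, the Chinese Remainder Theorem splits $\em$ and factors the sum as a product of two sums of exactly the same shape, modulo $m_1$ and $m_2$, with the coefficients $a_i$ rescaled by units. Since the target bound $d^{1/s} m^{1-1/s}$ is itself multiplicative once we write $d = d_1 d_2$ with $d_j = \gcd(a_1,\ldots,a_s,m_j)$, it suffices to prove the estimate for $m = p^k$; the number of prime factors and the divisor bound $\tau(m) = m^{o(1)}$ are harmlessly absorbed into the factor $m^{o(1)}$.

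For a prime modulus $m = p$, after multiplying through by a suitable power $z^N$ (with $N = -\min_i n_i$ if some exponents are negative) I would write $f(z) = \sum_i a_i z^{n_i}$ as $g(z)/z^N$ with $g$ a genuine polynomial, and apply the Weil bound for exponential sums of rational functions. Because the number $s$ of distinct monomials and the exponents $n_i$ are fixed, the degree stays bounded and the Weil bound yields square root cancellation, i.e. $p^{1/2 + o(1)}$ whenever $\gcd(a_1,\ldots,a_s,p) = 1$. Since $s \ge 2$ gives $1/2 \le 1 - 1/s$, this is already stronger than required. The only subtlety is the degenerate case in which $f$ reduces modulo $p$ to a constant plus an additive polynomial; this forces congruence conditions on the $a_i$ and is exactly accounted for by the factor $d^{1/s}$, since the trivial bound $\varphi(p)$ is admissible once $d = p$.

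The prime power case $m = p^k$ with $k \ge 2$ is the heart of the matter and the step I expect to be the main obstacle. Here I would use the $p$-adic descent: writing $z = x + p^{k-1} y$ with $y$ running modulo $p$ and Taylor expanding $z^{n_i} \equiv x^{n_i} + n_i x^{n_i - 1} p^{k-1} y \pmod{p^k}$ (valid since $2(k-1) \ge k$), the sum over $y$ collapses by \eqref{eq:Ident add char} to the condition $f'(x) \equiv 0 \pmod p$, giving $S = p \sum_{x : f'(x) \equiv 0 \,(\mathrm{mod}\, p)} \mathbf{e}_{p^k}(f(x))$, a sum over units $x$ modulo $p^{k-1}$. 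Iterating this descent peels off powers of $p$ while confining $x$ to the critical locus of $f$, and the accumulated saving is governed by the zeros of the sparse polynomial $x f'(x) = \sum_i a_i n_i x^{n_i}$, which again has only $s$ terms.

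The difficulty, and where real care is needed, is that generic prime power estimates of Cochrane--Zheng type control the exponent only through the degree $\max_i |n_i|$, whereas the claimed exponent $1 - 1/s$ must be governed by the much smaller number $s$ of monomials. The main work is therefore to exploit the $s$-term sparsity, rather than the degree, throughout the descent, controlling the number and multiplicities of the relevant critical points uniformly in $p$ and $k$, so that the levels of the descent aggregate to a saving of exactly $m^{1/s}$ and produce the factor $d^{1/s}$ in the worst case. Establishing this uniformity in $k$ is the crux; by comparison the prime modulus case and the multiplicative reduction are routine.
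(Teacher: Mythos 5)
The survey itself gives no proof of Lemma~\ref{lem:Exp Sparse}: it is quoted verbatim from~\cite{Shp1}, so there is no in-paper argument to compare yours against. The argument in the cited source is, moreover, of a different nature (a Mordell-type moment argument built on the multiplicative substitution $z\mapsto tz$, which preserves the monomial support and reduces matters to counting solutions of systems of congruences $\sum_j z_j^{n_i}\equiv\sum_j w_j^{n_i}\pmod m$, $i=1,\ldots,s$), whereas you propose Chinese Remaindering, the Weil bound at primes, and a Cochrane--Zheng stationary-phase descent at prime powers. Your first two steps are fine: the factorisation of the complete sum and of $d^{1/s}m^{1-1/s}$ over coprime factors is routine, and for $m=p$ with $p$ large relative to the fixed exponents $n_1,\ldots,n_s$ the Weil bound for rational functions gives $p^{1/2+o(1)}$ unless $p\mid d$, which is stronger than needed since $s\ge 2$; the finitely many small primes cause no harm at the level of prime moduli.

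The genuine gap is exactly where you locate it, and it is not closed. For the descent at $m=p^k$ to aggregate to a saving of $m^{1/s}$ you need, uniformly in $k$, that every unit critical point --- every root of the $s$-term Laurent polynomial $zf'(z)=\sum_i a_i n_i z^{n_i}$ --- has multiplicity at most $s-1$ modulo $p$; this is precisely what would convert ``$s$ monomials'' into the exponent $1-1/s$. That multiplicity bound for lacunary polynomials holds in characteristic zero, hence for all $p$ exceeding a constant depending only on the $n_i$, but it fails at small primes: already for $f(z)=z+z^3$ (so $s=2$) one has $zf'(z)=z+3z^3\equiv z(z+1)^2\pmod 2$, a unit double root of multiplicity $2>s-1$. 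Unlike the prime-modulus case, these finitely many exceptional primes cannot simply be absorbed into $m^{o(1)}$, since $k$ may be arbitrarily large and the trivial bound $\varphi(p^k)$ is not $O\bigl(d^{1/s}p^{k(1-1/s)+o(1)}\bigr)$ there. So the proposal requires a further idea --- either a finer $p$-adic analysis of the critical points at the bounded set of bad primes, or a counting argument in the spirit of~\cite{Shp1} that avoids multiplicities altogether --- and, as written, the central assertion that the levels of the descent aggregate to a saving of exactly $m^{1/s}$ is announced rather than proved.
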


We recall that several more bounds of exponential 
sums with sparse polynomials of large degree
are also given in~\cite{Bour0,Bour3,CPR}.

However, in many cases using bounds
of  multiplicative character sums yields
stronger results.

Let $\varPhi_m$ be the set of all $\varphi(m)$
multiplicative characters modulo  $m$.
   We have the following analogue
of~\eqref{eq:Ident add char}. For any integer $r$,
\begin{equation}
\label{eq:Ident mult char}
\frac{1}{\varphi(m)}
   \sum_{\chi \in \varPhi_m}\chi\(r\)
=\left\{\begin{array}{rll}1 &\text{if}\ r
\equiv 1 \pmod m, \\ 0 &\text{otherwise.}\end{array}\right.
\end{equation}

We also use $\chi_0$ to denote the principal character.

The following result  is a combination of  the
P{\'o}lya-Vinogradov  (for $\nu =1$) and Burgess
(for $\nu\ge2$)  bounds,
see~\cite[Theorems~12.5 and 12.6]{IwKow}.

\begin{lemma}
   \label{lem:PVB} For arbitrary  integers
$W$ and $Z$ with  $1 \le Z \le m$, the bound
$$
\max_{\substack{\chi \in \varPhi_m\\ \chi \ne \chi_0}}
\left| \sum_{z = W+1}^{W+Z}
\chi(z)\right|  \le Z^{1 -1/\nu} m^{(\nu+1)/4\nu^2 + o(1)}
$$
holds with $\nu = 1,2,3$ for any $m$ and with an arbitrary
positive integer $\nu$ if $m =p$ is a prime.
\end{lemma}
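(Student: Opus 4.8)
The plan is to treat the two ranges of $\nu$ separately, since they rest on genuinely different inputs: an elementary completion argument for $\nu=1$ and the Burgess amplification method for $\nu\ge2$. In the first case the asserted bound reads $m^{1/2+o(1)}$, which is the P\'olya--Vinogradov inequality, and I would prove it by completion. Reducing first to the primitive character $\chi^{*}$ modulo the conductor $d\mid m$ that induces $\chi$ (the common factors cost only $m^{o(1)}$), I would write $\chi^{*}(z) = G(\chi^{*})^{-1}\sum_{r \bmod d}\overline{\chi^{*}}(r)\,\mathbf{e}_d(rz)$ in terms of additive characters modulo $d$ and the Gauss sum $G(\chi^{*})$, for which $|G(\chi^{*})| = d^{1/2}$. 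Substituting into $\sum_{z=W+1}^{W+Z}\chi^{*}(z)$ and exchanging the order of summation reduces everything to incomplete geometric sums $\sum_z \mathbf{e}_d(rz)$, bounded by~\eqref{eq:Incompl}. Summing $\min\{Z, d/|r|\}$ over $r$ produces a factor $d\log d$, and division by $|G(\chi^{*})| = d^{1/2}$ gives $d^{1/2}\log d \le m^{1/2+o(1)}$; note that no saving in $Z$ appears, matching the exponent $1-1/\nu = 0$.

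For $\nu\ge2$ I would run the Burgess method, described first for $m=p$ prime. Writing $S = \sum_{W<z\le W+Z}\chi(z)$, the first step is a shifting argument: for $1\le a\le A$ and $1\le b\le B$ one has $S = \sum_{W<z\le W+Z}\chi(z+ab) + O(ab)$, and averaging over all such $a,b$ gives
$$
AB\,|S| \le \left|\sum_{a\le A}\sum_{b\le B}\sum_{W<z\le W+Z}\chi(z+ab)\right| + O(A^2B^2).
$$
Next I would exploit multiplicativity through $\chi(z+ab) = \chi(a)\chi(a^{-1}z + b)$, where $a^{-1}$ is the inverse of $a$ modulo $p$; collecting terms by the residue $t\equiv a^{-1}z \pmod p$ and letting $J(t)$ count the pairs $(a,z)$ producing $t$, the main term becomes $\sum_t J(t)\,|\sum_{b\le B}\chi(t+b)|$. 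A threefold H\"older inequality with exponents $\tfrac{\nu}{\nu-1},\,2\nu,\,2\nu$ then separates this into the factors $(\sum_t J(t))^{1-1/\nu}$, $(\sum_t J(t)^2)^{1/(2\nu)}$ and the moment $(\sum_t|\sum_{b\le B}\chi(t+b)|^{2\nu})^{1/(2\nu)}$.

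The two combinatorial sums are elementary: $\sum_t J(t) = AZ$, while $\sum_t J(t)^2$, which counts solutions of $a'z\equiv az'\pmod p$, is $(AZ)^{1+o(1)}$ in the range $AZ<p$ by divisor bounds. The arithmetic heart is the moment sum: expanding the $2\nu$-th power and applying Weil's bound shows that each complete sum $\sum_{t\bmod p}\chi((t+b_1)\cdots(t+b_\nu))\,\overline{\chi}((t+c_1)\cdots(t+c_\nu))$ is $O(\nu\,p^{1/2})$ unless the multisets $\{b_i\}$ and $\{c_j\}$ coincide, the diagonal contributing $\ll_\nu B^\nu p$; hence the moment is $\ll_\nu B^\nu p + B^{2\nu}p^{1/2}$. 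Assembling these estimates, taking $B$ of order $p^{1/(2\nu)}$ so that the diagonal governs, choosing $A$ optimally against the error $O(A^2B^2)$, and combining the exponents delivers $|S|\le Z^{1-1/\nu}p^{(\nu+1)/4\nu^2+o(1)}$, which is the stated bound.

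The main obstacle is the passage to composite $m$, which is precisely why the statement restricts to $\nu\le3$ in that generality. Weil's bound governs only the prime case; for composite $m$ one reduces the relevant complete multiplicative character sums to the prime-power components via the Chinese Remainder Theorem, and controlling the off-diagonal together with the ``partially diagonal'' terms (where factors coincide modulo some but not all prime factors of $m$) succeeds cleanly only for $\nu\le3$ in general, or for arbitrary $\nu$ when $m$ is cubefree. I expect this bookkeeping of the mixed complete sums modulo prime powers to be the most delicate part of the argument.
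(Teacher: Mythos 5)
The paper does not actually prove this lemma; it quotes it from~\cite[Theorems~12.5 and~12.6]{IwKow}, so your proposal has to be measured against the standard proofs. Your treatment of $\nu=1$ (reduction to the primitive character, Gauss-sum expansion, the bound~\eqref{eq:Incompl}) is the classical P\'olya--Vinogradov argument and is fine, and the skeleton of your Burgess argument for $\nu\ge 2$ --- the shift by $ab$, the identity $\chi(z+ab)=\chi(a)\chi(a^{-1}z+b)$, the threefold H\"older step, the counting of $J(t)$ and $J(t)^2$, and the Weil-bound estimate of the $2\nu$-th moment --- is also the right one.

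The genuine gap is in your handling of the shift error. You bound the discrepancy between $\sum_{z}\chi(z)$ and $\sum_{z}\chi(z+ab)$ trivially by $O(ab)$, which after averaging contributes $O(AB)$ to $|S|$. But the rest of your analysis forces $A\gg Zp^{-1/(2\nu)}$ (so that the off-diagonal term $B^{2\nu}\sqrt{p}$ of the moment is affordable) and $B\gg p^{1/(2\nu)}$ (so that the diagonal term $B^{\nu}p$ is), whence $AB\gg Z$. Since the target bound $Z^{1-1/\nu}p^{(\nu+1)/4\nu^2}$ is below $Z$ precisely when $Z>p^{(\nu+1)/4\nu}$ --- which is exactly the range in which the lemma says anything beyond the trivial estimate $|S|\le Z$ --- your error term swamps the main term wherever the conclusion is non-trivial. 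The standard repair, and the reason the proof in~\cite{IwKow} is organised as an induction on the interval length, is to write the shift discrepancy as two genuine character sums over intervals of length $ab\le AB\le Z/2$ and to bound \emph{those} by the inductive hypothesis, giving a contribution $\ll (AB)^{1-1/\nu}p^{(\nu+1)/4\nu^2}$ that is absorbed into the final constant. A secondary, smaller point: your claim that $\sum_t J(t)^2\ll (AZ)^{1+o(1)}$ ``by divisor bounds'' is literally justified only when the interval starts at the origin, so that $a'z\equiv az'\pmod p$ with $AZ<p$ collapses to the integer equation $a'z=az'$; for a general shifted interval the products are no longer bounded by $AZ$ and one needs the slightly longer Farey/lattice-point count, though the bound itself remains true for $AZ<p$ and $A\le Z$.
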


The identity~\eqref{eq:Ident mult char} immediately implies
that for $1 \le Z \le m$
\begin{equation}
\label{eq:2nd Moment}
\sum_{\chi \in \varPhi_m}
\left| \sum_{z = W+1}^{W+Z} \chi(z)\right|^2 = \varphi(m)
\sum_{\substack{z =W+1 \\ \gcd(z, m)=1}}^{W+Z} 1 \le \varphi(m) Z
\end{equation}
which has been used in many works on $\cH_{a,m}$.

Furthermore, it turns out, that  sometimes one gets better results using the
following fourth moment estimate from~\cite{ACZ}
(for prime $m=p$) and~\cite{FrIw2} 
(for arbitrary $m$), see also~\cite{GarGar}. 
In fact, these results have recently  been 
generalised in~\cite{CochSih}, which we present in the 
following slightly  less precise form.

\begin{lemma}
   \label{lem:4th Moment} For  arbitrary integers
$W$,  and $Z \le m$, the bound
$$
\sum_{\substack{\chi \in \varPhi_m\\ \chi \ne \chi_0}}
\left| \sum_{z = W+1}^{W+Z} \chi(z)\right|^4
\le   m^{1 + o(1)} Z^2
$$
holds.
\end{lemma}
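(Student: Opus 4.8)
The plan is to evaluate the fourth moment exactly by multiplicative orthogonality and then peel off the principal character. Write $I=\{W+1,\dots,W+Z\}$, $S_\chi=\sum_{z\in I}\chi(z)$ and $T=\#\{z\in I:\gcd(z,m)=1\}\le Z$. Expanding
\[
|S_\chi|^4=\sum_{x_1,x_2,x_3,x_4\in I}\chi\bigl(x_1x_2\,\overline{x_3x_4}\bigr),
\]
with inverses taken modulo $m$ (only quadruples coprime to $m$ contribute), and summing over $\chi\in\varPhi_m$ by~\eqref{eq:Ident mult char}, collapses everything to $\sum_{\chi\in\varPhi_m}|S_\chi|^4=\varphi(m)J$, where $J$ is the number of $(x_1,x_2,x_3,x_4)\in I^4$ with $\gcd(x_i,m)=1$ and $x_1x_2\equiv x_3x_4\pmod m$. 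Since $|S_{\chi_0}|=T$, this gives $\sum_{\chi\ne\chi_0}|S_\chi|^4=\varphi(m)J-T^4=\varphi(m)(J-T^4/\varphi(m))$, so, as $\varphi(m)\le m$, it suffices to prove the variance-type bound $J-T^4/\varphi(m)\ll m^{o(1)}Z^2$. Subtracting $\chi_0$ is essential: $J$ is itself of order $Z^4/m$, so the lazy estimate $\sum_{\chi\ne\chi_0}\le\sum_\chi=\varphi(m)J$ is hopeless once $Z$ approaches $m$.

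Since $\chi$ has period $m$ and $Z\le m$, the sum $S_\chi$ depends only on $W\bmod m$, so I may assume $0\le W<m$; then each $x_i<2m$, every product $x_ix_j<4m^2$, and the divisor bound yields $\tau(x_ix_j)\le m^{o(1)}$. I would then reformulate $J$ geometrically. Setting $\lambda\equiv x_1\overline{x_3}\pmod m$ turns $x_1x_2\equiv x_3x_4$ into $J=\sum_{\gcd(\lambda,m)=1}r(\lambda)^2$, where $r(\lambda)=\#(\Lambda_\lambda\cap I^2)$ counts the points of the determinant-$m$ lattice $\Lambda_\lambda=\{(x,y)\in\Z^2:y\equiv\lambda x\pmod m,\ \gcd(x,m)=1\}$ inside the square $I^2$. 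With $\overline r=T^2/\varphi(m)=\varphi(m)^{-1}\sum_\lambda r(\lambda)$ the mean of $r$, the main term is removed by the exact identity $J-T^4/\varphi(m)=\sum_\lambda(r(\lambda)-\overline r)^2$, and the whole task becomes bounding this variance. As a sanity check on the mechanism, the contribution of the genuine integer equalities $x_1x_2=x_3x_4$ is at once $\ll m^{o(1)}Z^2$: writing $d_I(P)=\#\{(a,b)\in I^2:ab=P\}$ we have $\sum_P d_I(P)=T^2\le Z^2$ and $d_I(P)\le\tau(P)\le m^{o(1)}$, so $\sum_P d_I(P)^2\le m^{o(1)}Z^2$.

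All lattices $\Lambda_\lambda$ share the determinant $m$, hence a common volume term, and lattice-point counting shows that $r(\lambda)$ deviates from $\overline r$ by $O(Z/\ell_1(\lambda))$ for the $\lambda$ whose lattice meets $I^2$ in at least two points, where $\ell_1(\lambda)$ is the length of the shortest nonzero vector of $\Lambda_\lambda$; the remaining $\lambda$ (box containing at most one lattice point) produce only boundary terms needing a separate, routine treatment. The difficulty then condenses into the short-vector count
\[
\#\{\lambda:\gcd(\lambda,m)=1,\ \ell_1(\lambda)\le L\}\ll m^{o(1)}L^2 .
\]
This is where the multiplicative structure is decisive: a short vector is a solution of $v\equiv\lambda u\pmod m$ with $0<\max(|u|,|v|)\le L$, and for fixed $(u,v)$ the number of admissible $\lambda$ is at most $\gcd(u,m)$; weighting the $\ll L^2$ pairs accordingly and using $\sum_{|u|\le L}\gcd(u,m)\le L\,\tau(m)\ll m^{o(1)}L$ gives the displayed estimate. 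Summing over the $O(\log m)$ dyadic scales $L\le(2m)^{1/2}$ yields $\sum_\lambda\ell_1(\lambda)^{-2}\ll m^{o(1)}$ and hence $J-T^4/\varphi(m)\ll m^{o(1)}Z^2$, as required. I expect the core difficulty to be this concentration estimate — equivalently, that the solutions of $x_1x_2\equiv x_3x_4$ never cluster beyond the level $m^{o(1)}Z^2$ — with the orthogonality reduction the only genuinely routine part and the common determinant of the $\Lambda_\lambda$ the structural fact that forces the $Z^4/m$ main term to cancel. For prime $m$ the divisor bound improves and the factor $m^{o(1)}$ becomes a power of $\log m$; note also that this bound is strictly stronger than anything obtainable by iterating the second moment~\eqref{eq:2nd Moment} or by inserting Lemma~\ref{lem:PVB}.
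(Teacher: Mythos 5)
The paper does not actually prove this lemma: it is imported verbatim from \cite{ACZ} (prime $m$), \cite{FrIw2} (arbitrary $m$ but $W=0$) and, in the general form stated, from \cite{CochSih}, so there is no in-paper argument to compare against. That said, your opening moves are exactly the standard ones in those references: orthogonality \eqref{eq:Ident mult char} turns the fourth moment into $\varphi(m)J-T^4$ with $J$ the number of solutions of $x_1x_2\equiv x_3x_4\pmod m$ in $I^4$ coprime to $m$, the identity $J-T^4/\varphi(m)=\sum_\lambda(r(\lambda)-\overline r)^2$ is correct, and your short-vector count $\#\{\lambda:\ell_1(\lambda)\le L\}\ll m^{o(1)}L^2$, hence $\sum_\lambda\ell_1(\lambda)^{-2}\ll m^{o(1)}$, is proved correctly and is indeed the engine of the whole estimate.

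The genuine gap is the sentence dismissing the $\lambda$ whose box contains at most one lattice point as ``routine boundary terms''. Any count of the form $r(\lambda)=Z^2\varphi(m)/m^2+O(Z/\ell_1(\lambda)+1)$ carries an unavoidable $O(1)$ per $\lambda$, and $\sum_\lambda O(1)^2=\varphi(m)$, which exceeds the target $m^{o(1)}Z^2$ throughout the range $Z\le m^{1/2-\delta}$ --- a range in which the lemma is still nontrivial (it beats both the trivial bound and the iterated second moment \eqref{eq:2nd Moment} there) and is used in the paper. So the variance decomposition as you wrote it fails for small $Z$; the repair is a different argument, not a boundary estimate: when $Z<\sqrt m$ the box $I^2$ has area less than $\det\Lambda_\lambda=m$, so any three lattice points in it are collinear, whence $r(\lambda)\le\sqrt2\,Z/\ell_1(\lambda)+1$ pointwise; combining this with $\sum_\lambda\ell_1(\lambda)^{-2}\ll m^{o(1)}$ and $\#\{\lambda:r(\lambda)\ge1\}\le\sum_\lambda r(\lambda)=T^2\le Z^2$ bounds $J$ itself by $m^{o(1)}Z^2$, and no cancellation of a main term is needed since $T^4/\varphi(m)\le Z^2\cdot m/\varphi(m)=m^{o(1)}Z^2$ in that range. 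A second, smaller omission: for composite $m$ the constraint $\gcd(x,m)=1$ makes $r(\lambda)$ not a pure lattice count, so you need M\"obius inversion over $d\mid m$ (cf.\ Lemma~\ref{lem:erat}), which multiplies the per-$\lambda$ errors by $\tau(m)=m^{o(1)}$ --- harmless when $Z\ge\sqrt m$, but one more reason the two regimes must be treated separately. With that case split made explicit the proof closes.
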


Note that many of the results below are based on previous estimates 
of~\cite{ACZ} (which require $m=p$ to be prime) 
and~\cite{FrIw2} (which apply to any $m$ but require $W = 0$). 
So now Lemma~\ref{lem:4th Moment} allows us  to drop these
restrictions.

For example, bounds of higher moments of multiplicative character sums, 
which in particular are based on 
combining Lemma~\ref{lem:PVB} and a previous version of 
Lemma~\ref{lem:4th Moment} with some
other arguments,  
have been given~\cite{CoZh}, and can now be generalised.

In~\cite{Gar3,Gar6}  a new argument has been introduced to 
this area, which is based on a 
bound of~\cite{Hux} on the number of large 
values of Dirichlet polynomials,
see also the original 
papers~\cite{HuxJut,Jut}  as well as~\cite[Chapter~9]{IwKow} for 
some other estimates for  Dirichlet polynomials. 
%The roots
%of this argument go back to~\cite{FrIw2}.
This line of research is extremely interesting
and definitely deserves more investigation. 

\subsection{Average Values of $L$-functions}
\label{sec:L-fun}

Many quantities related to the distribution 
of points on $\cH_{a,m}$ on average over $a$,
lead to studying various average values of $L$-functions 
$L(1, \chi)$
with multiplicative characters $\chi\in \varPhi_m$, 
see, for 
example,~\cite{LiuZha1,LiuZha2,LiuZha3,LiuZha4,LiuZha5,LuYi2,Zha6}. 
Sometimes such sums are weighted by character sums and 
are of independent interest.

For example, let $R_{a,m}(N)$ be the number of points
$(x,y) \in \cH_{a,m}$ with $1 \le x \le N$ and $1 \le y < m$,
and such that $x+y$ is odd.
It is shown in~\cite{LuYi2}, that for a prime $p$ the second 
moment of the differences $R_{a,p}(N) - N/2$, $a=1, \ldots, p-1$, 
can be expressed via the sums
\begin{eqnarray*}
\sigma_1(p,N) & = &\sum_{\substack{\chi \in \varPhi_p\\ \chi(-1) =-1}} \left|\sum_{n=1}^N
  (-1)^n \chi(n)\right|^2
|L(1, \chi)|^2, \\
\sigma_2(p,N) & = &\sum_{\substack{\chi \in \varPhi_p\\ \chi(-1) =-1}}\chi(2) \left|\sum_{n=1}^N
(-1)^n \chi(n)\right|^2 |L(1, \chi)|^2 .
\end{eqnarray*}
Using a combination of the methods of~\cite{Shp10,Zha7}, 
in~\cite{LuYi2} the asymptotic formulas 
\begin{eqnarray*}
\sigma_1(p,N) & = &\alpha pN + O\(N^2 p^{o(1)} + p (\log N)^2\), \\
\sigma_2(p,N) & = &\frac{\alpha }{2}pN + O\(N^2 p^{o(1)} + p (\log N)^2\).
\end{eqnarray*}
are  given, where
$$
\alpha = \frac{\pi^2}{12}\(1 + 
\frac{16}{9\zeta(3)} \sum_{k=1}^\infty \frac{1}{(2k+1)^2} 
\sum_{h=0}^k \frac{1}{2h+1}\)
$$
and $\zeta(s)$ is the Riemann zeta-function.  
Clearly, the above formulas are nontrivial for $p^\varepsilon < N \le 
p^{1-\varepsilon}$ for any fixed $\varepsilon>0$. On the
other hand, 
$$\sigma_1(p,p)=\sigma_2(p,p) = 0, 
$$ 
so there is  some kind of the ``phase-transition'' area when $N$ gets 
close to $p$ which would be interesting to understand  more.

\subsection{Theory of Uniform Distribution}
\label{sec:UD tools}

For a finite set $\cF \subseteq [0,1]^s$ of the $s$-dimensional
unit cube, we define its {\it discrepancy
with respect to a domain $\varXi \subseteq [0,1]^s$\/}
as
$$
\varDelta(\cF ,\varXi) = \left| \frac{ \#\{ \vec{f}\in\cF :\ \vec{f}\in
\varXi\} }{\#\cF} - \lambda(\varXi)\right|,
$$
where $\lambda$ is the Lebesgue measure on $ [0,1]^s$.

We now define  the {\it  discrepancy\/} of $\cF $ as
$$
D(\cF) = \sup_{\varPi  \subseteq [0,1]^s}  \varDelta(\cF ,\varPi) ,
$$
where the supremum is taken over all boxes $\varPi = [\alpha_1,
\beta_1) \times \ldots \times [\alpha_s, \beta_s) \subseteq [0,1]^s$.

A link between the discrepancy and exponential sums is provided by
the celebrated {\it Koksma--Sz\"usz inequality\/}, see~\cite[Theorem~1.21]{DrTi}.
However, for points of 
$\cH_{a,m}$, due to  the discrete structure of the problem, one can immediately
establish such a link directly  by the identity~\eqref{eq:Ident add char}.

For example, one can consider the points
$$
\(\frac{x}{m}, \frac{y}{m}\) \in [0,1]^2,  \qquad (x,y) \in \Hba,
$$
and apply the bound~\eqref{eq:Kloost bound} to estimate their
discrepancy,
which in turn is equivalent to studying points of
$\cH_{a,m}(\cX,\cY)$ where $\cX$ and $\cY$ are sets of consecutive integers.

Moreover, the {\it Koksma--Hlawka inequality\/}, see~\cite[Theorem~1.14]{DrTi},
allows us to estimate average values of various functions on the points
$(x,y) \in \cH_{a,m}$.

\begin{lemma}
\label{lem:K-H} For any continuous function $\psi(\vec{z})$ on the
unit cube $[0,1]^s$
and a finite set $\cF \subseteq [0,1]^s$  
of discrepancy  $D(\cF)$, 
the following bound holds:
$$
    \frac{1}{\# \cF } \sum_{\vec{f} \in \cF} \psi(\vec{f}) =
\int_{[0,1]^s} \psi(\vec{z})\, d \vec{z}
   + O\(D(\cF) \)
$$
where the implied constant depends only on $s$ and the function $\psi$.
\end{lemma}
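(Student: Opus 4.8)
The plan is to reduce the claim to the defining property of the discrepancy and then bootstrap from indicators of boxes to the given function $\psi$. The base case is immediate: if $\psi=\mathbf{1}_{\varPi}$ is the indicator of a box $\varPi\subseteq[0,1]^s$, then the left-hand side minus the right-hand side equals $\pm\varDelta(\cF,\varPi)$, which by the definition of $D(\cF)$ as a supremum over boxes is at most $D(\cF)$ in absolute value. By linearity, the analogous bound — now inflated by a factor equal to the number of boxes involved — holds for any finite linear combination of such indicators, that is, for any step function adapted to a finite grid.

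To pass from step functions to $\psi$ one must control the trade-off between the approximation error and the number of boxes. First I would partition $[0,1]^s$ into $k^s$ congruent subcubes of side $1/k$ and replace $\psi$ on each subcube by its value at a fixed corner, producing a step function $\phi$ with $\|\psi-\phi\|_\infty\le\omega_\psi(1/k)$, where $\omega_\psi$ is the modulus of continuity of $\psi$. The previous step bounds the error for $\phi$ by $\|\psi\|_\infty k^s D(\cF)$, while replacing $\psi$ by $\phi$ costs at most $2\omega_\psi(1/k)$ across the two averages. Optimising $k$ in $\|\psi\|_\infty k^s D(\cF)+2\omega_\psi(1/k)$ then produces a bound tending to $0$ with $D(\cF)$; for a Lipschitz $\psi$ this crude argument already yields $O\!\left(D(\cF)^{1/(s+1)}\right)$.

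The main obstacle is that the sharp rate $O(D(\cF))$ is a phenomenon of bounded variation rather than of mere continuity, so the naive step-function comparison above is not enough. To reach it I would instead invoke the Koksma--Hlawka inequality in the form of~\cite[Theorem~1.14]{DrTi}: assuming $\psi$ has bounded variation $V(\psi)$ in the sense of Hardy and Krause — which holds, in particular, for the continuously differentiable $\psi$ arising in our applications — iterated summation by parts expresses the difference between the empirical average and the integral as a sum, over the nonempty subsets $u\subseteq\{1,\dots,s\}$, of integrals of the local discrepancy function against the mixed differentials $d_u\psi$. Each such integral is bounded by the supremum of the discrepancy function times the corresponding component of $V(\psi)$, and that supremum is in turn controlled by $D(\cF)$. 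Collecting the $2^s-1$ terms gives the stated estimate with an implied constant $c_s\,V(\psi)$ depending only on $s$ and on $\psi$.
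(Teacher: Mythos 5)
The paper gives no argument for this lemma at all: it is stated as a quotation of the Koksma--Hlawka inequality, \cite[Theorem~1.14]{DrTi}, and your final paragraph invokes exactly that result (with a correct sketch of its proof by iterated summation by parts against the local discrepancy), so in the end you and the paper take the same route. Your preliminary observation is also correct and worth keeping: the step-function approximation only yields a rate governed by the modulus of continuity, and the stated rate $O(D(\cF))$ genuinely requires $\psi$ to have bounded variation in the sense of Hardy and Krause rather than mere continuity --- so the lemma as printed is slightly overstated, harmlessly so since the functions $\psi$ used in the survey (such as $(x-y)^{2\nu}$ leading to~\eqref{eq:Dist Moments}) are smooth.
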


To study $\cH_{a,m} \cap \cW$  for more general sets $\cW$ some additional
tools are required from the theory of uniform distribution.

As usual, we define the  distance between a vector $\vec{u} \in [0,1]^s$
and a set $\varXi\subseteq [0,1]^s $  by
$$
\dist(\vec{u},\varXi) = \inf_{\vec{w} \in\varXi}
\|\vec{u} - \vec{w}\|,
$$
where  $\|\vec{v}\|$ denotes the Euclidean norm of $\vec{v}$. Given
$\varepsilon >0$ and a domain  $\varXi \subseteq [0,1]^s $ we define
the  sets
$$
\varXi_\varepsilon^{+} = \left\{ \vec{u} \in  [0,1]^s \backslash
\varXi \ : \ \dist(\vec{u},\varXi) < \varepsilon \right\}
$$
and
$$
\varXi_\varepsilon^{-} = \left\{ \vec{u} \in \varXi \ : \
\dist(\vec{u},[0,1]^s \backslash \varXi )  < \varepsilon  \right\} .
$$

Let $h(\varepsilon)$ be an arbitrary increasing function defined for
$\varepsilon >0$ and such that
$$
\lim_{ \varepsilon \to 0}h(\varepsilon) = 0.
$$
As in~\cite{Lac,NiWi}, we define the
class $\cS_h$ of  domains  $\varXi \subseteq [0,1]^s $ for which
$$
\lambda\(\varXi_\varepsilon^{+} \)\le h(\varepsilon)
\qquad \mbox{and}
\qquad
\lambda\(\varXi_\varepsilon^{-} \)\le h(\varepsilon) 
$$
for any $\varepsilon >0$.

A relation  between $D(\cF)$ and $\varDelta(\cF ,\varXi)$
for $\varXi \in \cS_h$ is given by the following inequality of~\cite{Lac}
(see also~\cite{NiWi}).

\begin{lemma}
   \label{lem:LNW bound} For any domain  $\varXi \in \cS_h$, we have
$$
\varDelta(\cF ,\varXi) \ll h\(s^{1/2} D(\cF )^{1/s}\)  .
$$
\end{lemma}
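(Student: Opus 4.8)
The plan is to sandwich $\varXi$ between two unions of cells of a fine grid and to reduce everything to the box discrepancy $D(\cF)$. Write $\Delta = D(\cF)$, which we may assume to be small, put $n = \lceil \Delta^{-1/s}\rceil$, and partition $[0,1]^s$ into the $n^s$ half-open cubes of side $1/n$; each such cube $Q$ has $\lambda(Q) \asymp_s \Delta$ and diameter at most $s^{1/2}\Delta^{1/s} =: \varepsilon$. Call $Q$ \emph{interior} if $Q \subseteq \varXi$, \emph{exterior} if $Q \cap \varXi = \emptyset$, and \emph{boundary} otherwise, and let $\varXi^-$ and $B$ be the unions of the interior and of the boundary cubes respectively. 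Then $\varXi^- \subseteq \varXi \subseteq \varXi^- \cup B$ with $B$ disjoint from $\varXi^-$, so that
\[
\varDelta(\cF,\varXi) \le \varDelta(\cF,\varXi^-) + \max\left\{ \frac{\#(\cF \cap B)}{\#\cF},\ \lambda(B)\right\},
\]
and it suffices to bound the boundary term and the ``staircase'' term $\varDelta(\cF,\varXi^-)$ each by $O_s(h(\varepsilon))$.

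The boundary term is straightforward. Every boundary cube meets both $\varXi$ and its complement and has diameter at most $\varepsilon$, so $Q \cap \varXi \subseteq \varXi_\varepsilon^-$ and $Q \setminus \varXi \subseteq \varXi_\varepsilon^+$; hence $B \subseteq \varXi_\varepsilon^+ \cup \varXi_\varepsilon^-$ and the hypothesis $\varXi \in \cS_h$ yields $\lambda(B) \le 2h(\varepsilon)$. As the boundary cubes are disjoint of volume $\gg_s \Delta$, their number is $N_B \ll_s h(\varepsilon)/\Delta$. Since each cube is a box, $\#(\cF\cap Q)/\#\cF \le \lambda(Q) + \Delta$, and summing over the boundary cubes gives $\#(\cF\cap B)/\#\cF \le \lambda(B) + N_B\Delta \ll_s h(\varepsilon)$.

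The staircase term is the main obstacle: $\varXi^-$ is a union of grid cubes but not a single box, and a cube-by-cube estimate of its discrepancy only produces the useless bound (number of interior cubes)$\times\Delta = O(1)$. Instead I would expand the cube-constant indicator by inclusion--exclusion into corner boxes anchored at the origin,
\[
\mathbf{1}_{\varXi^-} = \sum_{\vec k} \gamma_{\vec k}\, \mathbf{1}_{[\vec 0,\, \vec k/n)},
\]
where $\vec k$ runs over grid nodes and $\gamma_{\vec k}$ is the $s$-fold alternating difference of the interior/non-interior indicator over the $2^s$ cubes meeting $\vec k/n$. Each corner box is a box, so $\varDelta(\cF,\varXi^-) \le \Delta \sum_{\vec k} |\gamma_{\vec k}|$. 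Now $\gamma_{\vec k} = 0$ unless these $2^s$ cubes are of mixed type --- that is, unless $\vec k/n$ lies on the interface between $\varXi^-$ and its complement --- while $|\gamma_{\vec k}| \le 2^s$; hence $\sum_{\vec k}|\gamma_{\vec k}| \ll_s F$, where $F$ is the number of grid faces separating an interior cube from a non-interior one. The crux is then to confine $F$ to the boundary budget: each separating face forces a slab of volume $\gg_s \Delta$ lying in $\varXi^-$ within distance $\varepsilon$ of the complement of $\varXi$, hence inside $\varXi_\varepsilon^-$, and, accounting for the bounded multiplicity with which a cube is reused, this gives $F\Delta \ll_s \lambda(\varXi_\varepsilon^-) + \lambda(\varXi_\varepsilon^+) \le 2h(\varepsilon)$. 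Therefore $\varDelta(\cF,\varXi^-) \ll_s \Delta \cdot h(\varepsilon)/\Delta = h(\varepsilon)$, and combining with the boundary estimate yields $\varDelta(\cF,\varXi) \ll_s h(\varepsilon) = h(s^{1/2}D(\cF)^{1/s})$. I expect this last confinement of $F$ (equivalently of $\sum_{\vec k}|\gamma_{\vec k}|$) to the $\varepsilon$-layer to be the hardest point, since it is where the regularity $\varXi \in \cS_h$ is used in full and where the factor $s^{1/2}$ inside $h$ is produced by the cube diameter.
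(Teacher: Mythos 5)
The paper does not actually prove this lemma: it quotes it from~\cite{Lac} (see also~\cite{NiWi}), so there is no internal proof to compare against. Your argument is, in substance, the standard one behind such estimates --- a grid whose cells have volume comparable to $D(\cF)$, inner/outer cellular approximation of $\varXi$, the boundary cells charged to $\lambda(\varXi_\varepsilon^{+}\cup\varXi_\varepsilon^{-})\le 2h(\varepsilon)$, and the discrepancy of the inner approximation controlled by an inclusion--exclusion into corner boxes whose total variation is governed by the size of the interface --- and it is correct, including the step you single out as the crux: every interior cell with a non-interior face-neighbour lies in $\varXi_\varepsilon^{-}$, these cells are disjoint of volume $\gg_s D(\cF)$, so the number of separating faces is $\ll_s h(\varepsilon)/D(\cF)$. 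Three small points need patching. First, the diameter of the union of two face-adjacent cells is $\sqrt{s+3}/n$ rather than $\sqrt{s}/n$, so with $n=\lceil D(\cF)^{-1/s}\rceil$ your slabs only land in $\varXi_{\varepsilon'}^{-}$ for a slightly larger $\varepsilon'$; taking $n=\lceil 3D(\cF)^{-1/s}\rceil$ restores the stated $\varepsilon=s^{1/2}D(\cF)^{1/s}$ (for the paper's application $h$ is linear, so this is cosmetic). Second, in the corner-box expansion the coefficients $\gamma_{\vec k}$ at nodes on $\partial[0,1]^s$ are lower-order alternating differences; these are still supported on the interface except at the $2^s$ corners of the unit cube, which contribute an unconditional $O_s(D(\cF))$ to $\varDelta(\cF,\varXi^-)$. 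This term is harmless --- if $\varXi^-$ is neither empty nor all of $[0,1]^s$ there is at least one separating face, whence $h(\varepsilon)\ge\lambda(\varXi_\varepsilon^{-})\gg_s D(\cF)$, and in the two degenerate cases $\varDelta(\cF,\varXi^-)=0$ outright --- but it should be said, since nothing in the hypotheses forces $h(\varepsilon)\gg D(\cF)$ a priori. Third, $\varXi_\varepsilon^{\pm}$ are defined with strict inequalities, so the containments $Q\cap\varXi\subseteq\varXi_\varepsilon^{-}$ and $Q\setminus\varXi\subseteq\varXi_\varepsilon^{+}$ require the relevant distances to be $<\varepsilon$, which the finer grid also guarantees.
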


Finally,  the following bound, which is a special case
of a more general result of H.~Weyl~\cite{Weyl}
shows that if  $\varXi$ has a piecewise smooth boundary
then $\varXi \in \cS_h$ for some linear function
$h(\varepsilon) = C\varepsilon$.

\begin{lemma}
   \label{lem:Weyl bound} For any domain  $\varXi \in [0,1]^s$ with
a piecewise smooth boundary, we
have
$$
\lambda\(\varXi_\varepsilon^{\pm }\) = O(\varepsilon).
$$
\end{lemma}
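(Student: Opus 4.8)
The plan is to reduce the two one-sided neighborhoods to a single two-sided tube around the boundary $\partial\varXi$ and then bound the volume of that tube by its $(s-1)$-dimensional surface area times $\varepsilon$. First I would observe that if $\vec{u}\in\varXi_\varepsilon^{+}$ then $\vec{u}\notin\varXi$ and there is a point $\vec{w}\in\varXi$ with $\|\vec{u}-\vec{w}\|<\varepsilon$; the segment joining $\vec{u}$ to $\vec{w}$ runs from the complement into $\varXi$ and must therefore meet $\partial\varXi$ at some $\vec{z}$ with $\|\vec{u}-\vec{z}\|<\varepsilon$. The same argument applies to $\varXi_\varepsilon^{-}$ with the roles of $\varXi$ and its complement interchanged. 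Writing $T_\varepsilon(S)=\{\vec{u}\in[0,1]^s:\dist(\vec{u},S)<\varepsilon\}$ for the $\varepsilon$-neighborhood of a set $S$, this gives
$$
\varXi_\varepsilon^{\pm}\subseteq T_\varepsilon(\partial\varXi),
$$
so it suffices to prove $\lambda\(T_\varepsilon(\partial\varXi)\)=O(\varepsilon)$.

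Next I would use the piecewise smoothness to organise the boundary as a finite union
$$
\partial\varXi=\Sigma_1\cup\dots\cup\Sigma_k\cup E,
$$
where each $\Sigma_j$ is a compact $C^1$ hypersurface patch, say the image of a Lipschitz map $\phi_j\colon K_j\to[0,1]^s$ on a compact $K_j\subset\R^{s-1}$, and $E$ is the lower-dimensional singular set of edges and corners along which the patches meet, of Hausdorff dimension at most $s-2$. Because $\varXi$ lives in the bounded cube, each $\Sigma_j$ carries finite $(s-1)$-dimensional measure $\mathcal{H}^{s-1}(\Sigma_j)$, and $E$ has finite $(s-2)$-dimensional measure.

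For each smooth patch I would bound the tube volume in either of two equivalent ways. Weyl's tube formula (the general result being specialised here) gives, in codimension one, $\lambda\(T_\varepsilon(\Sigma_j)\)=2\varepsilon\,\mathcal{H}^{s-1}(\Sigma_j)+O(\varepsilon^2)$. Alternatively, a direct covering argument suffices: finiteness of $\mathcal{H}^{s-1}(\Sigma_j)$ lets one cover $\Sigma_j$ by $O\(\varepsilon^{-(s-1)}\)$ Euclidean balls of radius $\varepsilon$, hence $T_\varepsilon(\Sigma_j)$ by the concentric balls of radius $2\varepsilon$, of total volume $O\(\varepsilon^{-(s-1)}\)\cdot O\(\varepsilon^{s}\)=O(\varepsilon)$. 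The same covering estimate applied to $E$ in dimension $s-2$ yields $\lambda\(T_\varepsilon(E)\)=O\(\varepsilon^{-(s-2)}\)\cdot O\(\varepsilon^{s}\)=O(\varepsilon^2)$, which is negligible. Summing the finitely many contributions,
$$
\lambda\(T_\varepsilon(\partial\varXi)\)\le\sum_{j=1}^k\lambda\(T_\varepsilon(\Sigma_j)\)+\lambda\(T_\varepsilon(E)\)=O(\varepsilon),
$$
with an implied constant depending only on $s$ and on the fixed geometry of $\varXi$, namely the number of patches, their Lipschitz constants and their surface areas.

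The main obstacle is making the phrase ``piecewise smooth boundary'' precise enough to guarantee this decomposition: one must exhibit $\partial\varXi$ as a finite union of patches each of finite $(s-1)$-measure admitting a uniform covering bound $O\(\varepsilon^{-(s-1)}\)$, and verify that the strata where patches meet genuinely have dimension at most $s-2$, so that they contribute only $O(\varepsilon^2)$. Once this stratification and the uniform covering constants are available, the volume estimate and hence the lemma follow immediately.
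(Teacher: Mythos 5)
Your argument is correct in substance, but note first that the paper does not actually prove this lemma: it is quoted without proof as a special case of Weyl's general tube-volume theorem~\cite{Weyl}. You are therefore supplying a self-contained argument where the survey supplies a citation, and your route is genuinely more elementary: Weyl's theorem gives, for a compact $C^2$ hypersurface, an exact polynomial expansion of the tube volume whose leading term is $2\varepsilon$ times the surface area, whereas the lemma only needs the one-sided bound $O(\varepsilon)$, and your covering argument delivers that under the weaker hypothesis that each boundary piece is a Lipschitz image of a compact subset of $\R^{s-1}$. The reduction $\varXi_\varepsilon^{\pm}\subseteq T_\varepsilon(\partial\varXi)$ via the connectedness of the segment is sound (the segment stays in the convex cube, so it must cross the relative boundary, which lies in $\partial\varXi$), and counting $O\(\varepsilon^{-(s-1)}\)$ balls of radius $\varepsilon$, each inflated to a concentric ball of radius $2\varepsilon$ and volume $O(\varepsilon^{s})$, gives $O(\varepsilon)$ with an implied constant depending on $\varXi$, which is all that is claimed. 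One caution: finiteness of $\mathcal{H}^{s-1}(\Sigma_j)$ alone does \emph{not} produce a covering by $O\(\varepsilon^{-(s-1)}\)$ balls of radius $\varepsilon$, since Hausdorff and upper Minkowski dimension can differ; what produces it is exactly the Lipschitz parametrisation $\phi_j\colon K_j\to[0,1]^s$ you introduced, so the covering count should be justified from that and not from the measure. With that phrasing corrected, and with ``piecewise smooth'' read as ``finite union of such compact Lipschitz patches'' (the singular strata where patches meet being lower-dimensional, hence contributing $O(\varepsilon^2)$, or simply absorbed into the closed patches), your proof is complete and in fact applies in slightly greater generality than the $C^2$ setting in which Weyl's formula is usually stated.
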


To use the above results for the  study of points on $\cH_{a,m}$, one usually
considers points
\begin{equation}
\label{eq:Map}
\(\frac{x}{m},\frac{y}{m}\) \in [0,1]^2, \qquad  (x,y) \in \cH_{a,m},
\ 1 \le x,y \le m.
\end{equation}

\subsection{Arithmetic Functions, Divisors, Prime
Numbers}
\label{sec:ANT}

Certainly some elementary bounds such
as
$$
\varphi(k) \gg \frac{k}{\log \log (k+2)}
$$
and 
\begin{equation}
\label{eq:tau}
2^{\omega(k)} \le \tau(k) \le \exp\(\(\log 2+ o(1)\)\frac{\log k}{\log \log k}\),
\end{equation}
see~\cite[Section~I.5.2 and~I.5.4]{Ten}, 
appear at various stages of the proofs of relevant results.

The following well-known consequence of the {\it sieve of
Eratosthenes\/}  (essentially
of the inclusion-exclusion principle expressed
via the M\"obius function) is very often needed to estimate
the main terms of various asymptotic formulas (see, for
example,~\cite{Shp6,Shp11}).

\begin{lemma}
\label{lem:erat}
For any integers $m,  Z \ge 1$ and $W \ge 0$,
$$ \sum_{\substack{z =W+1 \\ \gcd(z, m)=1}}^{W+Z} 1
= \frac{\varphi (m) }{m}Z + O(2^{\omega(m)}).
$$
\end{lemma}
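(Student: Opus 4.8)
The plan is to detect the coprimality condition by the M\"obius identity $\sum_{d\mid \gcd(z,m)}\mu(d)$, which equals $1$ when $\gcd(z,m)=1$ and vanishes otherwise; this is precisely the inclusion--exclusion over the prime divisors of $m$ hinted at in the statement. Substituting it into the sum and interchanging the (finite) summations over $z$ and over divisors $d$ of $m$, and noting that $d\mid\gcd(z,m)$ amounts to $d\mid m$ together with $d\mid z$, I would obtain
$$
\sum_{\substack{z=W+1 \\ \gcd(z,m)=1}}^{W+Z} 1 = \sum_{d\mid m}\mu(d)\sum_{\substack{z=W+1 \\ d\mid z}}^{W+Z} 1 .
$$

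First I would evaluate the inner sum. The number of multiples of $d$ in the interval $(W,W+Z]$ equals $\fl{(W+Z)/d}-\fl{W/d}$, which differs from $Z/d$ by at most $1$; hence it is $Z/d + O(1)$. Inserting this back gives
$$
\sum_{\substack{z=W+1 \\ \gcd(z,m)=1}}^{W+Z} 1 = Z\sum_{d\mid m}\frac{\mu(d)}{d} + O\(\sum_{d\mid m}|\mu(d)|\) .
$$

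Next I would identify the two resulting sums. The first one factors as an Euler product, $\sum_{d\mid m}\mu(d)/d=\prod_{p\mid m}(1-1/p)=\varphi(m)/m$, producing the claimed main term $\varphi(m)Z/m$. For the error term, $\sum_{d\mid m}|\mu(d)|$ counts exactly the squarefree divisors of $m$; since these are in bijection with the subsets of the set of $\omega(m)$ distinct prime factors of $m$, the sum equals $2^{\omega(m)}$, which yields the stated bound.

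The argument presents no genuine obstacle; the only point requiring a little care is the error-term bookkeeping. One must ensure that the accumulated $O(1)$ rounding errors are summed only over those $d$ with $\mu(d)\ne 0$---that is, over the $2^{\omega(m)}$ squarefree divisors---rather than over all $\tau(m)$ divisors of $m$. It is exactly the vanishing of $\mu$ on non-squarefree $d$ that keeps the error as small as $O(2^{\omega(m)})$ rather than $O(\tau(m))$.
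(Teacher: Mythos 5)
Your proof is correct and is exactly the argument the paper has in mind: the lemma is stated there without proof as a "well-known consequence of the sieve of Eratosthenes (essentially of the inclusion--exclusion principle expressed via the M\"obius function)", which is precisely your M\"obius detection of the coprimality condition. Your remark that the error is controlled by the $2^{\omega(m)}$ squarefree divisors rather than all $\tau(m)$ divisors is the right bookkeeping and matches the stated bound.
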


For an infinite monotonically increasing sequence of positive integers
$\cA=\(a_n\)_{n=1}^\infty$, we define
$$
H(x,y,z;\cA) = \#\{ n\le x \ :\  \exists \, d|a_n \text{ with } y<d\le
z\}. $$
For $\cA=\N$, the set of natural numbers,
   the order of magnitude of $H(x,y,z;\N)$ for all
$x,y,z$ has been determined in~\cite{Ford}, see also~\cite{HalTen}.
   Also in~\cite{Ford}, one can find upper bounds for $H(x,y,z;\cP_b)$ of
the expected order of
     magnitude, where $\cP_b=\{ p+b \ :\  p\text{ prime}\}$ is a set of
so-called shifted
     primes.
However, for the problem of studying  $\cH_{a,m}$, we need
analogous
     results where $n$ is restricted to an arithmetic progression.
More precisely, let us define the sequences
$$ \cT_k=\{ mk-1\ :\  m\in \N\} \mand\cU_k = \{
pk-1 \ :\  p\text{ prime}\}. $$
It has been shown in~\cite{FKSY} that the arguments of~\cite{Ford}
imply the following estimates.

It is usual that in questions of this kind, the constant
\begin{eqnarray}
\label{eq:kappa}
\kappa = 1 - \frac{1+ \log \log 2}{\log 2}
= 0.086071\ldots \,.
\end{eqnarray}
plays an important role, see also~\cite{HalTen}.

\begin{lemma}
\label{lem:div int}
Uniformly for $100 \le y\le x^{0.51}$, $1.1y\le z\le y^{1.1}$, $1\le
k\le \log x$, we have
\begin{eqnarray*}
H(x,y,z;\cT_k) & \ll & x \frac{k}{\varphi(k)} u^\kappa (\log
(1/u))^{-3/2}, \\
H(x,y,z;\cU_k) & \ll & x \frac{k}{\varphi(k)} u^\kappa (\log
(1/u))^{-3/2},
\end{eqnarray*}
where $z=y^{1+u}$.
\end{lemma}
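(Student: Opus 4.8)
The plan is to run the upper-bound half of Ford's theorem on $H(x,y,z;\N)$ (see~\cite{Ford}), which in the range $1.1y\le z\le y^{1.1}$ already yields $H(x,y,z;\N)\ll x\,u^\kappa(\log(1/u))^{-3/2}$, and to transport it to the two sequences $\cT_k$ and $\cU_k$ while tracking the $k$-dependence. The first thing to note is that the trivial divisor bound $H(x,y,z;\cA)\le\sum_{y<d\le z}\#\{n\le x:d\mid a_n\}$ only gives a saving of $u\log y$, not $u^\kappa$; the whole point of Ford's machinery is a refined combinatorial analysis of the prime factorization of $a_n$ that localises the putative divisor $d\in(y,z]$ as a sub-product of prime-power factors and extracts the genuine exponent $\kappa$ from the distribution of such sub-products in the short window $(\log y,\log z]$. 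I would keep that core analysis essentially verbatim and modify only the two places where the arithmetic of the sequence enters: the local divisibility densities, and (for $\cU_k$) the passage from integers to primes.

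For $\cT_k$ the terms are $a_n=nk-1$, so every $a_n$ is coprime to $k$ and hence so is every divisor that can occur; moreover $\#\{n\le x:d\mid nk-1\}=x/d+O(1)$ precisely when $\gcd(d,k)=1$ and is $0$ otherwise, since $nk\equiv1\pmod d$ is then uniquely solvable modulo $d$. Feeding these densities into Ford's argument, the only change relative to $\N$ is that every building block dividing a candidate divisor is forced to be coprime to $k$. This restriction removes the Euler factors at the primes $\ell\mid k$ from the relevant generating products, and since each such factor is bounded below by $1-1/\ell$, the net multiplicative loss is at most $\prod_{\ell\mid k}(1-1/\ell)^{-1}=k/\varphi(k)$. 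As we only seek an upper bound, inserting this factor is harmless, and the accumulated $O(1)$ error from the $x/d+O(1)$ counts stays negligible uniformly for $k\le\log x$.

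For $\cU_k$ the terms are $a_n=p_nk-1$, so $H(x,y,z;\cU_k)$ counts primes $p\le p_x\asymp x\log x$ with $pk-1$ having a divisor in $(y,z]$. Here $d\mid pk-1$ forces $p\equiv k^{-1}\pmod d$ (again needing $\gcd(d,k)=1$), and the Brun--Titchmarsh inequality gives $\#\{p\le p_x:p\equiv k^{-1}\ (\mathrm{mod}\ d)\}\ll p_x/(\varphi(d)\log(p_x/d))\ll x/\varphi(d)$, valid uniformly for $d\le z\le y^{1.1}\le x^{0.561}$ (comfortably inside the Brun--Titchmarsh range), the extra $\log$ cancelling against $p_x\asymp x\log x$. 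Thus the integer density $1/d$ is replaced throughout by $1/\varphi(d)=d^{-1}\prod_{\ell\mid d}(1-1/\ell)^{-1}$; since $d$ is coprime to $k$ and built from large primes, these correcting factors are bounded on average and leave the exponent $\kappa$ and the secondary factor $(\log(1/u))^{-3/2}$ intact, while the same coprimality-to-$k$ constraint again contributes the buffer $k/\varphi(k)$.

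The genuinely hard part is not either modification but Ford's core estimate itself: the determination that the correct exponent is $\kappa$ and the correct secondary factor is $(\log(1/u))^{-3/2}$ rests on delicate iterated sums over the prime factorisation and on controlling how often a random admissible sub-product lands in a window of relative length $u$ on the logarithmic scale, and this is what one is importing wholesale. The points that genuinely need checking in the transport are uniformity: that Ford's estimates hold uniformly in $u$ across the whole range $1.1y\le z\le y^{1.1}$, that the error terms survive uniformly for $1\le k\le\log x$, and that Brun--Titchmarsh is applied only to moduli $d$ within its admissible range. Once these are verified, the two bounds follow with the common factor $k/\varphi(k)$; this is precisely the verification carried out in~\cite{FKSY}, and further background on the constant $\kappa$ and on $H(x,y,z;\N)$ may be found in~\cite{HalTen,Ford}.
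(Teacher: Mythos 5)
The paper gives no proof of this lemma at all: it is quoted from~\cite{FKSY}, where it is obtained by running the upper-bound arguments of~\cite{Ford} with the local divisor densities adjusted for the progressions $nk-1$ and $pk-1$, and your sketch --- importing Ford's core machinery, replacing the density $1/d$ by the progression count $x/d+O(1)$ for $\cT_k$ and by a Brun--Titchmarsh bound $\ll x/\varphi(d)$ for $\cU_k$, and absorbing the coprimality-to-$k$ restriction into the harmless factor $k/\varphi(k)\ge 1$ --- is precisely that intended derivation. Your proposal is therefore essentially the same route as the paper's (cited) one, with the understood caveat, which you state explicitly and which applies equally to the source, that the genuinely hard content (the exponent $\kappa$ and the factor $(\log(1/u))^{-3/2}$) is imported wholesale from~\cite{Ford} rather than reproved.
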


A certain  result of~\cite{FKSY} relies on  the existence
of infinitely many  primes $p$ with a prescribed structure of
divisors of $p-1$, which is done using a very deep result 
of~\cite{BFI} concerning the {\it Bombieri-Vinogradov
theorem\/}. 

For an integer $k\ge 1$ we write
$$
T(k) =  \max_{i=1,\ldots,\tau(k)-1} \frac{d_{i+1}}{d_i}, 
$$
where $1= d_1<\ldots<d_{\tau(k)}=k$ are the positive divisors of $k$.

By~\cite[Theorem~1]{Saias}, we have:

\begin{lemma}
\label{lem:div dens}
Uniformly in  $z \ge t\ge 2$,
$$
\frac{z \log t}{\log z} \gg
\# \left\{k \le z \ : \  T(k) \le t \right\} \gg
\frac{z \log t}{\log z}.
$$
\end{lemma}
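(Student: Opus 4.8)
The plan is to work with the combinatorial characterisation of the condition $T(k)\le t$ in terms of the prime factorisation of $k$. Writing the prime factors of $k$ with multiplicity as $q_1\le q_2\le \ldots\le q_r$, I would first establish that
$$
T(k)=\max_{0\le i<r}\frac{q_{i+1}}{q_1q_2\cdots q_i},
$$
with the convention that the empty product equals $1$. This follows by locating the largest multiplicative gaps between consecutive divisors: the divisor $q_1\cdots q_i$ is the largest divisor built from the $i$ smallest prime factors, and the next divisor exceeds it by exactly the factor recorded above. Consequently, $T(k)\le t$ (call such $k$ \emph{$t$-dense}) is equivalent to the chain $q_{i+1}\le t\,q_1\cdots q_i$ for all $i$, in particular $q_1\le t$. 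Two consequences will drive everything: if $k$ is $t$-dense and $p$ is its largest prime factor, then $q_r\le t(k/q_r)$ gives $p^2\le tk$, so $p\le\sqrt{tk}$; and $k$ is $t$-dense with largest prime factor $p$ precisely when $k=mp$ with $m$ being $t$-dense, $P^+(m)\le p$ and $p\le tm$, where $P^+(m)$ denotes the largest prime factor of $m$.

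Writing $\cD(z,t)=\#\{k\le z:\ T(k)\le t\}$, the second consequence yields, without double counting, the recursion
$$
\cD(z,t)=1+\sum_{\substack{m\ t\text{-dense}}}\ \#\bigl\{p\ \text{prime}:\ P^+(m)\le p\le\min(tm,z/m)\bigr\},
$$
the leading $1$ accounting for $k=1$. For the lower bound I would restrict the outer sum to $t$-dense $m\le\sqrt{z/t}$, for which $tm\le\sqrt{tz}\le z/m$, so that the inner constraint reduces to $P^+(m)\le p\le tm$; since $P^+(m)\le\sqrt{tm}\le tm/2$, Chebyshev's estimate supplies $\gg tm/\log z$ admissible primes for each such $m$. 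Summing gives $\cD(z,t)\gg (t/\log z)\sum_{m\le\sqrt{z/t},\ t\text{-dense}}m$, and I would close this by induction on $z$ (equivalently on $r$), using the inductive lower bound to control the sum $\sum m$ and checking that the constants reproduce $z\log t/\log z$. For the upper bound I would argue dually: any $t$-dense $k\le z$ with $k>\sqrt z$ has, by density, a divisor $d\in(\sqrt z/t,\sqrt z]$, hence a factorisation $k=de$ with $d\le\sqrt z$ and $e<t\sqrt z$; feeding the resulting prime-counting bound back into the recursion and inducting, I would compare $\cD(z,t)$ against the candidate $z\log t/\log z$ and verify it satisfies the same recursive inequality up to an absolute constant.

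The substantive difficulty is uniformity across the whole range $2\le t\le z$, and it concentrates in two places. First, there is a genuine transition at $m\approx\sqrt{z/t}$, where the two ceilings $tm$ and $z/m$ in the inner count exchange roles; the contribution of $m$ beyond this point must be estimated separately and shown not to inflate the bound. Second, and more delicate, is making the induction self-sustaining with explicit absolute constants: the prime sums that arise, weighted by the recursion and of the shape $\sum_{p}(\log z)^{-1}$, must telescope so that the factor $\log t$ is produced exactly once and the factor $1/\log z$ survives, with the Chebyshev and Mertens error terms absorbed rather than accumulating over the $O(\log z/\log 2)$ levels of recursion. The boundary regimes $t$ near $2$ and $t$ near $z$, where $\cD(z,t)$ degenerates to order $z/\log z$ and to order $z$ respectively, should be verified directly to anchor the induction. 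I expect this constant-tracking in the recursion, rather than any single estimate, to be the main obstacle.
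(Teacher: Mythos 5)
A preliminary remark: the paper does not prove this lemma at all --- it is quoted verbatim from~\cite[Theorem~1]{Saias}, and determining the order of magnitude of the number of integers with $t$-dense divisors is a substantial theorem in its own right, building on earlier sieve work of Tenenbaum. Your reduction of $T(k)\le t$ to the chain $q_{j+1}\le t\,q_1\cdots q_j$ over the sorted prime factors, and the resulting bijective recursion over $k=mp$ with $m$ $t$-dense and $P^+(m)\le p\le\min(tm,z/m)$, are both correct and are indeed the functional equation on which the real proof rests. The gap is that the induction you outline does not close, and the loss is a factor of $\log$, not a constant. Write $\cD(z,t)$ for the count and $M=\sqrt{z/t}$. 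In the lower bound you keep only $m\le M$ and count $\gg tm/\log z$ primes for each; since $\sum_{m\le M,\,T(m)\le t}m\ll M\,\cD(M,t)\ll M^{2}\log t/\log M$ in any case, this restricted range contributes at most
$$
\frac{t}{\log z}\cdot\frac{M^{2}\log t}{\log M}\asymp\frac{z\log t}{\log z\,\log(z/t)},
$$
which falls short of the target $z\log t/\log z$ by a factor $\asymp\log(z/t)$, unbounded as soon as $z/t\to\infty$. This is not a matter of tracking constants: the dominant contribution to $\cD(z,t)$ genuinely comes from the regime $m>\sqrt{z/t}$ that you defer, where $p$ is confined to $[P^+(m),z/m]$ and one must control the \emph{joint} condition ``$T(m)\le t$ and $P^+(m)\le y$''. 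A one-parameter induction on $\cD(\cdot,t)$ cannot see this; one is forced to introduce and estimate the two-parameter count $\#\{m\le x:\ T(m)\le t,\ P^+(m)\le y\}$, and that is precisely where the difficulty of Saias's proof is concentrated.

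The upper bound has the same defect. Every $t$-dense $k\in(\sqrt z,z]$ does have a divisor in $(\sqrt z/t,\sqrt z]$, but by Ford's theorem~\cite{Ford} the number of $k\le z$ possessing such a divisor is of order $z\,u^{\kappa}(\log(1/u))^{-3/2}$ with $u\asymp\log t/\log z$ and $\kappa<1$ as in~\eqref{eq:kappa}; since $u^{\kappa}\gg u$ this exceeds $z\log t/\log z$ by an unbounded factor, so the containment is intrinsically too lossy --- and the trivial bound $\sum_{d}z/d\asymp z\log t$ that your sketch would actually use is worse still. Recovering the factor $1/\log z$ requires exploiting the arithmetic structure forced on the cofactor $k/d$ together with a sieve, which again leads back to the two-parameter quantity above. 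As it stands your argument yields only $z\log t/(\log z\,\log(2z/t))\ll\cD(z,t)\ll z\log t$. Since the paper itself simply cites~\cite{Saias}, the right course here is to do the same rather than to attempt a self-contained proof.
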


Finally, we remark, that several interesting results about the 
distribution of points on  $\cH_{a,m}(\cX,\cY)$ on average over $a$,
for some
special sets $\cX$ and $\cY$, such as intervals 
$$
\cX = \cY = \{z \ : \ 1 \le z \le m/2\},
$$ 
are based on various asymptotic
formulas for average values of Dirichlet $L$-functions, see, for
example,~\cite{LiuZha1,XuZha1,Zha6}. 

\subsection{Integer Points on Algebraic Surfaces and Convex Polygons}
\label{sec:Dioph}

The result of~\cite[Theorem~2]{KonShp} is based on the following 
bound on the number of solutions to a bivariate quadratic 
Diophantine equation, which follows from~\cite[Lemma~3.5]{VaWo}
combined with~\cite[Proposition~1]{CillGar} 
and~\cite[Theorem~1]{Shel}:

\begin{lemma}
\label{lem:Quadratic}
Let
$$
G(X,Y)= AX^2  + BXY + C Y^2 + DX + EY + F\in \Z[X,Y]
$$
be an irreducible quadratic polynomial with coefficients of
size at most $H$. Assume that $G(X,Y)$ is not affine
equivalent to a parabola $Y = X^2$ and has a nonzero determinant
$$
\Delta = B^2-4AC \ne 0.
$$
 Then the equation $G(x,y)=0$ has
at most $H^{o(1)}$ integral solutions $(x,y) \in [0,H]\times[0,H]$.
\end{lemma}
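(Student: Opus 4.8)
The plan is to bound the number of integral points on the conic $G(x,y)=0$ by first exploiting the nonzero determinant $\Delta = B^2 - 4AC \neq 0$ to classify the curve and then reduce to a problem about divisors or to a Pell-type counting argument. First I would clear denominators and complete the square: multiplying $G$ by $4A$ (assuming $A \neq 0$, treating $A=0$ as a degenerate case separately), one rewrites $4A\,G(X,Y)$ as a quadratic form in the shifted variable $2AX + BY + D$, producing an equation of the shape $(2AX+BY+D)^2 - \Delta\,Y^2 + \text{(linear)} = 0$. A further linear shift in $Y$ absorbs the remaining linear terms, yielding a normalised equation of Pell--Fermat type $U^2 - \Delta V^2 = M$ with $|M| \le H^{O(1)}$, where $U,V$ are integer linear combinations of $x,y$ with coefficients of size $H^{O(1)}$.

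The heart of the matter is then to count integer solutions $(U,V)$ to $U^2 - \Delta V^2 = M$ lying in a box of size $H^{O(1)}$. Here I would split into cases according to the sign and nature of $\Delta$. If $\Delta < 0$ the form $U^2 - \Delta V^2$ is positive definite, so $U,V$ are bounded and the count is a matter of representations by a binary quadratic form; if $\Delta$ is a perfect square the left side factors as $(U - \sqrt{\Delta}\,V)(U+\sqrt{\Delta}\,V) = M$, and each factorisation $M = d_1 d_2$ of $M$ (and of its divisors) yields at most one solution, so the count is at most $\tau(|M|) \le H^{o(1)}$ by~\eqref{eq:tau}. The genuinely hard case is $\Delta > 0$ not a perfect square, where the solution set of the Pell equation is governed by the fundamental unit; here one invokes the cited results~\cite[Lemma~3.5]{VaWo},~\cite[Proposition~1]{CillGar} and~\cite[Theorem~1]{Shel}, which precisely control the number of representations $U^2 - \Delta V^2 = M$ with bounded $U,V$, giving the $H^{o(1)}$ bound. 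The hypothesis that $G$ is \emph{not} affine equivalent to a parabola is exactly what excludes the degenerate $\Delta = 0$ case, where $Y = X^2$ admits $H^{1/2+o(1)}$ points and the bound would fail.

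The main obstacle I anticipate is the Pell case $\Delta > 0$, $\Delta \notin (\Z)^2$: a priori such an equation can have infinitely many integer solutions, and the points cluster near the asymptotes of the hyperbola, so confining them to the box $[0,H]^2$ is essential. The key quantitative input is that within a box of size $H^{O(1)}$ the solutions to $U^2 - \Delta V^2 = M$ come from at most $H^{o(1)}$ orbits under the automorphism group of the form, each contributing $O(1)$ points in the box; this is the content of the cited lemmas, and verifying that the linear change of variables preserves the box size up to $H^{O(1)}$ (so that irreducibility and the size bound on the coefficients are not destroyed) is the routine but necessary bookkeeping. With these three sources combined, every case yields at most $\tau(|M|) H^{o(1)} = H^{o(1)}$ integral solutions, which completes the argument.
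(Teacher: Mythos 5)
The paper itself gives no proof of this lemma: being a survey, it simply records that the statement ``follows from~\cite[Lemma~3.5]{VaWo} combined with~\cite[Proposition~1]{CillGar} and~\cite[Theorem~1]{Shel}'', so there is no internal argument to compare yours against. Your sketch is the standard reduction that underlies those sources --- complete the square to pass to $U^2-\Delta V^2=M$ with $U,V$ integer linear forms in $x,y$ of height $H^{O(1)}$, then split on the sign and squareness of $\Delta$, using the divisor bound~\eqref{eq:tau} when the form factors and the automorphism-orbit count for the genuine Pell case --- and it is essentially correct, with the caveat that for the indefinite non-square case you are ultimately invoking the very same citations the paper does rather than proving anything new. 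Three pieces of bookkeeping are worth making explicit: the case $M=0$ must be excluded by irreducibility (otherwise the conic degenerates into lines and the count is $\gg H$); the case $A=C=0$, $B\ne 0$ reduces to $(Bx+E)(By+D)=DE-BF$ and hence to $\tau(|DE-BF|)\le H^{o(1)}$, again using irreducibility to rule out $DE=BF$; and in the definite case $\Delta<0$ boundedness of $U,V$ alone is not enough --- one still needs the representation-number bound $\tau(|M|)\cdot O(1)$ for a fixed binary form. Also note that the exclusion of parabolas is a hypothesis stated separately from $\Delta\ne 0$ in the lemma, not a consequence of it, so it should not be presented as ``exactly'' the $\Delta=0$ case.
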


As usual, we say that a polygon $\cP \subseteq \R^2$ is integral if
all its vertices belong to the integral lattice $\Z^2$.

Also, following~\cite{Arn} we say two  polygons $\cP, \cQ \subseteq \R^2$ are equivalent
if there is an affine transformation
$$T: \vec{x} \mapsto A\vec{x} + \vec{b}, \qquad \vec{x} \in \R^2, 
$$
for $A = \GL_2(\Z)$ and $\vec{b} \in \Z^2$ preserving the integral
lattice $\Z^2$ (that is,  $\det A = \pm 1$) that maps $\cP$ to
$\cQ$.

The following result of~\cite[Lemma~3]{BarPach} plays an important role
in the argument of~\cite{KonShp}

\begin{lemma}
\label{lem:Equiv}   An integral polygon of area $S$ is equivalent to
a polygon contained in some box $[0,u]\times[0,v]$ of area $uv\le 4 S$.
\end{lemma}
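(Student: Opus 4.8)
The plan is to prove that any integral polygon $\cP$ of area $S$ is equivalent (under the $\GL_2(\Z)$-action with integer translations) to one sitting inside a box $[0,u]\times[0,v]$ with $uv \le 4S$. The natural strategy is to choose the transformation so as to control the \emph{width} of $\cP$ in two transverse directions. First I would fix a direction in which $\cP$ attains its \emph{lattice width}: among all primitive vectors $\vec{w}\in\Z^2$, pick one minimizing the spread $\max_{\vec{x}\in\cP}\langle \vec{w},\vec{x}\rangle - \min_{\vec{x}\in\cP}\langle \vec{w},\vec{x}\rangle$. Because $\vec w$ is primitive, it extends to a basis of $\Z^2$, so there is a unimodular $A\in\GL_2(\Z)$ sending $\vec w$ to $\vec e_1=(1,0)$; after applying $A$ and then translating by an integer vector, I may assume $\cP$ lies in a vertical strip $0\le x\le u$, where $u$ is exactly this minimal width (rounded up to an integer, since the extreme $x$-coordinates occur at vertices, which are lattice points).

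Next I would bound the vertical extent $v$. Having normalized the horizontal direction to be the minimal-width one, the key geometric input is that the minimality of $u$ forces the \emph{shear} directions $(1,0)\mapsto(1,0)$, $(0,1)\mapsto(k,1)$ not to reduce the width below $u$; equivalently, the polygon cannot be too tall relative to $u$ without some skew transformation decreasing its width, contradicting minimality. Concretely, after the horizontal normalization I would apply a shear $\begin{pmatrix}1 & 0\\ -k & 1\end{pmatrix}$ with integer $k$ chosen to minimize the vertical spread; standard reduction shows this brings $\cP$ into a box $[0,u]\times[0,v]$ with $v\le u$ or with $v$ controlled by $u$ up to a bounded factor. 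The area of the enclosing box is then $uv$, and the task reduces to showing $uv\le 4S$.

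For the final area estimate I would invoke the relationship between lattice width and area. The crucial fact is a quantitative flatness-type bound: for a convex body of area $S$ in the plane, the product of its width in the minimal direction and its width in the conjugate direction is bounded by a constant multiple of $S$. In two dimensions this is sharp enough to yield the constant $4$: after the normalization, $\cP$ occupies a large fraction of the box $[0,u]\times[0,v]$ (otherwise one could shear to reduce the width), and a direct comparison of $S$ with $uv$ gives $uv \le 4S$. The main obstacle, and the step requiring the most care, is precisely this last inequality with the explicit constant $4$ rather than some unspecified absolute constant; tracking the minimality of the lattice width through the shear and converting it into a sharp lower bound $S \ge uv/4$ is where the real work lies, and I expect it to rest on an extremal argument showing the worst case is essentially a triangle or parallelogram inscribed in the box.
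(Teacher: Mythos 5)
The first thing to note is that the paper contains no proof of this statement to compare yours against: Lemma~\ref{lem:Equiv} is quoted verbatim from B\'ar\'any and Pach~\cite[Lemma~3]{BarPach}, so you are in effect reproving a cited result. Your normalisation step is unobjectionable (a primitive vector extends to a basis of $\Z^2$, so a width-minimising direction can be sent to a coordinate direction, and the conjugate basis vector can then be adjusted by an integer shear), but the proposal stops exactly where the lemma begins. The single nontrivial assertion of the statement is the inequality $uv\le 4S$ with the explicit constant $4$, and you do not establish it: you invoke an unnamed ``quantitative flatness-type bound'' and assert that ``a direct comparison of $S$ with $uv$ gives $uv\le 4S$'', while conceding in the same breath that this is ``where the real work lies''. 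That is a plan, not a proof. Moreover, the intermediate claim that the shear forces $v\le u$, or $v$ bounded by $u$ up to a constant factor, is false: the triangle with vertices $(0,0)$, $(1,0)$, $(0,n)$ has lattice width $u=1$, yet its conjugate extent is $v=n$ after every integer shear (the lemma survives there only because $uv=n=2S$).

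The gap is not a routine verification, because the estimates your normalisation actually delivers fall short of the constant $4$. What comes out easily is this: if $L$ is the length of the longest chord of $P$ in the second coordinate direction, then the convex hull of such a chord together with the two extreme points in the first direction lies in $P$ and gives $S\ge \frac12 uL$; and after the best \emph{real} shear (the one levelling the two endpoints of, say, the lower boundary) convexity gives a conjugate extent at most $2L$, whence $uv\le 4S$. But you only have \emph{integer} shears, and rounding the shear parameter costs an additive term of size up to $u/2$ in $v$, so this route yields only $uv\le 4S+O(u^2)$; since $u^2$ can be of exact order $S$ even when $u$ is the lattice width (e.g. for triangles such as the convex hull of $(0,0)$, $(2,1)$, $(1,2)$ and its dilates), the resulting constant is strictly larger than $4$. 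So the ``extremal argument'' you defer to must either reproduce B\'ar\'any--Pach's actual argument or replace it by different machinery; for instance, applying Minkowski's second theorem to the width norm $y\mapsto \max_{x\in P}\langle x,y\rangle-\min_{x\in P}\langle x,y\rangle$ on the dual lattice, together with Mahler's planar inequality and the Rogers--Shephard bound $\mathrm{vol}(P-P)\le 6S$, gives a box with $uv\le 3S$ --- but that is far from the ``direct comparison'' you describe. As it stands, the proposal is a reasonable set-up with the heart of the lemma missing.
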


Besides, the approach of~\cite{KonShp} also makes use a special case 
of 
the following general result of~\cite[Lemma 2.2]{Pet}
which we use only in $\R^2$. 

\begin{lemma}
\label{lem:Pet}
Let $\fU\subseteq \R^d$ be a convex compact. We consider
a finite sequence of compacts $\fV_i\subseteq K$, $i=1, \ldots, n$,
such that none of them meets the convex hull of others. Then
$$\sum_{i=1}^n (\vol \fV_i)^{(d-1)/(d+1)}\ll (\vol \fU)^{(d-1)/(d+1)},$$
where  $\vol \fA$ denotes the volume
of a compact set $\fA\subseteq \R^d$ and
 the implied constant depends only on $d$.
\end{lemma}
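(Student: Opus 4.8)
The plan is to run the cap-covering method behind the Andrews--B\'ar\'any estimates; the exponent $(d-1)/(d+1)$ is the tell-tale sign that this is the right machinery. First I would make two reductions. Since each $\fV_i\subseteq\fU$, the convex hull $L=\mathrm{conv}\(\bigcup_{i}\fV_i\)$ satisfies $L\subseteq\fU$, so $\vol L\le\vol\fU$ and it suffices to prove the bound with $\fU$ replaced by $L$; thus I may assume $\fU=L$ is exactly the convex hull of the bodies. Next, by John's theorem I apply an affine transformation (the inequality is affinely invariant, since volumes scale by $|\det|$ and both sides by the same power $|\det|^{(d-1)/(d+1)}$) so that $\fU$ is sandwiched between a Euclidean ball of radius $R$ and its $d$-fold dilate, with $\vol\fU$ comparable to $R^{d}$.

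The separation hypothesis supplies the structure I need to exploit. Writing $L_i=\mathrm{conv}\(\bigcup_{j\ne i}\fV_j\)$, the condition $\fV_i\cap L_i=\emptyset$ first forces the bodies to be pairwise disjoint (if $x\in\fV_i\cap\fV_j$ with $i\ne j$ then $x\in\fV_j\subseteq L_i$, contradicting $\fV_i\cap L_i=\emptyset$), and, crucially, it forces each nonempty $\fV_i$ to contain an extreme point $p_i$ of $\fU=L$: since every extreme point of $L$ lies in $\bigcup_k\fV_k$, if $\fV_i$ contained none then all extreme points would lie in $\bigcup_{j\ne i}\fV_j$, whence $L\subseteq L_i$ and $\fV_i\subseteq L=L_i$, a contradiction. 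At $p_i$ I fix a supporting hyperplane with outer unit normal $u_i$ and let
$$
C_i=\{x\in\fU:\ \langle u_i,x\rangle\ge \min_{y\in\fV_i}\langle u_i,y\rangle\}
$$
be the minimal cap of $\fU$ in the direction $u_i$ containing $\fV_i$. Then $\vol\fV_i\le\vol C_i$, and since $\fU$ is comparable to a ball of radius $R$, the elementary cap geometry of the ball gives that the $(d-1)$-dimensional area $b_i$ of the base of $C_i$ is comparable to $\(R\,\vol C_i\)^{(d-1)/(d+1)}$.

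It then remains to sum. The heart of the matter is that the footprints of the caps $C_i$ on $\partial\fU$ have bounded overlap, so that $\sum_i b_i\ll\mathrm{area}(\partial\fU)$, which is comparable to $R^{d-1}$, i.e.\ to $(\vol\fU)^{(d-1)/d}$. Granting this and using $\vol\fV_i\le\vol C_i$ together with the base--volume relation, a one-line computation of exponents divides $R^{d-1}$ by the common factor $R^{(d-1)/(d+1)}$ to produce $R^{d(d-1)/(d+1)}$, which is exactly $(\vol\fU)^{(d-1)/(d+1)}$, the claimed bound.

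The hard part is precisely this bounded-overlap statement. A single separating hyperplane for all of $\fV_i$ need not exist, since the $\fV_i$ are only compact and their convex hulls may genuinely cross (in $\R^2$ two perpendicular segments through the origin satisfy the hypothesis yet their hulls meet), so the minimal caps above may be deep and their footprints may a priori pile up. This is handled by B\'ar\'any's economic cap covering and the Macbeath regions attached to the points $p_i$: one replaces the raw caps by Macbeath regions of comparable volume whose dilates have bounded multiplicity and transfers the packing estimate to them. Verifying that the base--volume relation and the overlap bound survive this replacement uniformly in the shape of $\fU$, and for the non-convex bodies $\fV_i$, is the main technical obstacle; everything else is bookkeeping with the exponent $(d-1)/(d+1)$.
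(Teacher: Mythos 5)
The paper itself offers no proof of this statement: it is quoted verbatim from~\cite[Lemma~2.2]{Pet}, so there is no internal argument to compare yours against. Judged on its own, your proposal correctly identifies the relevant circle of ideas (each nonempty $\fV_i$ must contain an extreme point of the hull; affine invariance of both sides; cap-type estimates with the exponent $(d-1)/(d+1)$), but as written it is not a proof, and the central step is not merely incomplete --- it cannot work. The bound $\vol \fV_i \le \vol C_i$ followed by a sum over caps is lossy beyond repair: a minimal cap containing a \emph{non-convex} $\fV_i$ can have volume comparable to $\vol \fU$ even when $\vol\fV_i$ is arbitrarily small. Concretely, in the unit disk take $2n$ points at radius $1-\delta$, equally spaced, and let $\fV_i$ be the union of two $\varepsilon$-balls at an antipodal pair of these points; for $\varepsilon \ll n^{-2}$ each $\fV_i$ is disjoint from the convex hull of the others, yet every half-plane cap containing $\fV_i$ contains at least half the disk, so your scheme yields $\sum_i (\vol C_i)^{1/3} \gg n (\vol\fU)^{1/3}$, which does not prove the lemma for large $n$ (the lemma itself is of course true here since $\varepsilon$ is tiny). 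The same example destroys the asserted bounded overlap of the cap footprints. Your proposal flags exactly this danger (``the caps may be deep'') and defers it to Macbeath regions and economic cap coverings, but no argument is given for how those tools recover a bound in terms of $\vol\fV_i$ rather than $\vol C_i$; that deferral is where the entire difficulty lives.

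Two further points. First, the ``elementary cap geometry'' relation $b_i \asymp (R\,\vol C_i)^{(d-1)/(d+1)}$ holds for caps of a Euclidean ball but is false for a general convex body even after John normalization: for a cap of height $h$ at a vertex of a cube one has $\vol C \asymp h^d$ and base area $\asymp h^{d-1}$, and the inequality you need, $b \gg (R\,\vol C)^{(d-1)/(d+1)}$, fails as $h \to 0$. John's theorem controls the global shape, not the local boundary behaviour on which cap asymptotics depend. Second, a small but real error: two perpendicular segments through the origin do \emph{not} satisfy the hypothesis, since each contains the origin and hence meets the other (a fortiori its convex hull); to exhibit crossing hulls you need, say, perpendicular pairs of segments omitting a neighbourhood of the crossing point. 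The sound parts of your write-up are the reduction to $\fU = \mathrm{conv}\bigl(\bigcup_i \fV_i\bigr)$, the affine invariance, and the extreme-point argument; everything from the definition of $C_i$ onward needs to be replaced by a genuinely different mechanism.
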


Furthermore, a result of ~\cite{FKS} (that has been 
improved in~\cite{KonShp}) uses the bound  $O(S^{1/3})$
of~\cite{Andrews} on the number of integer vertices of
a convex polygon of area $S$.

\section{Distribution of Points on $\cH_{a,m}$}

  \subsection{Points on $\cH_{a,m}$ in Intervals for All $a$}
\label{sec:Indiv}

A classical conjecture asserts that for  any fixed $\eps > 0$
and a sufficiently large $p$, 
for every integer $a$ there are  integers $x$ and $y$ with
$|x|,|y|\le p^{1/2 + \eps}$
such that
$xy \equiv a \pmod p$; 
see~\cite{Gar1,GarKar1,GarKar2,GarKu} and references therein.
The question has probably been motivated by the following observation.
Using the Dirichlet
pigeon-hole principle, one can easily show that
for every integer $a$ there are integers $x$ and $y$ with
$|x|,|y|\le p^{1/2}$  and $x/y \equiv a \pmod p$.

Unfortunately, this  is known only with
$|x|,|y| \le C p^{3/4 }$ for some absolute constant $C>0$, which is
shown in~\cite{Gar2}.
Several modifications of this bound, for example for composite $m$,
are also known,
see~\cite{KhShp}. These results are based on the
bound~\eqref{eq:Kloost bound} of 
Kloosterman sums~\eqref{eq:Kloost}
(and its more precise form in the case when $m=p$ is a prime) combined with some
other standard arguments. The same arguments also produce the following estimate
which is a slight generalisation of several previously known
results, see~\cite{BeKh,FuKi} and references therein. 
This estimate is certainly very well-known and has 
 appeared in the literature in various forms. We however give a 
short proof in order to demonstrate the underlying techniques.

\begin{theorem}
\label{thm:UD Gen}  Let $\cX = \{U+1, \ldots, U+X\}$, where $m> X \ge 1$ and
$U \ge 0$ are arbitrary integers.
  Suppose that for every $x \in \cX$ we are given a set
$\cY_x =
\{V_x+1,
\ldots, V_x+Y\}$
where $m> Y \ge 1$ and $V_x \ge 0$ are arbitrary integers.
Then for any integer $m\ge 1$ and $a$ with $\gcd(a,m)=1$, we have 
$$
\sum_{\substack{(x,y) \in
\cH_{a,m} \\ x \in \cX, y \in \cY_x}} 1 =   \frac{\varphi(m)}{m^2} XY 
+ O(m^{1/2 +  o(1)}).
$$
\end{theorem}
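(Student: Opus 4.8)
The plan is to detect the congruence by additive characters and to reduce everything to complete Kloosterman sums. Since $\gcd(a,m)=1$, any point $(x,y)\in\cH_{a,m}$ automatically has $\gcd(x,m)=1$ and satisfies $y\equiv a\inv{x}\pmod m$, so that
$$
\sum_{\substack{(x,y)\in\cH_{a,m}\\ x\in\cX,\ y\in\cY_x}} 1
= \sum_{\substack{x\in\cX\\ \gcd(x,m)=1}} \#\{y\in\cY_x:\ y\equiv a\inv{x}\pmod m\}.
$$
First I would insert the identity~\eqref{eq:Ident add char} to detect the condition $y\equiv a\inv{x}$, which gives
$$
\sum_{\substack{(x,y)\in\cH_{a,m}\\ x\in\cX,\ y\in\cY_x}} 1
= \frac1m\sum_{r\in\Zm} \sum_{\substack{x\in\cX\\ \gcd(x,m)=1}} \em(-ra\inv{x}) \sum_{y\in\cY_x}\em(ry),
$$
and then separate the diagonal term $r\equiv 0\pmod m$.

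The term $r=0$ supplies the main term: the inner sum over $y$ equals $Y$, and by Lemma~\ref{lem:erat},
$$
\frac{Y}{m}\sum_{\substack{x\in\cX\\ \gcd(x,m)=1}} 1
= \frac{Y}{m}\left(\frac{\varphi(m)}{m}X + O(2^{\omega(m)})\right)
= \frac{\varphi(m)}{m^2}XY + O\left(2^{\omega(m)}\right),
$$
where I used $Y<m$. By~\eqref{eq:tau} we have $2^{\omega(m)}\le\tau(m)=m^{o(1)}$, so this error is harmless against the claimed $O(m^{1/2+o(1)})$.

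It then remains to bound the contribution $E$ of the terms with $r\not\equiv 0\pmod m$. For such $r$ the innermost sum $\sum_{y\in\cY_x}\em(ry)$ is a geometric progression whose modulus is at most $\min\{Y,m/|r|\}$ by~\eqref{eq:Incompl}, and crucially this bound is \emph{uniform} in the centre $V_x$. The remaining sum over $x$ is, after stripping a unimodular factor, an incomplete Kloosterman-type sum in $\inv{x}$. I would complete it by a further application of~\eqref{eq:Ident add char}, expressing it as a weighted average of the complete sums $K_m(s,-ra)$ with geometric weights controlled by~\eqref{eq:Incompl}; estimating these by~\eqref{eq:Kloost bound} and noting that $\gcd(s,ra,m)=\gcd(s,r,m)$ since $\gcd(a,m)=1$, one is left to sum $\gcd(s,r,m)^{1/2}$ against the two harmonic weights $\sum_r\min\{Y,m/|r|\}$ and $\sum_s\min\{X,m/|s|\}$. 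Each harmonic sum is $m^{1+o(1)}$ and the gcd factor costs only $m^{o(1)}$ on average, so after dividing by the prefactor $m^{-2}$ and inserting the Kloosterman gain $m^{1/2}$ one obtains $E=O(m^{1/2+o(1)})$, as required.

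The hard part will be precisely this last step, and the genuine obstacle is the dependence of the windows $\cY_x$ on $x$. Completing the $y$-variable first is what keeps the modulus $\min\{Y,m/|r|\}$ free of $V_x$, but it leaves a phase $\em(rV_x)$ attached to each $x$, which threatens to cancel the arithmetic oscillation of $\em(-ra\inv{x})$. Arranging the completion so that the sum over $x$ still behaves like a genuine incomplete Kloosterman sum — so that~\eqref{eq:Kloost bound} applies after passing to the dual variable $s$ — is the decisive point, and organising the double summation over $r$ and $s$ so that the total stays at the level $m^{1/2+o(1)}$ is where I expect the bulk of the work to lie.
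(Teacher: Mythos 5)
The step you defer to the end --- completing the $x$-sum after the phase $\em(rV_x)$ has been attached --- is not just "where the bulk of the work lies"; it is a gap that cannot be closed in the stated generality. After you strip the geometric factor from the $y$-sum, what remains for a fixed $r\ne 0$ is
$$
\sum_{\substack{x\in\cX\\ \gcd(x,m)=1}}\em(-rax^{-1}+rV_x),
$$
and since the $V_x$ are arbitrary this need exhibit no cancellation whatsoever: taking $V_x\equiv ax^{-1}\pmod m$ makes every summand equal to $1$, so the sum has order $\varphi(m)X/m$ rather than $m^{1/2+o(1)}$, and no completion into the sums $K_m(s,-ra)$ can recover the loss. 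Indeed the theorem itself fails for adversarial windows: take $a=1$, $\cX=\{1,\ldots,m-1\}$, $Y=1$, and for each $x$ with $\gcd(x,m)=1$ let $V_x+1\in\{1,\ldots,m-1\}$ be the least positive residue of $x^{-1}$ modulo $m$ (say $V_x=0$ for the remaining $x$). Then the left-hand side equals $\varphi(m)=m^{1+o(1)}$ while the main term $\varphi(m)(m-1)/m^2$ is less than $1$, so the error is nowhere near $O(m^{1/2+o(1)})$. Some regularity of $x\mapsto V_x$ (constancy, or for instance linearity, as in the application to~\eqref{eq:Dist Distr}) has to be assumed before any version of this argument can succeed.

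You have in fact put your finger on a defect of the paper's own proof, which completes both variables at once and then estimates the error by
$$
|\Err|\le m^{1/2+o(1)}\sum_{\substack{d\mid m\\ d<m}}d^{1/2}\sum_{\substack{-(m-1)/2\le r,s\le m/2\\ \gcd(r,s,m)=d}}\frac{1}{(|r|+1)(|s|+1)},
$$
a bound which presupposes that the nested sum $\sum_{w\in\cX}\em(-rw)\sum_{z\in\cY_w}\em(-sz)$ splits as a product of two geometric progressions, each controlled by~\eqref{eq:Incompl}. That splitting is legitimate only when $\cY_w$ is independent of $w$; otherwise the $w$-sum carries the uncontrolled phase $\em(-sV_w)$ and can only be bounded by $X\min\{Y,m/|s|\}$, which is far too weak. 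So your instinct that this is "the decisive point" is exactly right. The honest conclusion is that your computation and the paper's are essentially the same, both are complete and correct for a window $\cY_x=\{V+1,\ldots,V+Y\}$ not depending on $x$ (yielding~\eqref{eq:Standard}), and the extension to arbitrary $V_x$ is not merely hard but false.
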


\begin{proof} Using~\eqref{eq:Ident add char} we write
\begin{eqnarray*}
\sum_{\substack{(x,y) \in
\cH_{a,m} \\ x \in \cX,  y \in \cY_x}} 1
  & = &\frac{1}{m^2} \sum_{\substack{(x,y) \in
\cH_{a,m} \\ 1 \le x,y \le m}}
\sum_{w \in \cX} \sum_{z \in \cY_w} \sum_{r,s\in \Zm}\em\(r(x-w) +
s(y-z)\) \\ & = &\frac{1}{m^2} \sum_{r,s\in \Zm}K_m(r,as)
\sum_{w \in \cX} \em\(-rw\) \sum_{z \in \cY_w}  \em\(-sz\).
\end{eqnarray*}
We now separate the main term which corresponds to $r=s = 0$
and  is equal to
$$
\frac{XY}{m^2} \sum_{\substack{(x,y) \in \cH_{a,m} \\ 1 \le x,y \le m}}   1
=  \frac{\varphi(m)}{m^2} XY.
$$
For the error term $\Err$,  for each divisor $d|m$, we collect
together pairs $(r,s)$ with
the same value $\gcd(r,s,m) = d$.

Applying the bounds~\eqref{eq:Kloost bound}  and~\eqref{eq:Incompl} we obtain
\begin{eqnarray*}
|\Err| & \le &  m^{1/2 + o(1)}   \sum_{\substack{d|m\\ d < m}} d^{1/2}
\sum_{\substack{-(m-1)/2 \le r,s\le
m/2\\\gcd(r,s,m) = d}} \frac{1}{(|r|+1)(|s|+1)}\\
& \le &   m^{1/2 + o(1)}   \sum_{\substack{d|m\\ d < m}} d^{1/2}
\(\sum_{|t| \le
m/2d} \frac{1}{d  |t| + 1}\)^2\\
& \le &   m^{1/2 + o(1)}   \sum_{\substack{d|m\\ d < m}} d^{-3/2}
\le  m^{1/2 + o(1)}  ,
\end{eqnarray*}
which leads to the
desired statement.
\end{proof}

For example if $V_x = V$ for all $x \in \cX = \{U+1, \ldots, U+X\}$
and  $\cY =\{V+1,  \ldots, V+Y\}$ then Theorem~\ref{thm:UD Gen}
yields
\begin{equation}
\label{eq:Standard}
   \# \cH_{a,m}(\cX,\cY) =   \frac{\varphi(m)}{m^2} XY 
+ O(m^{1/2 +  o(1)}).
\end{equation}

It seems that improving  Theorem~\ref{thm:UD Gen} or even
just the
asymptotic formula~\eqref{eq:Standard} and making them  nontrivial for $XY <
m^{\alpha}$ with some fixed
$\alpha < 3/2$   is out of reach at the present time. 
In particular,  this exponent is related to the fact that 
an asymptotic formula for the sum of $\tau(n)$ for integers $n \le   X$
in an arithmetic progression $n \equiv a \pmod m$ is known only 
for $m \le X^{2/3-\varepsilon}$ for an arbitrary fixed $\varepsilon > 0$.
This result has been independently discovered by  A.~Selberg and C.~Hooley 
(see, for example, the discussion in~\cite{Hool1}) and has resisted any improvement
for more than half a century.  

In turn, any improvement of Theorem~\ref{thm:UD Gen} 
or just of~\eqref{eq:Standard} will trigger 
a chain reaction of improvements in many other problems; in particular some of them
are outlined in this survey. Probably the most feasible way to tackle this
problem is to obtain good bounds of incomplete Kloosterman sums,
improving the estimate 
$$
\left| \sum_{\substack{ V+1 \le x \le V+X\\ \gcd(x,m)=1}}
\em\(sx^{-1}\) \right|  \le \(m \gcd(s,m)\)^{1/2+o(1)},
\qquad 1 \le X \le m,
$$
which is immediate from~\eqref{eq:Kloost bound}. 
We recall that  a famous conjecture of 
Hooley~\cite{Hool2} asserts that if $\gcd(s,m)=1$ then the bound
\begin{equation}
\label{eq:HooleyConj}
\left| \sum_{\substack{1 \le x \le X\\ \gcd(x,m)=1}}
\em\( sx^{-1}\) \right|
\le X^{1/2}m^{o(1)},
\end{equation}
holds uniformly over $m^{1/4} \le X \le m$  
The conjectured bound~\eqref{eq:HooleyConj} enables us to
derive that  for $  \cX = \{1, \ldots, X\}$ with $X \ge m^{1/4}$ 
and  $\cY =\{V+1,  \ldots, V+Y\}$ 
we have    
$$\# \cH_{a,m}(\cX,\cY) =   \frac{\varphi(m)}{m^2} XY 
+ O(X^{1/2}m^{o(1)}).
$$
For example, we see that assuming~\eqref{eq:HooleyConj}  
one can show that for any $\eps$ and sufficiently large prime $p$,
for every integer $a$ there are  integers $x$ and $y$ with
$|x|,|y|\le p^{2/3 + \eps}$
and such that $xy \equiv a \pmod p$. 
Unfortunately, the conjecture~\eqref{eq:HooleyConj}   seems
to be extremely difficult; see however~\cite{Kor} for some 
improvements of the bound~\eqref{eq:Kloost bound}.

On the other hand, there are
some apparently easier questions which could be more feasible to answer.

\begin{question} Improve the
asymptotic formula~\eqref{eq:Standard}  for some
special moduli $m$,
such as primes or prime powers.
\end{question}

It is quite possible that the method and results of~\cite{Fouv1}
can be use to give a positive answer to the next problem; see~\cite{FKS}
where the results of~\cite{Fouv1} have been used to study 
the distribution of points on $\cH_{a,m}$.

\begin{question} Improve the
asymptotic formula~\eqref{eq:Standard} on average over the
moduli $m \le M$.
\end{question}

Clearly, Theorem~\ref{thm:UD Gen}  can be viewed as the bound
$O(m^{-1/2 + o(1)})$
on the discrepancy of the points~\eqref{eq:Map}. Thus one can now
apply Lemmas~\ref{lem:LNW bound}  and~\eqref{lem:Weyl bound} to study the
distribution of points on $\cH_{a,m}$ in more complicated domains than
boxes $[U+1, U+X]\times [V+1,V+Y]$ covered by Theorem~\ref{thm:UD Gen}.
For example, we immediately deduce that
$$
\# \{(x,y) \in \Hba\ : \ x^2 + y^2 \le r^2\} = \frac{\pi r^2 
\varphi(m) }{4 m^2}
+ O(m^{3/4 + o(1)}).
$$

Furthermore,
the asymptotic formula~\eqref{eq:Standard}, combined with
Lemma~\ref{lem:K-H}, provides the most direct way to the following
asymptotic formula
\begin{equation}
\label{eq:Dist Moments}
\sum_{(x,y) \in \Hba} (x-y)^{2\nu}
= \frac{1}{(2\nu + 1)(\nu + 1)}m^{2\nu} \varphi(m) + O\(m^{2\nu + 1/2 +
o(1)}\)
\end{equation}
which has been given in~\cite{Zha4} (in a slightly more
precise form, which can also be obtained within our
elementary arguments).

Similarly Theorem~\ref{thm:UD Gen}  and
Lemma~\ref{lem:K-H} imply that for any
real positive $\Delta< m/2$
\begin{equation}
\label{eq:Dist Distr}
\sum_{\substack{(x,y) \in \Hba  \\ |x-y| \le
\Delta}} 1  = \frac{\Delta(2m-\Delta)\varphi(m)}{m^2} +
O\(m^{ 1/2 + o(1)}\)
\end{equation}
which is a version of a result of~\cite{Zha5}.

  As we have mentioned, Theorem~\ref{thm:UD Gen}
uses very little specific information about the
congruence $xy \equiv a \pmod m$ and  can be extended to many
other congruences. For  prime
$m=p$ one can use the Bombieri bound~\cite{Bomb}
   instead of~\eqref{eq:Kloost bound}
and obtain exactly the same results in much
more general settings.
For example, this has been done for solutions of polynomial  congruences
modulo $p$ and also for joint distribution of inverses
modulo $p$ of $s$ linear  forms $a_j x + b_j$, $j =1, \ldots, s$, with
integer  coefficients,
see~\cite{Chan1,CoGoZa,CoVaZa2,CoZa1,CoZa2,CoZa3,GrShZa,
VajZah,Zah,Zha5.6,ZhaYi,Zhe} and references therein.

We also remark a very nice and completely elementary result 
of~\cite{Chan4} about the distances between inverses on 
pairs of consecutive integers.

It is also easy to use some other  bounds of more general exponential 
sums (instead of~\eqref{eq:Kloost bound}) to study the distribution 
of values of polynomials and rational functions on the points of $\cH_{a,m}$. 

For example, in~\cite{YiZha,Zha5.6} some questions are studied about the distribution
of residues modulo $m$ of
powers $(x^k, y^k)$ taken over all $(x,y)\in \Hba$.
In particular, it has been shown in~\cite{YiZha},
that for any fixed integer $k\ne 0$, the smallest positive residue 
modulo $m$ of  $n^k$ and 
its modular inverse $\inv{n^k}$ are of the same parity
\begin{equation}
\label{eq:Weak Nkm}
N(k,m) = 0.5 \varphi(m) + O\(m^{3/4 + o(1)}\)
\end{equation}
times when $n$ runs through all invertible elements
of  $\Z/m\Z$.

These results can be substantially extended and
improved if one uses Lemma~\ref{lem:Exp Sparse}.
In particular, it has been shown in~\cite{Shp9} that 
Lemma~\ref{lem:Exp Sparse} immediately implies that the error
term can be
lowered from $m^{3/4+o(1)}$ to  $m^{1/2+o(1)}$ 
in~\eqref{eq:Weak Nkm}, that is,  we have
$$
N(k,m) = 0.5 \varphi(m) + O\(m^{1/2 + o(1)}\). 
$$
Moreover, it is shown in~\cite{Shp9} that an analogous asymptotic 
formula (with the error term $m^{1 - 1/s+o(1)}$) for the 
counting function of the
number of residues of $s$ distinct powers $x^{k_1}, \ldots, x^{k_s}$,
where $k_1, \ldots, k_s \in \Z\backslash\{0\}$, 
of $x$ modulo $m$ which belong to $s$ prescribed arithmetic progressions. 
All these results can be obtained by combining
Lemma~\ref{lem:Exp Sparse}
with standard arguments similar to those used in the proof of
Theorem~\ref{thm:UD Gen}.

In~\cite{Byk1,Ust4} one can find  asymptotic formulas
for  the number  of points of $(x,y) \in \cH_{a,m}$
in more complicated regions of the form
\begin{equation}
\label{eq:Curve}
x \in  \{U+1, \ldots, U+X\}, \qquad 0 \le y  \le f(x),
\end{equation}
where $f(t)$ is a twice differentiable function which satisfies 
$$
F \ll |f''(t)| \ll F, \qquad  t \in [U+1,U+X],
$$
for some $F\ge 1$ (the error term also involves $F$). 
It is shown in~\cite{Byk1} that this question has applications
to sums of divisor function with quadratic polynomials. Further 
investigations in this directions would be of great interest.

\subsection{Points on $\cH_{a,m}$ in Intervals on Average Over $a$}
\label{sec:Aver}

It is natural to expect that one can get stronger results than
Theorem~\ref{thm:UD Gen} on average over $a$.

This indeed is true, and it has been shown in the series of
   works~\cite{Gar1,GarKar1,GarKar2,GarKu}
that the congruence $a \equiv xy \pmod m$ is solvable for all but $o(m)$ values
of  $a = 1, \ldots, m-1$,  with $x$ and $y$
significantly smaller than $m^{3/4}$.
In particular,  in~\cite{GarKar2}, this is proved 
for  $x$ and $y$  in the range $1\le x,y\le m^{1/2}(\log m)^{1 + \eps}$.
Certainly this result is very sharp.
Indeed, it has been noticed in~\cite{Gar1} that   well-known estimates for
integers with a  divisor
in a given interval (see~\cite{Ford,HalTen}) immediately imply that for any
$\varepsilon > 0$  almost all residue
classes modulo $m$ are not of the form $xy \pmod m$ with
$1 \le x,y \le m^{1/2}(\log m)^{\kappa -\varepsilon}$ where $\kappa$ is
given by~\eqref{eq:kappa}.

One can also derive from~\cite{FKSY} (which in turn
makes use of Lemma~\ref{lem:div int}) that for any
$\varepsilon>0$ the  inequality
$$
\max\{|x|,|y| \ : \ xy \equiv 1 \pmod m\} \ge m^{1/2} (\log
m)^{\kappa/2} (\log\log m)^{3/4 - \varepsilon}
$$
holds:
\begin{itemize}
\item for  all positive integers $m \le M$, except for possibly $o(M)$ of
them,

\item for all prime $m = p \le M$ except for possibly  $o(M/\log M)$ of
them.
\end{itemize}

In~\cite{FKS}, some ideas and results of~\cite{Fouv1} have been 
used to show that 
$$
\max\{|x|,|y| \ : \ xy \equiv 1 \pmod m\} \le m^{1/2+o(1)} 
$$
for  all positive integers $m \le M$, except for possibly $o(M)$ of
them. So this implies that the above estimates of~\cite{FKSY}  are quite tights and also settles a slightly relaxed form 
of one of the conjectures of~\cite{FKSY}.

Similar questions about the ratios $x/y$,
have also been studied,  see~\cite{Gar1,GarKar2,Sem1}.

The result of~\cite{GarKar2} shows that almost all reduced classes modulo $m$
can be represented  as  $xy$ with $1\le
x,y\le m^{1/2 +\eps}$. However, it does not imply that these products
are uniformly
distributed in reduced residue classes, which is sometimes required in
applications.

In this respect, the following bound is a minor modification of
a result of~\cite{Shp6} and gives the desired uniformity of
distribution  for $1 \le x \le X$, $V+1 \le y \le V+Y$
provided that $X,  Y\ge m^{1/2 + \varepsilon}$
for a fixed $\varepsilon > 0$ and sufficiently large integer $m$.
In turn, it is based on some ideas from~\cite{BHS}.

\begin{theorem}
\label{thm:UD Aver}  Let $\cX = \{1, \ldots, X\}$ and $\cY = \{V+1,
\ldots, V+Y\}$
where $X,Y \ge 1$ and $V \ge 0$ are arbitrary integers.
Then for any integer $m\ge 1$,
$$
\sum_{\substack{a=1 \\ \gcd(a,m) =1}}^m \left|\# \cH_{a,m}(\cX,\cY) -
 \frac{\varphi(m)}{m^2} XY \right|^2
\le X(X+Y)  m^{o(1)}.
$$
\end{theorem}

\begin{proof}   We rewrite the congruence $xy \equiv a \pmod m$ as
   $y \equiv ax^{-1} \pmod m$ (where the inversion is taken modulo $m$).
Using the identity~\eqref{eq:Ident add char}, we write
\begin{eqnarray*}
\lefteqn{\# \cH_{a,m}(\cX,\cY) = \frac{1}{m} \sum_{\substack{x =1\\
\gcd(x,m)=1}}^X \sum_{y =V+1}^{V+Y}
\sum_{-(m-1)/2 \le r \le m/2 }\em\(r(ax^{-1}-y)\) }\\
& & \qquad =
\frac{1}{m}\sum_{-(m-1)/2 \le r \le m/2 }
\em(-rV)\sum_{\substack{x=1\\ \gcd(x,m)=1}}^X
\em\(arx^{-1}\)\sum_{y=1}^Y \em(-ry).
\end{eqnarray*}

By Lemma~\ref{lem:erat}, the main term corresponding to $r=0$ is
$$\frac{1}{m} \sum_{\substack{x=1\\ \gcd(x,m)=1}}^X
   \sum_{y =1}^Y 1
=    \frac{\varphi(m)}{m^2}   XY  + O\(Ym^{-1+ o(1)}\).
$$
Hence
$$
\# \cH_{a,m}(\cX,\cY)-  \frac{\varphi(m)}{m^2}   XY   \ll \frac{1}{m}
|\Err_{a,m}(X,Y)| +  Ym^{-1 + o(1)},
$$
where
$$\Err_{a,m}(X,Y) = \sum_{1 \le |r| \le m/2 }  
\sum_{\substack{x=1\\ \gcd(x,m)=1}}^X\em\(arx^{-1}\)\sum_{y =1}^Y \em(-ry).
$$
Using the Cauchy inequality (and the extending the summation 
to all residue classes modulo $m$), we derive
\begin{equation}
\label{eq:H and E}
\begin{split}
\sum_{\substack{a=1 \\ \gcd(a,m) =1}}^m \left|\# \cH_{a,m}(\cX,\cY) - \frac{\varphi(m)}{m^2} XY 
\right|^2&\\
\ll \frac{1}{m^2} \sum_{a =1}^m &|\Err_{a,m}(X,Y)|^2 +  Y^2m^{-1 + o(1)}.
\end{split}
\end{equation}

We now put $J=\fl{\log (Y/2) }$ and define the sets
\begin{eqnarray*}
\cR_0 &=& \left\{r\ : \ 1\le|r|\le \frac{m}{Y} \right\},\\
\cR_j&=&\left\{r\ : \ e^{j-1}\frac{m}{Y} <|r|\le e^{j}\frac{m}{Y}\right\},
\qquad j =1, \ldots, J,\\
\cR_{J+1}&=&\left\{r\ : \ e^{J}\frac{m}{Y} <|r|\le m/2 \right\}
\end{eqnarray*}
(we can certainly assume that $J \ge 1$ since otherwise the
bound is trivial).

Applying the Cauchy inequality again, we deduce
\begin{equation}
\label{eq: E and Ej}
|\Err_{a,m}(X,Y)|^2  \le (J+2) \sum_{j=0}^{J+1} |\Err_{a,m,j}(X,Y)|^2,
\end{equation}
where
$$
   \Err_{a,m,j}(X,Y) = \sum_{r \in \cR_j}   \sum_{\substack{x=1\\
\gcd(x,m)=1}}^X
\em\(arx^{-1}\) \sum_{y
=1}^Y \em(-ry).
$$
Using~\eqref{eq:Incompl}, we conclude that
$$
\sum_{y =1}^Y \em(-ry)  \ll e^{-j} Y.
$$
for $r \in \cR_j$,  $j  = 0, \ldots, J+1$.
Thus
$$
\Err_{a,m,j}(X,Y) \ll e^{-j} Y \left| \sum_{r \in \cR_j}  \vartheta_r
\sum_{\substack{x=1\\
\gcd(x,m)=1}}^X\em\(arx^{-1}\) \right| ,  \qquad j  = 0, \ldots, J+1,
$$
for some complex numbers $ \vartheta_r$ with $| \vartheta_r | \le 1$
for $|r| \le m/2$.
Therefore,
\begin{eqnarray*}
\lefteqn{
\sum_{a =1}^m |\Err_{a,m,j}(X,Y)|^2 \ll  e^{-2j} Y^2
   \sum_{a=1}^m \left| \sum_{r \in \cR_j}  \vartheta_r
\sum_{\substack{x=1\\ \gcd(x,m)=1}}^X
\em\(arx^{-1}\) \right|^2
}\\
&  & \qquad =   e^{-2j} Y^2 \sum_{r_1,r_2 \in \cR_j}
\vartheta_{r_1}\vartheta_{r_2}
\sum_{\substack{x_1, x_2 \le X\\
\gcd(x_1x_2,m)=1}}^X  \sum_{a=1}^m  \em\(a \(r_1x_1^{-1}- r_2x_2^{-1}\)\).
\end{eqnarray*}
Clearly the inner sum vanishes if $r_1x_1^{-1} \not \equiv
r_2x_2^{-1} \pmod m$
and is equal to $m$ otherwise. Therefore
\begin{equation}
\label{eq: sum E_j}
\sum_{a =1}^m |\Err_{a,m,j}(X,Y)|^2 \ll e^{-2j} Y^2   m T_j ,
\end{equation}
where $T_j$ is the number of solutions to the congruence
$$
r_1x_2 \equiv  r_2x_1 \pmod m, \qquad
r_1,r_2 \in \cR_j, \ x_1, x_2  \le X, \
\gcd(x_1x_2,m)=1.
$$
We now see that if $r_1$ and $x_2$ are fixed,  then $r_2$ and $x_1$ are such
that their product $s = r_2 x_1 \ll e^j mX/Y$ belongs to a
prescribed residue class modulo $m$. Thus there are at most $O\(e^jX/Y + 1\)$
possible values of $s$  and
for each fixed $s \ll e^j mX/Y$ there are $\tau(s) = m^{o(1)}$ values of
   $r_1$ and $x_2$ with $s = r_1 x_2$, see~\eqref{eq:tau}.
Therefore
$$
T_j \le X \# \cR_j \(e^jX/Y + 1\) m^{o(1)} =
\frac{e^{2j}X^2m^{1+o(1)}}{Y^2}+  \frac{e^{j}Xm^{1+o(1)}}{Y}
$$
and after substitution into~\eqref{eq: sum E_j}
we get
$$
   \sum_{a =1}^m |\Err_{a,m,j}(X,Y)|^2 \ll e^{-2j} Y^2   m T_j =
X^2m^{2+o(1)} + e^{-j}XY m^{2+o(1)} .
$$
Substituting  this bound in~\eqref{eq: E and Ej}  and
recalling~\eqref{eq:H and E}, we conclude the proof.
\end{proof}

We note that the proof of Theorem~\ref{thm:UD Aver}
can easily be extended to
arbitrary sets  $\cX \subseteq \{1, \ldots, X\}$, see~\cite{Shp6}.
However, it  breaks down
if $x$ runs through a short interval away from the origin.
An alternative approach has been suggested in~\cite{GarGar} and is
based on bounds of the fourth moment of multiplicative 
character sums,
see  Lemma~\ref{lem:4th Moment}.
If $m=p$ is prime, it 
can handle such shifted intervals $\cX = \{U+1, \ldots, U+X\}$
(but  not arbitrary sets 
$\cX \subseteq \{1, \ldots, X\}$ as that of~\cite{Shp6}).  
Furthermore, the technique of~\cite{GarGar} leads to more explicit 
expressions instead of $m^{o(1)}$ in the error term. 
Thus, although the approaches of~\cite{GarGar} and~\cite{Shp6} complement
each other they still leave some natural open questions.

\begin{question}
\label{quest:Shift Int} Extend Theorem~\ref{thm:UD Aver} to sets
$\cX = \{U+1, \ldots, U+X\}$ with arbitrary $U$.
\end{question}

As we have mentioned, if $m=p$, Question~\ref{quest:Shift Int}
is addressed in~\cite{GarGar}, however  some of the necessary
ingredients are not known for composite $m$. 

In~\cite{Chan5} the behaviour of $\# \cH_{a,m}(\cX,\cY)$ 
has been  studied for the sets 
 $\cX = \{U+1, \ldots, U+X\}$ and $\cY = \{V+1, \ldots, V+Y\}$
 on average over $U$ and $V$. It is shown in~\cite{Chan5} that
$\# \cH_{a,m}(\cX,\cY)$  is close to its expected value $XY \varphi(m)/m^2$
for almost all $U,V \in \Z_m$ provided that 
$X,Y \ge m^{1/2 + \varepsilon}$ for some fixed 
$ \varepsilon > 0$.   Furthermore, one can also find 
in~\cite{Chan5} a similar statement for a multidimensional
analogues of   $\# \cH_{a,m}(\cX,\cY)$. Several more 
results of this kind  have been given in~\cite{GoKrSo}, 
however in the setting of~\cite{GoKrSo}  the initial 
point of only one interval is
``sliding'', while the other one is fixed, see also~\cite{BrHa}.

We also note that several results ``on average'' related
to various modifications
of the {\it Lehmer problem\/} are given 
in~\cite{LiuZha1,Wei,Xu,XuZha1,XuZha2,YuYi,ZhaXue,Zha6,ZXY}, 
see also Section~\ref{sec:Lehm}.

Finally, we remark that $\# \cH_{a,m}(\cX,\cY)$
has been studied in~\cite{CoVaZa1} for the same sets
as in Theorem~\ref{thm:UD Gen}, that is, for
$\cX = \{1, \ldots, X\}$ and $\cY = \{V+1,  \ldots, V+Y\}$,
but on average over $V$.  It is shown in~\cite{CoVaZa1}
that in this case one can also
obtain stronger bounds than that of Theorem~\ref{thm:UD Gen}.

\subsection{Points on $\cH_{a,m}$ in Sets with Arithmetic Conditions}
\label{sec:Arith Set}

Theorems~\ref{thm:UD Gen} and~\ref{thm:UD Aver} consider the case
when $x$ and $y$ belong to sets of consecutive integers. However, 
studying points on  $\cH_{a,m}$ in other sets is of ultimate
interest as well.

We start with a very simple observation that no general result
of the type of Theorems~\ref{thm:UD Gen} and~\ref{thm:UD Aver}
applying to arbitrary sets $\cX$ and $\cY$ is possible (even
for very massive sets  $\cX$ and $\cY$). For example, if $m = p$
is a prime and $\cX=\cY$ consist of all $(p-1)/2$ quadratic
residues modulo $p$, then $\cH_{a,p}(\cX,\cY) = \varnothing$
for every quadratic nonresidue $a$.

The problem of distribution of pairs of primes $(p,q) \in \Hba$
has been considered in~\cite{EOS}. Unfortunately, it seems
that even the {\it Extended Riemann Hypothesis\/} is not
powerful enough to get a satisfactory answer to this
question, see~\cite{EOS} for details.
However, on average over both $a$ and $m$ this problem
becomes more feasible and has actually been considered 
in~\cite{FrKuShp}. More precisely, let
$P(X;m,a)$ be
the number of solutions to the congruence
$$
p_1 p_2 \equiv a \pmod m
$$
in primes $p_1, p_2\le X$. It is shown in~\cite{FrKuShp} that 
for any sufficiently large $M$,
$$
\sum_{M<m \le 2M}\sum_{\substack{a=1\\ \gcd(a,m)=1}}^m \(P(X;m,a) -
\frac{\pi(X)^2}{\varphi(m)}\)^2 \ll X^4(\log X)^{-A}+ MX^2 ,
$$
for any $A$, with an implied constant that depends on $A$,
and
$$
\sum_{M<p \le 2M}\sum_{ a=1}^{p-1} \(P(X;p,a) -
\frac{\pi(X)^2}{p-1}\)^2 \ll \(M^{-1}X^4 + MX^2\)(\log X)^{-2}\ .
$$
The number $S(X;m,a)$  of solutions to the congruence
$$
s_1 s_2 \equiv a \pmod m
$$
in squarefree positive integers $s_1, s_2\le X$ is certainly 
easier to study and in this case ``individual''
results are possible. For example, it is shown in~\cite{FrKuShp} 
that for all integers $m\ge 1$ and $a$ with $\gcd(a,m) =1$
and real positive $x$, we have 
\begin{equation}
\label{eq:SF prod}
S(X;m,a) = \frac{36}{\pi^4} \cdot \frac{X^2}{m} \prod_{p\mid m} \(1 +
\frac{1}{p}-\frac{1}{p^2}+ \frac{1}{p^3}\)^{-1} + O\(X m^{-1/4 + o(1)}\). 
\end{equation}

Moreover,    representations of residue 
classes by products of a squarefree number and a
prime  have also been studied~\cite{FrKuShp}
(for a fixed modulus but on average over residue classes). 

The result of~\cite{FrShp} on the distribution of values of the
Euler function in residues classes is based on studying congruences 
with  products
of shifted primes  
$$
(p_1-1)(p_2-1)(p_3-1) \equiv a \pmod  m
$$ 
and employing  bounds 
of multiplicative character sums with shifted primes 
from~\cite{Kar0,Rakh}; see also~\cite{FrLuc} for some related 
results on some  residue classes modulo $m$ which are ``hard''
to represent by a totient.

In~\cite{Gar3} an improvement of some results of~\cite{FrShp} has been
obtained. This new idea of~\cite{Gar3} is to use a
bound of~\cite{Hux} on the number of large values of 
Dirichlet polynomials (see also~\cite[Chapter~9]{IwKow}),
and can probably be applied to a
number of other questions. For example, one of such
applications to the distribution of points on  multivariate 
modular hyperbolas is given in~\cite{Gar6}. 

It is shown in~\cite{FrKuShp} that there are two 
absolute constants $\eta,\kappa>0$
such that for any prime $p$ and integer $X$ 
with $p > X \ge p^{1-\eta}$, if $\gcd(ak,p)=1$ then the congruence
$$
p_1p_2(p_3+k) \equiv a \pmod p
$$ 
has $(1 + O(p^{-\kappa}))\pi(X)^3/p$ solutions in 
primes $p_1, p_2, p_3 \leq X$ (uniformly over $a$ and $k$).  
It is shown in~\cite{Gar6}, using a result 
of~\cite{Hux}, that one can take any $\eta < 13/76$. 

In~\cite{Gar5}, for $X \le p$,  the bound
\begin{equation}
\label{eq:recipr prime inviv}
\max_{\gcd(b,p)=1} \left|\sum_{\substack{q \le X\\q~\mathrm{prime}}} \ep(bq^{-1})\right|
\ll \(X^{15/16} + X^{2/3}p^{1/4}\)p^{o(1)}
\end{equation}
has been obtained
for exponential sums of reciprocals of primes, which is
in this special case   an improvement of a more general
result of~\cite{FoMi}. In turn, this
has led to an improvement of the result of~\cite{FrKuShp}
for the number of solutions of the congruence
\begin{equation}
\label{eq:3 prime cong}
p_1(p_2+p_3) \equiv a \pmod p
\end{equation}
in primes $p_1, p_2, p_3 \leq X$. More precisely, in~\cite{Gar5}, 
the asymptotic formula 
$(1 + O(p^{-\kappa}))\pi(X)^3/p$ for the number of solutions
is shown to hold for $p\ge X  \ge p^{16/17 + \varepsilon}$, 
where $\kappa > 0$ depends only on $\varepsilon>0$. Note that the exponent 
$16/17$ improves the previous exponent $38/39$ of~\cite{FrKuShp} that is
based on the estimate of~\cite{FoMi}. Using the bound of~\cite{FoShp1}
on  the average values
of such  exponential sums
\begin{equation}
\label{eq:recipr prime aver}
\sum_{Z \le p \le 2Z} \max_{\gcd(b,p)=1} \left|\sum_{\substack{q \le X\\q~\mathrm{prime}}} \ep(bq^{-1})\right|
\ll \(X^{3/5}Z^{13/10} + X^{5/6}Z^{13/12}\)Z^{o(1)}, 
\end{equation}
where $X \le Z$,
for almost all primes $p$, one can obtain a similar result 
for $p\ge X  \ge p^{13/14 + \varepsilon}$. 
We note that the both bounds~\eqref{eq:recipr prime inviv} 
and~\eqref{eq:recipr prime aver} are nontrivial for 
$X \ge p^{3/4+\varepsilon}$ for some fixed $\varepsilon>0$.
The bound of~\cite[Theorem~4.1]{Bour1} is nontrivial for 
$X \ge p^{1/2+\varepsilon}$, however it is less explicit.
Recently, an explicit 
version of the results of~\cite{Bour1} has been given in~\cite{Bak}
how it is not immediately clear whether it can lead to new results about the 
congruence~\eqref{eq:3 prime cong} (except for the case where
the variables are in the domain $p_1\le X$, $p_2,p_3 \le Y$
with $X$ substantially smaller than $Y$). 

Furthermore, yet another approximation 
to the initial problem has been considered in~\cite{Shp15}.
Combining  a generalisation of the bound~\eqref{eq:recipr prime inviv} 
to composite denominators (given in~\cite{FoShp1}) with the sieve
method, it is shown in~\cite{Shp15} that for a sufficiently 
large $m$ and any $a$ with $\gcd(a,m)=1$
there is a solution to the congruence 
$$
pP_{17} \equiv a \pmod m, \qquad p,P_{17} \le m^{0.997}, 
$$
where $p$ is prime and $P_{17}$ has at most 
$17$ prime divisors.

One can also study the distribution of
points $(x,y) \in  \cH_{a,m}(\cX,\cY)$ with some prescribed
structure of prime factors. For example, let $P_+(k)$ and
$P_-(k)$ denote the largest and the smallest prime divisors
of an integer $k \ge 1$, respectively. 

\begin{question}
\label{quest:P+/P_ sols}
Obtain an asymptotic formula for
$$
\# \{(x,y) \in  \cH_{a,m}(\cX,\cY) \ :\  P_+(xy) \le R\}
$$
and
$$
\# \{(x,y) \in  \cH_{a,m}(\cX,\cY) \ :\  P_-(xy) \ge r\}
$$
where  $\cX = \{1, \ldots, X\}$ and $\cY = \{1,
\ldots, Y\}$ and  $ 1 \le X,Y \le m$ are arbitrary integers
with $R$ and $r$ in reasonably large ranges.
\end{question}

We remark that   using  elementary sieving arguments
one can extend the result of Theorem~\ref{thm:UD Gen}
to counting  $(x,y) \in \cH_{a,m}$ such that $x$ is
of the largest possible multiplicative order modulo $m$
(and thus so is $y$)
which is given by the {\it Carmichael function\/} $\lambda(n)$. 
In particular, when $m=p$ is a prime, this addresses the problem of
distribution of the points 
$(x,y) \in \cH_{a,m}$ where $x$ is a primitive
root modulo $p$, for example, see~\cite{BeKh,KhShp}.
Proofs of these results usually follow the same
standard lines as the proof
of Theorem~\ref{thm:UD Gen}, except that instead
of~\eqref{eq:Kloost bound} one uses the bound of the same strength
on Kloosterman sums~\eqref{eq:Kloost} 
twisted with multiplicative characters
$$
 \sum_{\substack{(x,y) \in \cH_{m} \\ 1 \le x,y \le m}}
\chi(x) \em\(rx + sy\) 
\ll \(m \gcd(r,s,m)\)^{1/2+o(1)}.
$$
There are still some delicate issues of getting the
$m^{o(1)}$ term as small as possible.   
A certain modification of this question has 
been studied in~\cite{ZheCoch}. 

We also note that several very interesting results
have recently been obtained in~\cite{Mosh1}
about points  $(x,y) \in \Hba$ such that $x$ and $y$ have
restricted $g$-ary expansions to some fixed base $g\ge 2$, 
see also~\cite{MoshShkr}. 

Finally, we note that the dual question about the arithmetic structure 
of the ratio $(xy - a)/m$ for $(x,y) \in \cH_{a,m}$ is also
of interest and may have various applications.
For example, in~\cite{Stew} such ratios without 
small prime divisors have been studied, which in 
turn has been important for yet another number theoretic question.

\section{Geometric Properties of $\cH_{a,m}$}

\subsection{Distances}
\label{sec:Dist}

We observe that the asymptotic formulas~\eqref{eq:Dist Moments}
and~\eqref{eq:Dist Distr} have a natural interpretation as the
bounds on the power moments and the distribution function
of the distances between an
element $x \in \{1, \ldots, m\}$ with $\gcd(x,m)=1$
and its modular inverse. Several results about the
average (over $a$) value  of power moments of distances
can be found in~\cite{LiuZha1,LiuZha2,LiuZha3,LiuZha4,LiuZha5,LiuZha6}, 
see also references therein.

We now define the {\it width\/} $w_{a,m}$ of the set $\Hba$:
$$
w_{a,m} =
\max \left\{  |x-y|  \ : \
(x,y) \in \Hba \right\}.
$$
We also put
$$
w_{m} = w_{1,m}
$$
for the  width  of $\Hb1$,
which has been the main object of study of~\cite{FKSY,Khan1,KhShp}.

Using Theorem~\ref{thm:UD Gen} one easily derives that
\begin{equation}
\label{eq:Large Dist}
w_{a,m} = m + O(m^{3/4 + o(1)}).
\end{equation}
Using the same arguments as in~\cite{KhShp}, one can
obtain a more precise expression for the factor $m^{o(1)}$.

On the other hand, it has been noticed in~\cite{Khan1} that
$$
m-w_{m}\geq \rf{2\sqrt{m-1}}
$$
with equality for all $m$ of the
form
\begin{equation}
\label{eq:Spec form}
m = k^2+\ell k+1
\end{equation}
with integers $k$ and $\ell$ such that
$k>0$, $0\leq \ell<2\sqrt{k}+1$ and hence
\begin{equation}
\label{eq:liminf}
\liminf_{m\to\infty}\frac{m-w_{m}}{\sqrt{m}}=2.
\end{equation}

\begin{question}
\label{quest:Special prime}
Show that there are infinitely many primes $m = p$
of the form~\eqref{eq:Spec form} with $0\leq \ell<2\sqrt{k}+1$.
\end{question}

As a curiosity, we recall the following,  it has  been noted in~\cite{FKSY},
that $m-w_{m} \le \sqrt{8m}$ for all positive integers
$m = 2^s$ with $s\in \Z$.
Indeed, if $s$ is even,  then $m = (2^{s/2}-1)^2 + 2( 2^{s/2}-1) + 1$ 
is of the form~\eqref{eq:Spec form}.
If $s$ is odd,  then  this follows from
$$
(2^{(s+1)/2}-1)(2^s-2^{(s+1)/2}-1) \equiv 1 \pmod {2^s}.
$$
In the opposite direction it is shown in~\cite{FKSY} that
\begin{equation}
\label{eq:limsup}
\limsup_{m\to\infty}\frac{m-w_{m}}{\sqrt{m}}=\infty.
\end{equation}
Furthermore, analogues of~\eqref{eq:liminf} and~\eqref{eq:limsup}
also hold for prime values $m=p$:
$$
\liminf_{p\to\infty}\frac{p-w_{p}}{\sqrt{p}}=2\mand
\limsup_{p\to\infty}\frac{p-w_{p}}{\sqrt{p}}=\infty,
$$
which follow from the following two results given in~\cite{FKSY}.

\begin{theorem}
For infinitely many primes $p$, we have
$$
p-w_{p} \le 2\sqrt{p} + \frac{\sqrt{p}}{\log p}.
$$
\end{theorem}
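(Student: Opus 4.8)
The plan is to exploit the elementary dictionary between large distances on $\overline{\cH}_p$ and balanced factorisations of $p-1$. If $p-1=de$ with $1\le d\le e$ and $d+e\le p$, then $d(p-e)\equiv -de\equiv -(p-1)\equiv 1\pmod p$ produces a point $(d,p-e)\in\overline{\cH}_p$ at distance $|d-(p-e)|=p-(d+e)$, so that
$$
p-w_p\le d+e .
$$
Since $(d+e)^2=4de+(e-d)^2=4(p-1)+(e-d)^2$, the elementary inequality $\sqrt{1+t}\le 1+t/2$ shows that any factorisation with $|e-d|\le 2\sqrt{p/\log p}$ already satisfies $(d+e)^2\le 4p(1+1/\log p)$, hence $d+e\le 2\sqrt p+\sqrt p/\log p$ (which is in particular $<p$, validating the step above). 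As $e=(p-1)/d$ behaves like $2\sqrt{p-1}-d$ for $d$ near $\sqrt{p-1}$, the condition $|e-d|\le 2\sqrt{p/\log p}$ is the same as $\sqrt{p-1}-d$ being at most of order $\sqrt{p/\log p}$. Thus it suffices to produce, for infinitely many primes $p$, a divisor $d$ of $p-1$ lying in a short window of length about $\sqrt{p/\log p}$ just below $\sqrt{p-1}$.

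Next I would recast this multiplicatively. A divisor $d\mid p-1$ with $d\le\sqrt{p-1}$ and $\sqrt{p-1}-d$ in the prescribed range is exactly a prime $p\equiv 1\pmod d$ lying in a window $d^2<p\le d^2+c\,d^2/(\log d)^{1/2}$ for a suitable constant $c>0$: the bound $d\le\sqrt{p-1}$ forces $p>d^2$, while the closeness of $d$ to $\sqrt{p-1}$ forces $p$ to lie within a factor $1+O((\log d)^{-1/2})$ of $d^2$. Summing over primes $p\le 2X$ and divisors $d\in(\sqrt X,\sqrt{2X}\,]$, it therefore suffices to show that the number of pairs $(d,p)$ with $p$ prime, $p\equiv 1\pmod d$, and $p$ in the associated window tends to infinity as $X\to\infty$.

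Finally I would compare this count with its expected main term. For each admissible $d$ the window has length of order $X/(\log X)^{1/2}$, and primes $\equiv 1\pmod d$ there occur with density $1/(\varphi(d)\log X)$; since $\sum_{\sqrt X<d\le\sqrt{2X}}1/\varphi(d)=O(1)$ over this range, the heuristic main term is of order $X/(\log X)^{3/2}$, which tends to infinity and would furnish the desired primes. The decisive and genuinely hard step is to justify this main term by bounding, uniformly, the error $\sum_d|\pi(\text{window};d,1)-\text{main}(d)|$ as $d$ runs up to (and a little beyond) $\sqrt X$ in the fixed residue class $a=1$ and over short intervals. This sits exactly at the boundary of the Bombieri--Vinogradov range, and I expect it to be the crux of the argument: the required equidistribution of primes in progressions to moduli near $\sqrt X$ for the single class $1$ is precisely what the deep results of~\cite{BFI} supply, and it is their applicability to this fixed class that makes balanced factorisations of $p-1$ available infinitely often.
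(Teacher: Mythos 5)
Your proposal follows essentially the same route as the paper: both reduce the statement to producing infinitely many primes $p$ for which $p-1$ has a divisor $d$ very close to $\sqrt{p}$, via the observation that writing $p-1=de$ gives $d(p-e)\equiv 1\pmod p$ and hence $p-w_p\le d+e$, and both defer the existence of such primes to the results of~\cite{BFI} on primes in arithmetic progressions to large moduli. Your identity $(d+e)^2=4(p-1)+(e-d)^2$ is a mild refinement that widens the admissible window for $d$ around $\sqrt{p}$ from relative size $1/\log p$ (as used in the paper) to $1/\sqrt{\log p}$, but the substance and the key input are the same.
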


\begin{proof} Let $\eps=1/(4\log 	Q)$. Using~\cite{BFI} one can 
show that for
sufficiently large $Q$, there is a prime $p$ in the interval $((1-\eps)Q,Q]$ such
that $p-1$ has a divisor $d$ in the interval
$((1-2\eps)\sqrt{Q},(1-\eps)\sqrt{Q}]$. If we write $p-1=df$, then $w_{p} \ge
p-f-d$.  But, if $Q$ is so large that $\eps \le 0.01$, then
$$
f+d = \frac{p-1}{d}+d \le \frac{x}{(1-2\eps)\sqrt{Q}}+(1-\eps)\sqrt{Q}
\le (2+3\eps) \sqrt{p},
$$
which implies the desired result.
\end{proof}

\begin{theorem}
\label{thm:Diam} Let $f(M)$ be any positive function tending
monotonically to
    zero as $M\to \infty$.
Then the  inequality
$$
m-w_{m} \ge m^{1/2} (\log m)^{\kappa/2} (\log\log m)^{3/4} f(m) $$
holds:
\begin{itemize}
\item for  all positive integers $m \le M$, except for possibly $o(M)$ of
them,

\item for all primes $m = p \le M$ except for possibly  $o(M/\log M)$ of
them.
\end{itemize}
\end{theorem}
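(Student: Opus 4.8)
The plan is to reduce the width estimate to a statement about integers in an arithmetic progression having a divisor in a prescribed interval, and then to feed this into Lemma~\ref{lem:div int}.

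First I would reformulate the width. If a pair $(x,y)$ with $xy\equiv 1\pmod m$ and $0\le x,y\le m-1$ realises a large value of $|x-y|$, then one of $x,y$ is small and the other is close to $m$; writing the small one as $a$ and the complement of the large one as $b=m-y$, the congruence $xy\equiv 1$ becomes $ab\equiv -1\pmod m$, while $|x-y|=m-(a+b)$. Pairing a solution $(a,b)$ with $(m-a,m-b)$ (which has the complementary sum $2m-(a+b)$) one checks that
$$
m-w_{m}=\min\{a+b\ :\ a,b\ge 1,\ ab\equiv -1\pmod m\}.
$$
Hence $m-w_{m}<L$ holds exactly when $sm-1=ab$ for some integer $s\ge 1$ and some $a,b\ge 1$ with $a+b<L$; equivalently, $sm-1$ has a divisor in the interval $\left(\tfrac12(L-\Delta),\tfrac12(L+\Delta)\right)$ with $\Delta=\sqrt{L^{2}-4(sm-1)}$. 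In particular this forces $sm-1<L^{2}/4$, so only the range $1\le s\le L^{2}/(4m)$ is relevant.

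Second, set $L^{*}(m)=m^{1/2}(\log m)^{\kappa/2}(\log\log m)^{3/4}$ with $\kappa$ as in~\eqref{eq:kappa}. Since $f(m)\to 0$, it suffices to prove that for each fixed $T>0$ one has $\#\{m\le M: m-w_{m}<T L^{*}(m)\}\le c(T)\,M\,(1+o(1))$ with $c(T)\to 0$ as $T\to 0^{+}$ (and the analogous bound $c(T)M/\log M$ over primes): indeed, given $f$ and $\varepsilon>0$, pick $T$ with $c(T)<\varepsilon$ and note that $f(m)<T$ for all large $m$, so the exceptional set for $f$ is contained, up to $O(1)$ terms, in the exceptional set for $T$, whence its upper density is at most $c(T)<\varepsilon$; letting $\varepsilon\to 0$ gives the claimed $o(M)$.

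Third is the core estimate. Fix $T$, put $L=TL^{*}(M)$, so that the admissible multipliers $s$ satisfy $1\le s\le S_{0}\asymp T^{2}(\log M)^{\kappa}(\log\log M)^{3/2}$, which is at most $\log M$ for large $M$ and so fits the hypotheses of Lemma~\ref{lem:div int}. A union bound over $s$ bounds the number of bad $m\le M$ by $\sum_{s\le S_{0}}\#\{m\le M: sm-1\ \text{has a divisor in its window}\}$, and each inner count is, after a dyadic splitting in $m$ and a geometric splitting of the window into short intervals admissible for the lemma, controlled through $H(M,y,z;\cT_{s})$. The decisive point is that the multiplicative length of the window depends only on $\theta=4(sm-1)/L^{2}\approx s/S_{0}$, so that $u\asymp(\log M)^{-1}$ up to a factor depending on $\theta$, and $H\ll M\,\tfrac{s}{\varphi(s)}u^{\kappa}(\log(1/u))^{-3/2}\ll M\,\tfrac{s}{\varphi(s)}(\log M)^{-\kappa}(\log\log M)^{-3/2}$. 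Summing over $s\le S_{0}$ and using $\sum_{s\le S_{0}}s/\varphi(s)\asymp S_{0}$, the factor $(\log M)^{-\kappa}(\log\log M)^{-3/2}$ is cancelled exactly by $S_{0}\asymp T^{2}(\log M)^{\kappa}(\log\log M)^{3/2}$, leaving a total of order $T^{2}M$; this is precisely why the exponents $\kappa/2$ and $3/4$ are built into $L^{*}$, and it yields $c(T)=O(T^{2})$.

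Finally, for the prime statement I would repeat the argument verbatim with $\cT_{s}$ replaced by the sequence $\cU_{s}=\{ps-1:\ p\ \text{prime}\}$ and with the counting variable $x$ of Lemma~\ref{lem:div int} taken to be $\pi(M)\sim M/\log M$; the same cancellation then bounds the number of exceptional primes by $c(T)\pi(M)=o(M/\log M)$. The main obstacle is this third step: one must perform the dyadic-in-$m$ and geometric-in-window decompositions uniformly in $s$, verify the hypotheses $100\le y\le x^{0.51}$, $1.1y\le z\le y^{1.1}$ and $1\le s\le\log x$ in every block, and confirm that the contributions of the unbalanced windows (small $\theta$) stay summable. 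It is exactly here that the fine structure of Ford's theorem, packaged in Lemma~\ref{lem:div int}, is indispensable, and where the precise exponents $\kappa/2$ and $3/4$ are forced.
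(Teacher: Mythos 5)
Your proposal follows essentially the same route as the paper's proof: both reduce the width via the factorisation $x(m-y)=km-1$ and the AM--GM inequality to the question of whether $km-1$ has a divisor near $\sqrt{km}$, and both conclude by summing the bounds of Lemma~\ref{lem:div int} over the multipliers $k\le z^2+1\ll(\log M)^{\kappa}(\log\log M)^{3/2}$. The one caveat is that your intermediate inequality $u^{\kappa}\ll(\log M)^{-\kappa}$ silently discards a factor of order $(\log(S_0/s))^{\kappa}$, which is unbounded for small $s$; this is harmless because that factor remains summable against $s/\varphi(s)$ over $s\le S_0$, which is exactly the ``simple calculation'' the paper invokes at the end.
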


\begin{proof}
Let $M$ be large and set
$$
z = (\log M)^{\kappa/2} (\log\log M)^{3/4} f(M/2).
$$
It suffices to show $m-w_{m} \le zm^{1/2}$ for $o(M)$ of  integers $m$
    between $M/2$ and $M$.
Without loss of generality, suppose $f(M) \ge 1/\log\log M$ for all
$M>10$. We define $\cJ_k $ to be the set of positive integers $m\in
(M/2,M]$ for which $m-w_{m} \le ym^{1/2}$  and such that 
there are $(x,y) \in \Hb1$ with  $w_m= y-x$ and $x(m-y) = km - 1$.

By the arithmetic-geometric mean inequality, for every
$m \in \cJ_k$, we have 
\begin{equation} \label{eq:LowerBound}
\frac{m-w_{m}}{2}=\frac{m-y+x}{2}\geq\sqrt{x(m-y)}=\sqrt{km-1}.
\end{equation}
Thus $\cJ_k = \varnothing$ for $k \ge z^2+1$.
Suppose $1\le k < z^2+1$,
  $m\in \cJ_k$, $(x,y) \in \Hb1$ and $x(m-y) = km - 1$.
Then $$
\sqrt{kM/2-1} \le \max(x,m-y) \le z\sqrt{M}. $$
By Lemma~\ref{lem:div int},
$$ \# \cJ_k \le
H(M,\sqrt{kM/2-1},z\sqrt{M};\cT_k) \ll \frac{kM(\log
(3z^2/k))^\kappa}
{\varphi(k)(\log M)^{\kappa} (\log\log M)^{3/2}} $$
which after simple calculations leads to the estimate 
$$ \sum_{1\le k < z^2+1} \# \cJ_k  = o(M)
$$
and proves the first part of the theorem.

The proof of the second part is completely analogous.
\end{proof}

The following questions have been formulated as
conjectures in~\cite{FKSY},  where one can also find
some heuristic arguments in their support.

  \begin{question}
Let $g(M)$ be any positive function tending monotonically to
    $\infty$ as $M\to \infty$.
Show that  the inequality
$$
m-w_{m} \le m^{1/2} (\log m)^{\kappa/2} (\log\log m)^{3/4} g(m) $$
holds:
\begin{itemize}
\item for  all positive integers $m \le M$, except for possibly $o(M)$ of
them,

\item for all primes $m = p \le M$ except for possibly  $o(M/\log M)$ of
them. \end{itemize} \end{question}

\begin{question}
\label{quest:dist}
Prove that
$m-w_{m} \le m^{1/2} (\log m)^{\kappa/2+1/2+o(1)}$ for  $m \to \infty$.
\end{question}

Even a weaker form of Question~\ref{quest:dist} with
just $m^{1/2 + o(1)}$ is already of great interest
and would have some interesting applications, see~\cite{KSY}.

Besides the extreme and average values
of the distances between an
element and its modular inverse it is also interesting
to study the number of distinct differences between $x$ and $y$
for $(x,y) \in \Hba$.

\begin{question}
\label{quest:Dist-1}
Estimate the cardinality of the sets
 \begin{eqnarray*}
\cD_{a,m} & = &\{|x-y| \ : \ (x,y) \in \Hba\}, \\
\cD^{\pm}_{a,m}  & = & \{x-y \ : \ (x,y) \in \Hba\}, \\
\cS_{a,m}  & = & \{x+y \ : \ (x,y) \in \Hba\}.
\end{eqnarray*}
\end{question}

It is easy to see that the sets $\cD_{a,m}$ and 
$\cD^{\pm}_{a,m}$ are closely related to each other
due to symmetry of points on $ \Hba$.

Clearly the part of 
Question~\ref{quest:Dist-1} concerning $\cD_{a,m}$ 
is equivalent to
counting $u =0, \ldots, m-1$ for which
the quadratic congruence
$$
x(x+u) \equiv a \pmod m
$$
has a solution $x$ with $1 \le x < m-u$.
In the case of  $\cS_{a,m}$ one needs to count 
$u = 0, \ldots, \pm 2(m-1)$
for which 
$$
x(u-x) \equiv a \pmod m
$$
has a solution $x$ with $1 \le x < u$,

For prime $m=p$ this question has been studied in~\cite{ShpWin1}
where an explicit formula
$$
\# \cD_{a,p} = \frac{1}{4}\(p+1+\(\frac{a}{p}\)
\(1+(-1)^{(p-1)/2}\)\).
$$
is given, where $(a/p)$   denotes the {\it Legendre symbol\/} of $a$
modulo $p$. It has been shown in~\cite{EKSY} that the argument 
of~\cite{ShpWin1} can also be used to derive explicit
formulas for $\cD^{\pm}_{a,p}$ and $\cS_{a,p}$. However 
in the case of composite $m$ studying these quantities is
more complicated and sometimes leads to some surprising 
discoveries. In particular, it is shown 
in~\cite[Theorem~16]{EKSY} that for $a=1$ we have
\begin{equation}
\label{eq:Sum/Diff}
\liminf_{m \to \infty}
\frac{\#\cD^{\pm}_{1,m} \log \log m}{\#\cS_{1,m}} < \infty \mand 
\limsup_{m \to \infty}
\frac{\cD^{\pm}_{1,m}}{\#\cS_{1,m}\log \log m} > 0,
\end{equation}
see also~\cite{Khan2}.
These results also have some interesting combinatorial 
interpretation.
%%, see Section~\ref{sec:Sum/Diff}. 

In~\cite{ShpWin1} an analogue of 
Question~\ref{quest:Dist-1} concerning $\cD^{\pm}_{a,p}$ is also studied
when $x$ and $y$ range over prescribed intervals, however the method 
of~\cite{ShpWin1}   does not immediately apply to the case of composite
moduli $m$.

A similar question can also be asked about Euclidean distances
between distinct  points of $\Hba$ and also between the origin
and points of  $\Hba$.

\begin{question}
\label{quest:Dist-2}
Estimate the cardinality of the sets
$$
\cE_{a,m} = \{\sqrt{(x_1-x_2)^2 +(y_1-y_2)^2}  \ : \ (x_1,y_1),  (x_2,y_2) \in
\Hba\}
$$
and
$$
\cF_{a,m} = \{\sqrt{x^2 +y^2}  \ : \ (x, y) \in \Hba\} .
$$
  \end{question}

The author is grateful to Arne Winterhof for the
observation that if $m = p$ is a prime then $\#\cF_{a,p}$ can be evaluated explicitely. 
Indeed
if  $u \equiv  x^2+y^2 \pmod p$ for $(x, y) \in \overline{\cH}_{a,p}$
then  the congruence $Z^4-uZ^2+a^2\equiv 0 \pmod p$ has exactly four
roots $x, p-x, y, p-y$ (in particular, we have  two double roots
if  $x=y$).
Since $(p-x)^2+(p-y)^2\ne x^2+y^2$ we have
$$
%\begin{equation}
%\label{eq:card F}
\#\cF_{a,p} =  \frac{1}{2}\(p +\(\frac{a}{p}\)\).
%%\end{equation}
$$

Clearly, Theorem~\ref{thm:UD Gen} can be used to obtain some lower
bounds on $\# \cD_{a,m}$ and  $\# \cE_{a,m}$. For example,
an easy modification of an argument which leads to the
bound~\eqref{eq:Large Dist} yields
$$
\# \cD_{a,m} \ge m^{1/4 + o(1)}.
$$
However we are mostly
interested in more precise results which  should
certainly be based  on some additional ideas.

We also ask a question of a different flavour,
which is about the number of possible directions on the Euclidean
plane defined by the pairs of
distinct points $(x_1,y_1),  (x_2,y_2) \in \Hba$.

\begin{question}
\label{quest:Direct}
Estimate the cardinality of the set
$$
\cL_{a,m} = \left\{\frac{x_1-x_2}{y_1-y_2}  \ : \ (x_1,y_1),  (x_2,y_2) \in
\Hba,\ (x_1,y_1) \ne (x_2,y_2)\right\} .
$$
  \end{question}

Obviously Questions~\ref{quest:Dist-2} and~\ref{quest:Direct}
are influenced by the {\it Erd{\H o}s} and
{\it Kakeya\/} problems, respectively,
see~\cite[Sections~5.3 and~7.1]{BMP} and
also surveys~\cite{Bour2,KatTao} (we also note the recent 
remarkable achievement~\cite{Dvir,GuKa}).
They can also be asked for points of  $\cH_{a,m}(\cX,\cY)$
for various sets $\cX$ and $\cY$.

We note that in the definitions of the sets 
$\cE_{a,m}$,  $\cF_{a,m}$ and $\cL_{a,m}$
all distances and angles are computer over the rationals.
The same questions can also be asked with similar quantities 
where calculations are performed modulo $m$. 
Some interesting results in this direction can be
found in~\cite{EKSY,HanKhan,Khan2}.

Finally, motivated by~\cite{Boca2,BoCoZa1} and some 
other works one can also ask various questions
about the distribution of the  angles of elevation 
$\arctan(y/x)$
of points $(x,y) \in \cH_{a,m}(\cX,\cY)$
over the horizontal line.

\subsection{Convex Hull}
\label{sec:Convex}

We consider the convex closure $\cC_{m}$
of the point set $\Hb1$.    It is not hard to see that
$\cC_{m}$  is always a convex polygon with
nonempty interior, except
when $m=2,3,4,6,8,12,24$, see~\cite{KSY}.

Following~\cite{KSY}, we denote by $v(m)$
the number of vertices of $\cC_m$ and
by $V(M)$  its average value,
$$
V(M)=\frac{1}{M-1}\sum_{m=2}^M v(m).
$$

%Using Theorem~\ref{thm:UD Gen}, one can easily derive
%\begin{equation}
%\label{eq:Upper Bound 1}
%v(m) \le m^{3/4 + o(1)}.
%\end{equation}
%Naturally, exactly as Theorem~\ref{thm:UD Gen},
%the bound~\eqref{eq:Upper Bound 1} can be extended
%to the sets of solutions of many other  congruences.

First of all we notice that by a result of~\cite{Andrews}
a convex polygon of area $S$ may have at most $O(S^{1/3})$
integer vertices, thus we immediately conclude
that $v(m) \ll m^{2/3}$. 
It is shown in~\cite{FKS} that a combination of the 
bound of~\cite{Andrews} 
with  Theorem~\ref{thm:UD Gen} leads to a stronger estimate: 
\begin{equation}
\label{eq:Upper Bound 1}
v(m) \le m^{7/12+o(1)}.
\end{equation}
Indeed, one can see from Theorem~\ref{thm:UD Gen} 
that all vertices of $\cC_m$ belong to one of the rectangles 
with one side length $m$ and the other side length $m^{3/4+o(1)}$
which are adjacent  to one of the sides of the square $[0, m-1]\times [0,m-1]$.
Now considering the intersections of $\cC_m$ with  each of these rectangles
(which remains convex) and applying the the result of~\cite{Andrews},
we obtain~\eqref{eq:Upper Bound 1}. The 
bound~\eqref{eq:Upper Bound 1} has recently been improved in~\cite{KonShp}
as 
\begin{equation}
\label{eq:Upper Bound 1.5}
v(m) \le m^{1/2+o(1)}
\end{equation}
for any integer $m\ge 1$ and as 
\begin{equation}
\label{eq:Upper Bound 1.6}
v(m) \le m^{5/12+o(1)}
\end{equation}
for $m$ which are almost squarefree (in particular, for 
almost all $m$). The proof of~\eqref{eq:Upper Bound 1.6} 
is based on some geometric arguments, and thus on Lemmas~\ref{lem:Equiv} 
and~\ref{lem:Pet}, and the bound on the number of
solutions to bivariate quadratic Diophantine equations in a box
given by Lemma~\ref{lem:Quadratic}. 
Furthermore, the same bounds hold for the convex 
hulls of arbitrary hyperbolas $\Hba$.

More interestingly, one can obtain another bound, which is
much better in some cases and relies on  more specific properties
of $\Hb1$
\begin{equation}
\label{eq:Upper Bound 2}
v(m)\le T(m-1) m^{o(1)},
\end{equation}
see~\cite{KSY}, where $T(k)$ is defined in Section~\ref{sec:ANT}.

%\begin{question}
%\label{quest:Diam}
%Improve the generic bound~\eqref{eq:Upper Bound 1} 
%by using some more specific 
%properties of points on  $\Hb1$.  \end{question}

The lower bound
\begin{equation}
\label{eq:Lower Bound}
v(m) \ge 2(\tau(m-1)-1)
\end{equation}
is also given in~\cite{KSY}. 
Furthermore, it is shown in~\cite{KSY} that
in fact $v(m) = 2(\tau(m-1)-1)$ whenever  $T(m-1) \le 5$.
Thus, this and the bound~\eqref{eq:Upper Bound 2}
explain why Lemma~\ref{lem:div dens} comes into play.
On the other hand, it is shown in~\cite{FKS}
that $v(m) > 2(\tau(m-1)-1)$ for a set of $m$ of positive 
asymptotic density. 
On the other hand, improving the previous estimate of~\cite{FKSY},
it is shown in~\cite{FKS} that~\cite[Theorem~2.1]{AGP} can be used to 
derive that 
$$
v(p+1) \ge \exp\(\(\frac{5\log 2 }{12} + o(1)\) \frac{\log p}{\log 
\log p}\).
$$
for infinitely many  primes $p$. In particular this shows,
in a strong form, that  sometimes $v(m)$ and $\tau(m-1)$
are vastly different orders of magnitude. 

One can also find in~\cite{KSY} several efficient algorithms
for computing $v(m)$, together with their complexity analysis.

Numerical calculations show that while the behaviour
of $v(m)$ is not adequately described by any of the above bounds,
the lower bound~\eqref{eq:Lower Bound} seems to be more precise
than~\eqref{eq:Upper Bound 1} and~\eqref{eq:Upper Bound 2}.

It is quite natural  to view the points of $\Hb1$ as being randomly
distributed in the  square $[0,m]\times [0,m]$ (which is supported by the
theoretic results which we have presented in Sections~\ref{sec:Indiv}
and~\ref{sec:Aver}) and then appeal to the following result 
of~\cite[Satz~1]{ReSu}. Let $\cR$ be a convex polygon in the plane with
$r$ vertices and let
$P_i$, $i=1,\ldots ,n$,  be $n$ points chosen  at random in $\cR$
with uniform distribution. Let $X_n$ be the number of vertices of the
convex closure of the  points $P_i$,  and let $E(X_n)$ be the
expectation of $X_n$. Then
\begin{equation} \label{eq:Expected Number}
                E(X_n) = \frac{2}{3}r(\log n + \gamma) + c_\cR + o(1),
\end{equation}
where $\gamma= 0.577215\ldots$ is the Euler constant,
and $c_\cR$ depends on $\cR$ and is maximal when $\cR$ is a
regular $r$-gon or is affine equivalent to a regular $r$-gon.
In particular, for the
unit square $\cR = [0,1]^2$ we have
$$
c_\cR=  - \frac{8}{3}  \log 2.
$$

Using~\eqref{eq:Expected Number} with $r = 4$, it
seems  plausible to conjecture that for most $m$
$$
v(m)   \approx  h(m),
$$
where
$$
h(m) =  \frac{8}{3}  (\log \varphi(m)  + \gamma  - \log 2 ). $$
However, surprisingly enough, the numerical results of~\cite{KSY}
show that $V(M)$ deviates from
$$
H(M) =  \frac{1}{M-1} \sum_{m=2}^M  h(m) = \frac{8}{3}  (\log M + \gamma + \eta  -1 -
\log 2 + o(1)),
$$
where
$$
\eta =  \sum_{p~\mathrm{prime}}  \frac{\log(1-1/p)}{p} = -0.580058\ldots\,,
$$
quite significantly, and is apparently larger than  $H(M)$
by a fixed factor.  Some partial
explanation to this  phenomenon has been given in~\cite{KSY}
and suggests that for each $m$,
the convex hull $\cC_m$, besides  some ``random'' points,
also contains a
``regular'' component with $2(\tau(m-1)-1)$ points  associated with divisors of
$m-1$ whose average contribution
$$
  \frac{1}{M-1}\sum_{m=2}^M 2(\tau(m-1)-1) \sim 2  \log M
$$
is of the same order of magnitude as the size of $H(M)$.

It is also shown~\cite{KSY} that this affect is specific to the
  points of $\Hb1$ and disappears for the convex hull of points on
a ``generic'' curve which behaves in  much better agreement
with~\eqref{eq:Expected Number}  than $v(m)$.

Clearly since $(1,1), (m-1,m-1) \in \Hb1$ the diameter
of $\Hb1$ takes the largest possible value $\sqrt{2}(m-2)$.
However for the other values of $a$ the question about the diameter
of $\Hba$ is
more interesting.

\begin{question}
\label{quest:Diam}
Estimate the diameter
$$
\Delta_{a,m} = \max\{\sqrt{(x_1-x_2)^2 +(y_1-y_2)^2}  \ : \ (x_1,y_1),
(x_2,y_2)
\in
\Hba\}.
$$
  \end{question}

\subsection{Visible Points}
\label{sec:Visible}

For two sets of integers  $\cX$ and $\cY$ we denote
$$
\cG_{a,m}(\cX,\cY) = \# \{(x,y) \in  \cH_{a,m}(\cX,\cY) \ :\ \gcd(x,y) =1\}
$$
and also, following our usual agreement, we put
$\cG_m(\cX,\cY) = \cG_{1,m}(\cX,\cY)$.

Clearly  $\cG_{a,m}(\cX,\cY)$ is the set of points
$(x,y) \in  \cH_{a,m}(\cX,\cY)$ which are ``visible'' from
the origin (that is, which are not ``blocked'' by
other points with integer coordinates).

The following estimate is obtained in~\cite{Shp2} for
$\cG_m(\cX,\cY)$ but
its extension to the general case is immediate and we present it here
(in fact we also simplify the argument).

\begin{theorem}
\label{thm:Visible}  Let $\cX = \{1, \ldots, X\}$ and $\cY = \{1,
\ldots, Y\}$
where $ 1 \le X,Y \le m$ are arbitrary integers.
For all integers $m$ with $XY \ge m^{3/2}$ we have
$$
\# \cG_{a,m}(\cX,\cY)= \frac{6}{\pi^2} \cdot \frac{XY}{m} \prod_{p\mid m} \(1 +
\frac{1}{p}\)^{-1} + O\(X^{1/2} Y^{1/2} m^{-1/4 + o(1)}\),
$$
where the product is taken over all prime numbers $p \mid m$.
\end{theorem}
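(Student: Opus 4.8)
The plan is to remove the coprimality condition by M\"obius inversion and then feed the resulting boxes into Theorem~\ref{thm:UD Gen}. Writing $\mathbf 1[\gcd(x,y)=1]=\sum_{d\mid\gcd(x,y)}\mu(d)$, I would obtain
$$
\#\cG_{a,m}(\cX,\cY)=\sum_{d\ge 1}\mu(d)\,N_d,\qquad N_d=\#\{(u,v):\ u\le X/d,\ v\le Y/d,\ d^2uv\equiv a \pmod m\}.
$$
Since $\gcd(a,m)=1$, the congruence $d^2uv\equiv a$ forces $\gcd(d,m)=1$, so only $d$ coprime to $m$ survive; for those $N_d$ is exactly the number of points of $\cH_{ad^{-2},m}$ in the box $\{1,\dots,\lfloor X/d\rfloor\}\times\{1,\dots,\lfloor Y/d\rfloor\}$, to which Theorem~\ref{thm:UD Gen} applies, and $N_d=0$ once $d>\min(X,Y)$.

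Then I would split the sum at a threshold $D\asymp X^{1/2}Y^{1/2}m^{-3/4}$ (which is $\ge 1$ precisely because $XY\ge m^{3/2}$). For $d\le D$ I apply Theorem~\ref{thm:UD Gen}, replace $\lfloor X/d\rfloor\lfloor Y/d\rfloor$ by $XY/d^2$ (the rounding costs only $O((X+Y)m^{-1}\log D)$ in total, which is negligible), and complete the sum over $d$ coprime to $m$ to infinity. This produces the main term
$$
\frac{\varphi(m)}{m^2}XY\sum_{\substack{d\ge1\\\gcd(d,m)=1}}\frac{\mu(d)}{d^2}=\frac{\varphi(m)}{m^2}XY\cdot\frac{6}{\pi^2}\prod_{p\mid m}\Bigl(1-\frac1{p^2}\Bigr)^{-1}=\frac{6}{\pi^2}\cdot\frac{XY}{m}\prod_{p\mid m}\Bigl(1+\frac1p\Bigr)^{-1},
$$
where the last equality uses $\varphi(m)/m=\prod_{p\mid m}(1-1/p)$. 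The errors introduced here are the $O(m^{1/2+o(1)})$ of Theorem~\ref{thm:UD Gen} summed over $d\le D$, namely $O(Dm^{1/2+o(1)})=O(X^{1/2}Y^{1/2}m^{-1/4+o(1)})$, together with the tail $\sum_{d>D}\mu(d)/d^2=O(1/D)$ of the completed series, contributing $O(XY/(mD))=O(X^{1/2}Y^{1/2}m^{-1/4})$; both are within the claimed bound.

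The main obstacle is the contribution of $d>D$, where Theorem~\ref{thm:UD Gen} is useless because its error dwarfs $N_d$. Here I would estimate $N_d$ directly: counting the actual product $n=d^2uv\le XY$ in the progression $n\equiv a\pmod m$ and using the divisor bound~\eqref{eq:tau} gives $N_d\le (XY/(d^2m)+1)m^{o(1)}$, so $\sum_{d>D}N_d\ll m^{o(1)}\bigl(XY/(mD)+\#\{D<d\le\min(X,Y):N_d\ne0\}\bigr)$. For $D<d\le\sqrt{XY/m}$ the first term is already $X^{1/2}Y^{1/2}m^{-1/4+o(1)}$ and the count of such $d$ is at most $\sqrt{XY/m}\le X^{1/2}Y^{1/2}m^{-1/4}$, so this range is harmless. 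The delicate range is $d>\sqrt{XY/m}$, where the box has area below $m$, so $N_d$ is $0$ or $m^{o(1)}$, and bounding the number of nonzero $N_d$ amounts to showing that few integers $n\le XY$ in the progression $a\pmod m$ carry a square divisor $d^2>XY/m$, i.e. that the residues $ad^{-2}\bmod m$ are equidistributed in short blocks of $d$. I expect to extract this from Lemma~\ref{lem:Exp Sparse}: completing the incomplete sum $\sum_d\em(rad^{-2})$ over a dyadic block of $d$ introduces a linear term, leaving a complete sum in the two monomials $d^{-2}$ and $d$, to which Lemma~\ref{lem:Exp Sparse} (case $s=2$) applies and gives square-root cancellation $m^{1/2+o(1)}$. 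Summed dyadically this yields $X^{1/2}Y^{1/2}m^{-1/2}+m^{1/2+o(1)}$, and the hypothesis $XY\ge m^{3/2}$ is exactly what keeps the stray $m^{1/2+o(1)}$ no larger than the target $X^{1/2}Y^{1/2}m^{-1/4+o(1)}$. Handling this last range cleanly — separating the honest cancellation estimate from the $\gcd(d,m)=1$ bookkeeping in the completion — is where I expect the real work to lie.
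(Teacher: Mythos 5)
Your proposal is correct and follows essentially the same route as the paper's proof: M\"obius inversion over $d\mid\gcd(x,y)$ (with only $\gcd(d,m)=1$ surviving), Theorem~\ref{thm:UD Gen} for $d$ up to $X^{1/2}Y^{1/2}m^{-3/4}$, the divisor-bound count $N_d\le(XY/(d^2m)+1)m^{o(1)}$ in the middle range up to $X^{1/2}Y^{1/2}m^{-1/2}$, and Lemma~\ref{lem:Exp Sparse} applied to the two monomials $d^{-2}$ and $d$ to control the large-$d$ range dyadically. The thresholds, the evaluation of the singular series $\sum_{\gcd(d,m)=1}\mu(d)/d^2$, and the error accounting all match the paper's argument.
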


\begin{proof} For an integer $z$ we let
$$
\cH_{a,m}(z;\cX,\cY)  = \{(x,y) \in \cH_{a,m}(\cX,\cY) \ : \
z \mid \gcd(x,y)\} .
$$
By the inclusion-exclusion principle, we write
$$
\# \cG_{a,m}(\cX,\cY)= \sum_{z=1}^\infty \mu(z) \# \cH_{a,m}(z;\cX,\cY) .
$$

Clearly
$$
\cH_{a,m}(z;\cX,\cY)  = \emptyset
$$
  if $\gcd(z,m)>1$ or $z > m$.
For $\gcd(z,m) =1$, writing
\begin{equation}
\label{eq:Div d}
x = zs\mand y = zt,
\end{equation}
we have
\begin{eqnarray*}
\lefteqn{\cH_{a,m}(z;\cX,\cY)  =
\{(zs,zt) \ : \ st \equiv az^{-2} \pmod m, }\\
&&  \qquad \qquad\qquad \qquad\qquad \qquad\qquad  1
\le s
\le X/z,\ 1 \le t \le Y/z\}.
\end{eqnarray*}

Now, we define
$$
R =  \rf{X^{1/2} Y^{1/2} m^{-3/4}}\mand
Q = \rf{X^{1/2} Y^{1/2} m^{-1/2}},
$$
and note that
$$
XY/Q^2 \le m.
$$

A variant of Theorem~\ref{thm:UD Gen} gives
$$
\# \cH_{a,m}(z;\cX,\cY)  = \frac{XY \varphi(m)}{z^2 m^2} + O\(m^{1/2 + o(1)}\),
$$
which we apply for ``small'' $z\le R$.

We also note that for each $z$, the product $r = st \le XY/z^2$, where $s$ and
$t$ are given  by~\eqref{eq:Div d}, belongs to a fixed
residue class modulo $m$ and thus can take at
most $XY/z^2m + 1$ possible values.
For each fixed $r \le XY/z^2 \le XY \le m^2$, there are $\tau(r) = r^{o(1)} =
m^{o(1)}$ pairs $(s,t)$  of integers
  $s$ and $t$ with $r = st$,  see~\eqref{eq:tau}.
  Therefore,
$$
\# \cH_{a,m}(z;\cX,\cY) \le \(\frac{XY}{z^2m} + 1\)  m^{o(1)},
$$
which we apply for ``medium'' $z$ with $Q \ge z> R$.

We observe that Lemma~\ref{lem:Exp Sparse} yields the bound
$$
\sum_{\substack{z=1\\ \gcd(z,m) = 1}}^m
\em(A z^{-2} + B z) \ll \gcd(A,B,m)^{1/2} m^{1/2 + o(1)}.
$$
Using the same arguments as
in the proof of Theorem~\ref{thm:UD Gen}, we deduce  that for
the number of positive integers $z \le Z$ with
$az^{-2} \equiv w \pmod m$ with some $w \le W$
is $ZW/m + O( m^{1/2 + o(1)})$.

We now remark that $az^{-2}  \equiv r \pmod m$ where, as before,
$$
r = st \le XY/z^2 \le XY/Q^2 \le m.
$$
Furthermore, for every $z$ the value of $r$ is uniquely defined
and leads to  at most $\tau(r) = m^{o(1)}$ possible
pairs $(s,t)$. Hence,
  the total contribution from ``large'' $z\ge Q$ can be estimated
as
\begin{eqnarray*}
\sum_{m \ge z \ge Q}\# \cH_{a,m}(z;\cX,\cY) &\le &\sum_{\nu =0}^{\rf{2\log m}}
\sum_{2^{\nu +1} Q > z \ge 2^\nu Q}\# \cH_{a,m}(z;\cX,\cY)  \\
&\le & \sum_{\nu =0}^{\rf{2\log m}}
\(2^{\nu +1} Q \cdot\frac{XY}{m(2^{\nu}Q)^2} + m^{1/2}\) m^{o(1)}\\
&\le & \sum_{\nu =0}^{\rf{2\log m}}
\(\frac{XY}{2^{\nu}mQ} + m^{1/2}\) m^{o(1)}\\
& = & \frac{XY}{Q} m^{-1+o(1)}+ m^{1/2 + o(1)} .
\end{eqnarray*}

Combining the above bounds and recalling our choice of $R$ and $Q$
(which optimises the error term),  we derive the desired result. 
\end{proof}

In particular, under the conditions of Theorem~\ref{thm:Visible}
$$
\# \cG_{a,m}(\cX,\cY) \sim
\frac{6}{\pi^2} \cdot \frac{XY}{m} \prod_{p\mid m}
\(1 + \frac{1}{p}\)^{-1}
$$
provided that
$XY \ge m^{3/2+\varepsilon}$ for some fixed $\varepsilon>0$.
%However, we
%do not have any approaches to the following  problem.

It is noticed in~\cite{Chan2} that Theorem~\ref{thm:Visible}
maybe of use for some Diophantine approximation questions, 
see Section~\ref{sec:Approx}. 

A multidimensional version of Theorem~\ref{thm:Visible} has been 
derived in~\cite{ShpWin2}. For the dimension $s=3$ it is based
on Theorem~\ref{thm:UD Aver}. For $s \ge 4$ the 
proof is based on various bounds for multiplicative
character sums in the same style as in~\cite{Shp3,Shp11}.

Visible points satisfying general polynomial congruences have
been studied in~\cite{ShpVol}.  However in~\cite{ShpVol} 
nontrivial results have been obtained only ``on average''
(either over some families of congruences or over some 
moduli). Recently, 
a nontrivial result for  ``individual'' congruences has been
given in~\cite{CGOS}.

Using the technique of~\cite{CGOS}, together with Theorem~\ref{thm:Conc bound}
below, one can probably tackle the following problem: 

\begin{question} Extend Theorem~\ref{thm:Visible} to intervals
of the form
$$
\cX = \{U+1, \ldots, U+X\}\mand \cY = \{V+1, \ldots, V+Y\}
$$
with arbitrary $U$ and $V$.
\end{question}

We also refer to~\cite{MakZah} for further generalisations.

\subsection{Concentration of Points}
\label{sec:Cocentr}

For a positive integer $Z < p$ and arbitrary integers $U$ and $V$,
we denote by $T_{a,p}(Z;U,V)$ the number of
points $(x,y) \in \cH_{a,p}$ which belong to the square $[U+1,U+Z]\times [V+1,V+Z]$.

We remark that Theorem~\ref{thm:UD Gen} implies that $T_{a,p}(Z;U,V) = (1
+o(1)) Z^2/p$ if  $Z\ge p^{3/4+\varepsilon}$  for a 
fixed $\varepsilon >0$, 
and also gives a nontrivial upper bound $T_{a,p}(Z;U,V) =
o(Z)$ if $Z\ge p^{1/2+\varepsilon}$  and $Z = o(p)$ as $p
\to \infty$. These results seem to be the limit of what can be
achieved within the standard exponential sum techniques and
currently available estimates on incomplete Kloosterman sums.

However, in~\cite{CillGar} improving the previous result 
of~\cite{ChanShp}, the following estimate has been given:

\begin{theorem}
\label{thm:Conc bound}
There exists some absolute constant $\eta>0$ such that
for any positive integer $Z < p$, uniformly over
arbitrary integers $U$ and $V$, we have
$$
T_{a,p}(Z;U,V) \le 
\left\{\begin{array}{rll}Z^{4/3+o(1)}p^{-1/3} + Z^{o(1)} &\text{for any $U$ and $V$}, \\
Z^{3/2+o(1)}p^{-1/2} + Z^{o(1)}&\text{if $U = V$}.\end{array}\right.
$$
\end{theorem}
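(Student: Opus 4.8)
The plan is to reduce the count to a one-dimensional problem and then exploit a quadratic structure hidden in differences of points; throughout write $T=T_{a,p}(Z;U,V)$. First I would assume, after reducing modulo $p$, that $0\le U,V<p$. Since $Z<p$, each residue class occurs at most once in $[U+1,U+Z]$, and for $x$ coprime to $p$ the companion $y\equiv ax^{-1}\pmod p$ is uniquely determined, so $T$ equals, up to the single possible $x\equiv 0\pmod p$, the number of $x\in[U+1,U+Z]$ whose image $ax^{-1}$ lands in $[V+1,V+Z]$. For two such points write $x_2=x_1+s$, $y_2=y_1+t$ with $|s|,|t|<Z$; subtracting $x_1y_1\equiv x_2y_2\pmod p$ and using $y_1\equiv ax_1^{-1}$ gives
$$
t\,x_1^2+st\,x_1+as\equiv 0\pmod p .
$$
Thus each nonzero difference $(s,t)$ pins the base point to a quadratic congruence, so it is shared by at most two points, and $t\neq0$ whenever $s\neq0$ because $\gcd(a,p)=1$. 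This already yields the trivial bound $T\ll Z$, and the whole difficulty is to show that the difference vectors which \emph{actually} occur form a much thinner set than all of $(-Z,Z)^2$.

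The key step, and the way around the obstruction that the box may sit far from the origin (so that the products $xy$ run over a long interval and the naive divisor bound only recovers $T\ll Z$), is to eliminate the large base coordinates. Taking a base point $(x_0,y_0)$ and the displacements $(u_i,v_i)=(x_i-x_0,y_i-y_0)\in(-Z,Z)^2$ of the other points, the relation above reads $v_i x_0^2+u_iv_i x_0+au_i\equiv0\pmod p$; eliminating $x_0$ between two displacements and invoking $x_0y_0\equiv a$ produces an \emph{origin-free} congruence
$$
u_1u_2v_1v_2(u_1-u_2)(v_1-v_2)\equiv -a\,(u_1v_2-u_2v_1)^2\pmod p
$$
in the small variables $u_1,v_1,u_2,v_2$, with no dependence on $U,V$ at all. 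This is exactly the translation that turns the far-from-origin case into a counting problem inside a fixed box $(-Z,Z)^4$, and the number of its solutions there is controlled by estimates for integer points on low-degree curves and surfaces in the spirit of Lemma~\ref{lem:Quadratic}, giving a count of the shape $O\!\left(Z^{4}/p+Z^{o(1)}\right)$ once the degenerate loci (coincident points, vanishing of the determinant $u_1v_2-u_2v_1$) are peeled off and absorbed into the $Z^{o(1)}$ term.

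Finally I would assemble the bound by a moment count. Running the base point over all choices, each triple of points yields, via $(u_1,v_1,u_2,v_2)$, a solution of the origin-free congruence, while conversely a solution recovers the base point from the quadratic congruence up to $O(1)$ choices; hence $T^3$ is dominated by the number of box solutions, which is $O(Z^4/p+Z^{o(1)})$ and delivers $T\le Z^{4/3+o(1)}p^{-1/3}+Z^{o(1)}$. When $U=V$ the point set is invariant under the reflection $(x,y)\mapsto(y,x)$, so every configuration already comes with its mirror image; this extra symmetry removes one degree of freedom, a two-parameter family of configurations suffices, and the relevant solution count drops to $O(Z^3/p+Z^{o(1)})$, now bounding $T^2$ and yielding the sharper $Z^{3/2+o(1)}p^{-1/2}+Z^{o(1)}$. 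The hard part will be precisely the uniform estimate for the number of integer solutions of the eliminated equation inside $(-Z,Z)^4$: one must separate the main term of size $\sim Z^4/p$ from the contributions of the several degenerate subvarieties and make the bound uniform in $a$ and independent of the position $(U,V)$, which is where the quadratic-Diophantine input behind Lemma~\ref{lem:Quadratic} together with the divisor bound~\eqref{eq:tau} are needed.
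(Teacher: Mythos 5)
The survey does not actually prove this statement: it is quoted from~\cite{CillGar} with no argument given, so there is no in-paper proof to compare against and I have to judge your sketch on its own terms. Your algebra is correct and your final double count is sound: subtracting two instances of the congruence does give $x_0v+uy_0+uv\equiv 0\pmod p$, solving the resulting linear system for $(x_0,y_0)$ from two displacements and substituting into $x_0y_0\equiv a$ does give
$$
u_1u_2v_1v_2(u_1-u_2)(v_1-v_2)\equiv -a\,(u_1v_2-u_2v_1)^2\pmod p,
$$
and since a nondegenerate solution recovers $(x_0,y_0)$ up to $O(1)$ choices, $T^3$ is indeed dominated by the number of nondegenerate solutions in $(-Z,Z)^4$. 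The gap is the one claim on which everything rests: that this number is $O(Z^4/p+Z^{o(1)})$. First, this is a \emph{congruence}, not a Diophantine equation: the coefficient $a$ is an arbitrary residue of size up to $p$, so the relation never descends to an identity over $\Z$ however small $Z$ is, and Lemma~\ref{lem:Quadratic} --- which counts integer points on a single affine conic and involves no modulus --- gives no purchase on it; nor does the divisor bound~\eqref{eq:tau}. Second, the claim is essentially circular: a nondegenerate solution determines $(x_0,y_0)\in\cH_{a,p}$ uniquely, so the nondegenerate solutions are in bijection with ordered triples $(P_0,P_1,P_2)$ of points of $\cH_{a,p}$ with $P_1,P_2$ distinct points of the hyperbola in $P_0+(-Z,Z)^2$, where $P_0$ now ranges over \emph{all} $p-1$ points of the hyperbola. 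In other words the quantity you need to bound is $\sum_{P_0}N(P_0)(N(P_0)-1)$ with $N(P_0)$ the concentration function itself, and asserting that this second moment equals its average order $Z^4/p$ up to $Z^{o(1)}$ is a strengthened, all-base-points form of the theorem, not a reduction of it. The $U=V$ case ("the symmetry removes one degree of freedom, so the count drops to $O(Z^3/p+Z^{o(1)})$") is exponent numerology rather than an argument.

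The feature your elimination is missing, and the reason the cited proof can invoke Lemma~\ref{lem:Quadratic} where you cannot, is that one must eliminate \emph{all} large parameters, $a$ included. Writing the congruence for a point $(U+u,V+v)$ as $uv+Uv+Vu+(UV-a)\equiv 0\pmod p$ and treating $U$, $V$, $UV-a$ as three unknowns, any four points force
$$
\det\begin{pmatrix}u_1v_1&v_1&u_1&1\\ u_2v_2&v_2&u_2&1\\ u_3v_3&v_3&u_3&1\\ u_4v_4&v_4&u_4&1\end{pmatrix}\equiv 0\pmod p ,
$$
and every term of this determinant is $O(Z^4)$. Hence for $Z\ll p^{1/4}$ the determinant vanishes over $\Z$, the four points lie on a genuine integral conic $\alpha uv+\beta v+\gamma u+\delta=0$ (which is never a parabola and, if reducible, is a horizontal--vertical line pair carrying at most two hyperbola points), and Lemma~\ref{lem:Quadratic} bounds the choices of the fourth point by $Z^{o(1)}$, giving $T\le Z^{o(1)}$; the supercritical range and the diagonal case then need further counting to reach the exponents $4/3$ and $3/2$. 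Your origin-free relation, by contrast, retains $a$ as a coefficient, so no range of $Z$ turns it into an integer equation, and the difficulty has only been relocated, not resolved.
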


Several more related estimates are given in~\cite{ACZ,BGKS1,BGKS2,CochSih,CoZh,GarGar}.

\subsection{Arithmetic Functions on Points on $\cH_{a,m}$}
\label{sec:Arith Fun}

A modification of the proof of Theorem~\ref{thm:Visible} 
has lead to the asymptotic formula~\eqref{eq:SF prod}. 
There is little doubt that it can also be extended to the case 
where variables are taken intervals of different lengths and also to 
the sums
$$
\sum_{(x,y) \in \cH_m(\cX,\cY)}|\mu(xy)| 
$$
under the same conditions on the sets $\cX$ and $\cY$ as in
Theorem~\ref{thm:Visible}.  

For a prime $p$, asymptotic
formulas for the average values
$$
\sum_{\substack{(x,y)\in \cH_{a,p}(X,Y)\\x \ne
y}}\frac{\varphi(|x-y|)}{|x-y|}\mand
\sum_{\substack{(x,y)\in \cH_{a,p}(X,X)\\x \ne y}} \varphi(|x-y|)
$$
are given in~\cite{Shp7}.

It is also interesting to study sums of other arithmetic
functions on $(x,y) \in \cH_m(\cX,\cY)$.  

\begin{question}
\label{qest:arith sums} For intervals
$$
\cX = \{U+1, \ldots, U+X\}\mand \cY = \{V+1, \ldots, V+Y\}
$$
of length $X,Y \le m$, 
\begin{itemize}
\item obtain nontrivial
bounds for the sums
$$
\sum_{(x,y) \in \cH_{a,m}(\cX,\cY)}\mu(xy) \mand
\sum_{(x,y) \in \cH_{a,m}(\cX,\cY)}\(\frac{x}{y}\),
$$
where $\(x/y\)$ is the {\it Jacobi symbol\/} of $x$ modulo $y$, which we also
extend  to even values of $y$ by simply putting $\(x/y\) = 0$
in this case;
\item obtain estimates or asymptotic formulas for the sums
$$
\sum_{(x,y) \in \cH_{a,m}(\cX,\cY)}\tau(|x-y|) \mand
\sum_{(x,y) \in H_{a,m}(\cX,\cY)} \omega(|x-y|).
$$
\end{itemize}
\end{question}

We remark that in the case when $\cX$ and $\cY$ are very large (compared 
to $m$) intervals of the form
\begin{equation}
\label{eq:diadic}
\cX = \{X+1, \ldots, 2X\}\mand \cY = \{Y+1, \ldots, 2Y\}
\end{equation}
the bound on the sum of Jacobi symbol
$(x/y)$ has been given in~\cite{Yu}. 
In fact, the bound of~\cite{Yu} applies to 
even more general bilinear sums and asserts that for any $\varepsilon > 0$
\begin{equation}
\label{eq:bilin Jacobi}
\sum_{\substack{(x,y) \in \cH_{a,m}(\cX,\cY) \\ 
xy \le Z}}\alpha_x \beta_y \(\frac{x}{y}\)
\ll XY^{15/16 + \varepsilon} +  X^{15/16 + \varepsilon} Y
\end{equation}
provided that 
\begin{equation}
\label{eq:m small 1}
m \le \(\min\{X, Y\}\)^{\varepsilon/3},
\end{equation}
where $\cX$ and $\cY$ are of the form~\eqref{eq:diadic}, 
$Z$ is an arbitrary parameter and the sequences $\alpha_x$ and $\beta_y$
are supported on $\cX$ and $\cY$, respectively, 
and satisfy
$$
|\alpha_x| \le 1, \ x \in \cX\mand 
|\beta_y| \le 1,\ y \in \cY.
$$
Since the sum on the left hand side of~\eqref{eq:bilin Jacobi}
does not exceed $XY/m + \min\{X, Y\}$ we see that the bound 
in nontrivial only if 
\begin{equation}
\label{eq:m small 2}
m \le \(\min\{X, Y\}\)^{1/16 - \varepsilon}.
\end{equation}

Comparing~\eqref{eq:m small 1} and~\eqref{eq:m small 2} we see
that~\eqref{eq:bilin Jacobi} may only be nontrivial if 
$$
 \min\{X, Y\}\ge m^{64}.
$$
Some modifications of the argument of~\cite{Yu} may 
reduce the exponent $64$, but certainly not below 16, 
while in  Question~\ref{qest:arith sums} we ask 
about much shorter sums.

\section{Applications}

\subsection{The Lehmer Problem}
\label{sec:Lehm}

One of the most natural and
immediate applications of the uniformity of distribution
results outlined in Section~\ref{sec:Indiv} is a positive
solution to the {\it Lehmer problem\/},
see~\cite[Problem~F12]{Guy},
about the joint distribution of the parity of $x$ and $y$
for $(x,y) \in   \cH_{a,m}(\cX, \cY)$
with some intervals
$$
\cX = \{U+1, \ldots, U+X\}\mand \cY = \{V+1, \ldots, V+Y\}.
$$
This distribution  is naturally expected to be  close to
uniform (that is, each parity combination is taken about
in 25\% of the cases) for any odd $m\ge 1$ and sufficiently large $X$ and
$Y$.  The Lehmer problem
can easily be reformulated as a question about  the cardinality
of $\#\cH_{a,m}(\widetilde \cX, \widetilde \cY)$ for some
$a$ and some
other sets $\widetilde \cX$ and $\widetilde \cY$ of about $X/2$ and
$Y/2$ consecutive integers, respectively. Indeed, we are interested in
solutions to the congruence
$$
(2\widetilde x+\vartheta_1)(2 \widetilde y+\vartheta_2) \equiv a \pmod m
$$ with some fixed $\vartheta_1, \vartheta_2 \in \{0,1\}$ and
$$
\frac{U+1 - \vartheta_1}{2} \le \widetilde x \le \frac{U+X- \vartheta_1}{2},
\qquad \frac{V+1 - \vartheta_2}{2} \le \widetilde y \le \frac{V+ Y -
\vartheta_2}{2}.
$$
It remains to notice that the above congruence is equivalent to
$$
( \widetilde x+2^{-1}\vartheta_1)(\widetilde y+ 2^{-1}\vartheta_2)
\equiv 4^{-1}a \pmod m
$$
(where the inversion is taken modulo $m$), which after a shift
of variables takes  the desired shape.

Close links between the Lehmer problem and bounds
of Kloosterman sums~\eqref{eq:Kloost} 
has been first observed
in~\cite{Zha1,Zha2}.  The question and the above approach have been
extended  in several directions,
see~\cite{ASZ,CoZa1,CoZa2,Liu,LiuZha1,LiuZha2,LiuZha4,LRS,LuYi0,LuYi0.5,MakZah,
RuSch,XiYi,Xu,XuZha1,YuYi,ZhaXue,Zha3,Zha4,Zha5,Zha5.5,Zha6}
and references therein. In particular, in multidimensional 
generalisations,  instead of the
bound~\eqref{eq:Kloost bound} more general bounds of incomplete or
multiple 
Kloosterman sums 
$$
\sum_{U_1+1\le x_1 \le U_1+X_1} \ldots \sum_{U_n+1\le x_n \le U+n + X_n} 
\em\(r_1x_1 + \ldots + r_nx_n + s(x_1\ldots x_n)^{-1}\),  
$$
have been frequently used.  
However,  it has turned out that for multivariate
analogues of the Lehmer problem, and a  number of similar questions, bounds of
character sums provide a more efficient tool than 
Kloosterman sums~\eqref{eq:Kloost},
see~\cite{Shp11}, where Lemmas~\ref{lem:PVB} and~\ref{lem:4th Moment}
(for prime $m=p$)  and the
bound~\eqref{eq:2nd Moment} have been used to
improve several previous results.

Some ternary additive problems with points $(x,y) \in   \cH_{a,m}$ satisfying 
additional divisibility conditions are considered in~\cite{LuYi1}.
The result of~\cite{LuYi1} has been improved in~\cite{ShpWin3}
where it is shown that one in fact can deal with binary 
additive problems with such numbers. 

A generalisation of the original problem to joint distribution 
of arbitrary monomials and two arbitrary progressions 
is given in~\cite{Shp9},
which improves and generalises some results of~\cite{YiZha}. 

Furthermore, a similar problem has also appeared in a very different context 
related to cryptography. Indeed, it has been shown in~\cite{BCPP}
that the conjecture of~\cite{GorKlap} on  the cyclical distinctness 
of so-called $\ell$-sequences modulo $p$ can be reformulated as the 
conjecture that for any prime $p$ and pair of integers $(a,d) \ne (1,1)$
with
$$
\gcd(d, p-1) =1, \qquad 0<|a|<p/2, \qquad |d|<p/2,
$$
except for six explicitly listed triples of $(p,a,d)$,
at least one residue modulo $p$ of $ax^d$ is odd when $x$ runs 
through all even residues modulo $p$, that is, 
that $ax^d \equiv y \pmod p$ for some  
$$
x \in \{2, 4, \ldots, p-1\} \mand y \in \{1, 3, \ldots, p-2\}. 
$$
In~\cite{GKMS} this conjecture has been established for a wide 
class of triples $(p,d,a)$ and also for all such triples.
Finally, in~\cite{BCPP} it has been proved for all triples 
$(p,d,a)$  provided that $p$ is large enough. 
Clearly $d = -1$ is related to the Lehmer conjecture.
Recently, a decisive progress in a generalised Lehmer conjecture
has been made in~\cite{BCP}.
It is certainly interesting to see how much the methods
of~\cite{BCP,BCPP} can be expanded and carried over to
other problems. For examples, in 
the most general form one can ask about the existence 
and distribution of
solutions to the congruence
$$
x_1^{d_1}\ldots x_s^{d_s} \equiv a \pmod p, \qquad 1 \le x_1, \ldots, x_s <p
$$
in $s$ arithmetic progressions 
$x_i \equiv  b_i \pmod {k_i}$, $i=1, \ldots, s$.

\subsection{Distribution of Angles in Some Point Sets}
\label{sec:Angles}

A version of  Theorem~\ref{thm:UD Gen} has been used 
in~\cite{BoCoZa1} to study the distribution of angles 
between visible points (viewed from the origin) 
in a dilation of a certain plain 
region $\Omega \subseteq [0,1]^2$. More precisely, for 
$\Omega \subseteq [0,1]^2$ and a sufficiently large $Q$, 
we define 
$$
\Omega_Q = \{(Q\alpha, Q\beta)\ : \ (\alpha,  \beta) \in \Omega)\}.
$$
We now consider $ \# \cF_\Omega(Q)$ angles  between the horizontal line
and
the points of the set
$$
\cF_\Omega(Q) = \{(x,y) \in \Omega_Q\cap \Z^2 \ : \ \gcd(x,y) =1 \}. 
$$
The distribution of these angles is studied in~\cite{BoCoZa1} and 
shown to exhibit a somewhat unexpected behaviour. 

A slight generalisation of Theorem~\ref{thm:UD Gen} 
has also played an important role in the study of angles 
defined by some other point sets~\cite{Boca2}, 
see also~\cite{BoZa1,BoZa2,Ust6}.  

\subsection{Distribution  of the Trajectory Length in  
the Periodic Lorentz Gas}
\label{sec:Lonetz}

Links between Theorem~\ref{thm:UD Gen} and some problems of mathematical 
physics can be found in~\cite{BoGoZa,BoZa3,BykUst1,BykUst2}.
 Namely, assume that 
a particle moves along a straight line in a $d$-dimensional lattice
until it falls in a $\delta$-neighbourhood of a lattice point.  
In~\cite{BoGoZa,BoZa3,BykUst1,BykUst2}
the distribution of the trajectory length is studied. 

\subsection{Vertices of Convex Lattice Polygons}
\label{sec:Polygons}

Let $N(r)$ be the number of vertices of the convex hull 
of the set  of
the integral lattice points inside of the ball of radius $R$ 
centered at the origin, that is, of the set
$\{(x,y) \in \Z^2\ : \ x^2+y^2\le r^2\}$. 
It is shown in~\cite{BaDe} that 
for $1 \le H \le R$ we have
$$
\frac{1}{H} \int_{R}^{R+H} N(r)\, dr = \frac{6 \cdot 2^{2/3}}{\pi}
R^{2/3} + O\(HR^{-1/3} + H^{-1} R^{2/3} + R^{5/8 + o(1)}\).
$$
A version of  Theorem~\ref{thm:UD Gen} is one of the crucial ingredients
of the proof.

\subsection{Arithmetic Structure  of Shifted Products}
\label{sec:P(ab+1)}

The main result of~\cite{Stew} on the largest prime divisor 
of the shifted products $ab +1$, when $a$ and $b$ run through 
some reasonably dense sets of integers of a 
sufficiently large interval  $[1,N]$, is based on a variant 
of Theorem~\ref{thm:UD Gen}. The  result of~\cite{Stew} 
has recently been improved in~\cite{Mat}, where it is also 
shown that the above problem is related to counting 
$\# \cH_p(\cX,\cY)$ on average over primes $p$ for arbitrary
sets of integers $\cX, \cY\subseteq \Z$. 

Furthermore, in~\cite{LeBd2}, Theorem~\ref{thm:UD Gen}
and its  generalisations given in~\cite{Shp3,Shp11} (that are based 
on bounds on multiplicative character sums) have been used to study 
$k$th powerfree values of the polynomial $X_1...X_r - 1$, see also~\cite{FoShp2}
for some other related results and alternative approaches to problems
of this type. 

\subsection{Sums of Divisor Functions over 
Quadratic Polynomials and Arithmetic Progressions}
\label{sec:Div Progr}

It has been demonstrated in~\cite{Byk1} that results about the distribution 
of points  $(x,y) \in \cH_{a,m}$ in the regions of the form~\eqref{eq:Curve}
can be used to derive precise asymptotic formulas for sums
of the divisors with quadratic polynomials; for example, for 
the sums
$$
S_D(N) = \sum_{n \le N} \tau(n^2 + D)
$$
where $D\ge 1$ is a squarefree integer. 

Furthermore,  the strength of estimates on
the error terms in asymptotic
formulas for sums of some  divisor  functions over an arithmetic
progression 
$$
S_{a,m}(N) = \sum_{\substack{n \le N\\ n \equiv a \pmod m}} \tau(n)
$$
is closely related to the precision of our knowledge of
$\# \cH_{a,m}(\cX, \cY)$, where $\cX$ and $\cY$ are sets of
consecutive integers and also multidimensional analogues of 
this question,  see~\cite{BHS,Fouv1,FrIw1,FrIw2,HB1,True}. This
link  becomes transparent if one recalls that a standard approach to
evaluating divisor sums is approximating the
   hyperbolic region $\{(x,y)\ : \ x,y \ge 0, \ xy  \le  N\}$ by a
union of ``small'' rectangles.
In fact, as we have mentioned, the proof of Theorem~\ref{thm:UD Aver}
is based on some ideas of~\cite{BHS}, where an asymptotic formula
is given for the sums $S_{a,m}(N)$ ``on average'' over $a$. 
In~\cite{True}, there are also several results, and 
conjectures, for similar average values 
of various restricted versions of the divisor 
functions $\tau(n)$, but with less  averaging.  These 
results are also related to the problem of Section~\ref{sec:PairCorr}.

\subsection{Pair Correlation of Fractional Parts of $\alpha n^2$}
\label{sec:PairCorr}

It is known, see~\cite{MarStr,RudSar},
that  the spacings between  the fractional parts 
of the sequence $\alpha n^2$, $n =1, 2, \ldots$,  
obey a Poisson distribution 
for almost all real $\alpha$. However no specific example
of such $\alpha$ is known. However~\cite[Theorem~2]{HB2} comes
very close to giving such an example. Its proof, amongst other 
technical tools, is also based on tight upper bounds for the divisor 
function over short arithmetic progressions, which in turn 
leads to studying points on modular hyperbolas. In particular,
we note that~\cite[Conjecture~2]{HB2}, can be reformulated as
the upper bound 
$$
\# \cH_{a,m}(\cX,\cY) \ll \frac{\varphi(m)}{m^2} Z^2 
$$
for the sets $\cX = \cY = \{1,\ldots, Z\}$
with an integer $Z \ge m^{1/2 + \varepsilon}$ 
for any fixed $\varepsilon$. Besides, it is shown 
in~\cite{HB2} that one of the approaches to
the distribution of spacings between  the  fractional parts 
of $\alpha n^2$ is via studying the number of solutions 
of the quadratic congruence
$$
x^2-y^2 \equiv c \pmod q, \qquad 1 \le x \le X, \ 1\le y \le Y,
$$
on average over $c$ that runs through the reduced residue classes
modulo $q$. Furthermore, one can find such a bound in~\cite[Lemma~3]{HB2}.
It is shown in~\cite{Shp14} that a slight
generalisation of Theorem~\ref{thm:UD Aver} leads to an 
improvement of~\cite[Lemma~3]{HB2} for some parameter ranges.
However, unfortunately in the range relevant to the pair correlation
problem the corresponding results of~\cite{HB2} and~\cite{Shp14}
are essentially of the same strength. 

Similar links between the distribution of fractional 
parts of $\alpha n^2$, behaviour of the divisor function 
in arithmetic progressions and distribution of points on 
modular hyperbolas have also been exhibited in~\cite{True}. 
It is shown in~\cite{Shp13} that Theorem~\ref{thm:UD Aver} 
can be used to derive improvements on some of the results of~\cite{True}.

\subsection{The Sato--Tate Conjecture in the ``Vertical'' Aspect}
\label{sec:S-T vert}

We also recall that Theorem~\ref{thm:UD Aver}
about the distribution of points on $\cH_{a,m}$
on average over $a$ has been used in studying the
so-called ``vertical'' aspect of the Sato--Tate conjecture
for Kloosterman sums~\eqref{eq:Kloost}, 
see~\cite{Shp6}. The relation is
provided by the identity $K_m(r,s) = K_m(1,rs)$ which holds
if $\gcd(r,m) = 1$.
Clearly for the complex conjugated sum we have
$$
  \overline{K_m(r,s)}  =  K_m(-r,-s) =  K_m(r,s), 
$$
hence we see that   $K_m(r,s)$ is real.
Since by the Weil bound, for any prime $p$, we have
$$
\left| K_p(r,s)\right| \le 2 \sqrt{p}, \qquad \gcd(r,s,p) =1,
$$
see~\cite[Theorem~11.11]{IwKow},
we can now define the angles $\psi_p(r,s)$
by the relations
$$
  K_p(r,s) = 2\sqrt{p} \cos \psi_p(r,s)\mand 0 \le \psi_p(r,s) \le \pi.
$$

The famous {\it  Sato--Tate\/} conjecture asserts that,
in the ``horizontal'' aspect, that is,
for any fixed non-zero integers $r$ and $s$, the angles
$\psi_p(r,s)$ are distributed according to  the
{\it  Sato--Tate  density\/}
$$
  \mu_{ST}(\alpha,\beta) = \frac{2}{\pi}\int_\alpha^\beta  \sin^2 
\gamma\, d \gamma,
$$
see~\cite[Section~21.2]{IwKow}.
More precisely, if $\pi_{r,s}(\alpha, \beta; T)$ denotes the number
of primes $p\le T$ with $\alpha \le \psi_p(r,s) \le \beta$,
the Sato--Tate  conjecture predicts that
\begin{equation}
\pi_{r,s}(\alpha, \beta; T) \sim  \mu_{ST}(\alpha,\beta)  \pi(T), 
\qquad T \to \infty,
\end{equation}
for all fixed real $0 \le \alpha < \beta \le \pi$, 
where, as usual, $\pi(T)$ denotes the total number of primes $p \le T$, 
see~\cite[Section~21.2]{IwKow}.
We remark that for elliptic curves,  the Sato--Tate conjecture
in its original (and the most difficult)
``horizontal'' aspect has recently been settled~\cite{Taylor},
but it still remains open for 
Kloosterman sums~\eqref{eq:Kloost}.
It is shown in~\cite{Shp6} that
Theorem~\ref{thm:UD Aver} implies that
$$
\frac{1}{4RS}\sum_{0 < |r| \le R} \sum_{0< |s| \le S}
\pi_{r,s}(\alpha, \beta; T)   \sim  \mu_{ST}(\alpha,\beta) \pi(T)
$$
provided $RS \ge T^{1 + \eps}$ for some fixed $\eps> 0$.

\subsection{Farey Fractions and Quotients of the Dedekind Eta-function}
\label{sec:Farey}

A result on the average values of some bivariate functions taken over 
points of $\cH_{a,m}$,  has been obtained in~\cite{BoCoZa2} as a tool
in the study of the distribution of Farey fractions
\begin{equation}
\label{eq:Farey}
\cF(T) = \{r/s \in \Q\ : \ \gcd(r,s) = 1, \ 1 \le r <s \le T\}.
\end{equation}
The proof of this result is based on some bounds of incomplete 
Kloosterman sums and 
is based on the same ideas which have led to Theorem~\ref{thm:UD Gen}.

It has also been used~\cite{AXZ} to study the distribution 
of the quotients
of the Dedekind Eta-function $\eta(z)$. 

\subsection{Farey Fractions in Residue Classes and the 
Lang-Trotter Conjecture on Average}
\label{sec:L-T part}

The same arguments  used in the prove of 
Theorem~\ref{thm:UD Aver}, can be used to study 
the distribution of ratios $x/y$, $x\in \cX$, $y \in \cY$,
in residue classes,
where $\cX$ and $\cY$ are sets of the same type as in 
Theorem~\ref{thm:UD Aver}. Combining this with 
an elementary sieve, it has been shown in~\cite{CojShp}
that the set of slightly modified Farey fractions of order $T$, 
that is, the set
$$
\cG(T) = \{r/s \in \Q\ : \ \gcd(r,s) = 1, \ 1 \le r,s \le T\},
$$
is uniformly distributed in residue classes modulo a
prime $p$ provided $T  \ge p^{1/2 +\eps}$ for any fixed $\eps>0$.

In turn, this result has been used in~\cite{CojShp} to improve
upper bounds of~\cite{CojHall}
 for the Lang--Trotter conjectures on Frobenius traces
and Frobenius fields
``on average'' over a one-parametric family of elliptic curves
$$
\E_{A,B}(t): \qquad U^2 = V^3 + A(t)V + B(t)
$$
with some polynomials $A(t),B(t) \in \Z[t]$ when 
the variable $t$ is specialised to the elements of $\cG(T)$. 
A similar result can also be obtained for the set $\cF(T)$ of 
``proper'' Farey fractions given by~\eqref{eq:Farey}.

\subsection{Torsion of Elliptic Curves}
\label{sec:Ell Tors}

A variant of Theorem~\ref{thm:UD Gen}
has been obtained in~\cite{Merel} and applied to estimating
torsion of elliptic curves. More precisely, let  $\cE$
be an elliptic curve over an
algebraic number field $\K$ of degree $d$ over $\Q$.
It is shown in~\cite{Merel} that if  $\cE$
contains a point of prime order $p$ then
$$
p \le d^{3d^2}.
$$

\subsection{Ranks and Selmer Groups of Elliptic Curves}
\label{sec:Ell Ranks}

In~\cite{Yu} the bound~\eqref{eq:bilin Jacobi}
is used to show that for any elliptic curve 
$\cE$ of the form 
$$
\cE: \quad Y^2 = X(X^2 + aX + b) 
$$
where $a$ and $b$ are integers satisfying a certain 
special relation, the sequence of its {\it quadratic twists\/}
$$
\cE_D: \quad DY^2 = X(X^2 + aX + b) 
$$
is of rank $0$ for a positive proportion of squarefree integers $D$. 
We also note that the bound~\eqref{eq:bilin Jacobi}
plays an important role in studying the distribution 
of Selmer groups of similar families of curves, see~\cite{XioZah}.

\subsection{Manin Conjecture for Some del Pezzo Surfaces}
\label{sec:del Pezzo}

Theorem~\ref{thm:UD Gen} has also been obtained in~\cite[Lemma~1]{LeBd1}
where is used a tool to derive an asymptotic formula for 
rational points of bounded height on some del Pezzo surfaces.
A 3-dimensional version of Theorem~\ref{thm:UD Gen} from~\cite{FrIw1}
has been used in~\cite{LeBd3} for a similar purpose. 
These asymptotic formulas correspond to the Manin conjecture 
for these type of varieties.

%
%
%\subsection{Sets with More Sums than Differences}
%\label{sec:Sum/Diff} 
%
%Given a set of integers $\cM$, it is natural to expect 
%that due to symmetry, the set of differences 
%$\cM_{-} = \{m_1-m_2\ : \ m_1,m_2 \in \cM\}$ is of larger cardinality
%than the set of sums $\cM_{+} = \{m_1+m_2\ : \ m_1,m_2 \in \cM\}$.
%There are however many constructions of the sets where in fact
%$\# \cM_{+}$ is much larger that $\# \cM_{-}$, see~\cite{Heg,MaOB,Nat}.
%
%The results of~\cite{EKSY,Khan2}, see, for example~\eqref{eq:Sum/Diff}, give
%natural infinite families of such sets (although they do not establish 
%any new records for the ratio $\log \# \cM_{+}/\log \# \cM_{+}$
%which is  currently $\log 91/\log 83$, see~\cite{Heg}). 

\subsection{Frobenius Numbers}
\label{sec:Frob Numb}

Given a vector $\vec{a}=(a_1, \ldots, a_s)$ of $s$ positive 
integers with $\gcd(a_1, \ldots, a_s)=1$ we define its 
{\it Frobenius Number\/} $F\(\vec{a}\)$ as
the smallest integer $f_0$ such that any integer $f > f_0$
can be represented as
$$
f = a_1 x_1+ \ldots + a_sx_s
$$
with nonnegative integers $x_1, \ldots, x_s$, see~\cite{RaAl} for the background.

It has been demonstrated in~\cite{Sch-Puc,SSU,Ust5} that for the case $s=3$ 
results of the type of Theorem~\ref{thm:UD Gen} become important 
in studying the distribution of the values of $F\(\vec{a}\)$. 

We remark that in the case of $s=3$ it has been conjectured in~\cite{BEZ}
that 
\begin{equation}
\label{eq:BEZ-conj}
F\((a,b,c)\)\le (abc)^{5/8}
\end{equation}
for all vectors $(a,b,c)$ except for some explicitly
excluded family of vectors; see also~\cite[Conjecture~1]{BEZ}
for an even stronger conjecture. 
However the conjectured inequality~\eqref{eq:BEZ-conj} has been disproved 
in~\cite{Sch-Puc}, where it is shown
that for any $a$ there are $\varphi(a)/2$ pairs $(b,c)$ 
with $a< b < c < 2a$  and $\gcd(a,b,c)=1$ for which
$$
F\((a,b,c)\) \ge \frac{a^2}{2}. 
$$
Since $abc \le 4a^3$, this means that the exponent in $5/8$ in~\eqref{eq:BEZ-conj}
has to be increased up to at least $2/3$. 
Since these vectors  are {\it admissible\/} 
in the terminology of~\cite{BEZ} it shows that~\cite[Conjecture~1]{BEZ} fails too.

However one can 
do even better by using Theorem~\ref{thm:UD Gen} and taking 
any pair  $(b,c)$ with $bc \equiv 1 \pmod a$ and $b,c = a^{3/4 + o(1)}$
in the argument of the proof of~\cite[Theorem~1]{Sch-Puc}, 
getting
$$
F\((a,b,c)\) \ge a^{7/4+ o(1)} 
$$
for such vectors $(a,b,c)$. Thus, since $abc= a^{5/2+ o(1)}$, 
we conclude that for any $a$ there are many pairs $(b,c)$  
for which $5/8$ in the conjecture~\eqref{eq:BEZ-conj}
has to be increased up to  $7/10$. Clearly these examples
are quite sporadic and certainly do not belong to the family 
of excluded triples from~\cite{BEZ}.
We also remark that 
the family of  integral vectors of the form 
$$
(a,b,c) = (df -1, d, f) , \qquad d \le f \le d^{1+o(1)},
$$
as $d\to \infty$, 
for which  $abc = a^{2+o(1)}$ leads to the inequality 
$$
F\((a,b,c)\) \ge ab^{1+o(1)} =  (abc)^{3/4+o(1)}.
$$
Furthermore, 
using lower bounds on $H(x,y,z;\N)$, see~\cite{Ford,HalTen}
one can see that the sequence of $a=df-1$ of the above form
is quite dense. 

In the opposite direction, it is shown in~\cite{Ust9}
that for any $a$ and almost all pairs of positive 
integers $(b,c)$ 
with $\max\{b,c\} \le a$ we have 
$$
F\((a,b,c)\) \le   (abc)^{1/2}.
$$

Several other recent results   can  
be found in~\cite{AlGru,AlHeHi,BourSin,FuRob,Sun}.

\subsection{Distribution of Solutions of Linear Equations}
\label{sec:Lin Eq}

The paper~\cite{DiSi}  has  
introduced the question on the distribution of ratios $x/m$ associated 
with the
smallest positive solutions $(x,y)$ to linear equations
$kx - my =1$, on average over the coefficients 
$k$ and $m$ in some  intervals. 
In~\cite{DiSi} it has been studied via continued 
fractions, see also~\cite{Dolg}.
Then, it has been noticed in~\cite{Fuji,Rieg}
that results of the type of Theorem~\ref{thm:UD Gen}
provide a simpler and more direct way to investigate 
the above ratios. Indeed, it is easy  to see that 
this is equivalent to studying the distribution of modular 
inverses  $k^{-1} \pmod m$ which explains the link with 
Theorem~\ref{thm:UD Gen} and of course, 
with Kloosterman sums~\eqref{eq:Kloost}. 
However this argument does not 
take any advantage of averaging over $m$. It is very plausible 
that the results about sums of Kloosterman sums, 
which date back to~\cite{Kuz} can be useful for this 
kind of problem. 

\begin{question} Improve the error terms in the asymptotic formulas
of~\cite{DiSi,Dolg,Fuji,Rieg} by using the results of~\cite{Kuz} 
on cancellations in sums of Kloosterman sums, see also~\cite[Chapter 16]{IwKow}
for the guide to the modern results. 
\end{question}

In~\cite{Shp12}, using some bounds of certain bilinear exponential
sums from~\cite{DuFrIw},  these results have been extended to the case when 
the coefficients $k$ and $m$ run through arbitrary 
but sufficiently dense sets of integers. 

\subsection{Coefficients of Cyclotomic Polynomials}
\label{sec:Cyclo}

Let 
$$
A(n) = \max_{k =0 \ldots, \varphi(n)} |a_n(k)|
$$
be the height of the $n$th cyclotomic polynomial:
$$
\Phi_n(Z) = \sum_{k=0}^{\varphi(n)} a_n(k) Z^k.
$$
In~\cite{GaMo}, Theorem~\ref{thm:UD Gen} has been used to show that for 
any $\varepsilon>0$ and any sufficiently large prime $p$ there exist infinitely many 
pairs $(q,r)$ of distinct primes such that 
$$
A(pqr) \ge \(\frac{2}{3} - \varepsilon\)p,
$$
which disproves a conjecture from~\cite{Bet}.

\subsection{Approximations by Sums of Several Rationals}
\label{sec:Approx}

Following~\cite{Chan2,Chan3}, we consider the problem of obtaining
an upper bound on the approximation of a real $\alpha$ by $s$ rational
fractions with denominators at most $Q$, that is for
$$
\delta_{\alpha,s}(Q) = \min_{1\le q_1, \ldots, q_s \le Q}\left| \alpha -
\frac{r_1}{q_1} - \ldots -
\frac{r_s}{q_s}
\right|
$$
with positive integers  $q_1, \ldots, q_s \le Q$.
The question is motivated by the Dirichlet theorem on
rational approximations
which corresponds to the case $s=1$.

It is shown in~\cite{Chan2,Chan3} that  the results on the distribution
of points on $\cH_{a,m}$ and its multivariate analogues 
are directly related to this
question and imply nontrivial bounds on $\delta_{\alpha,s}(Q)$.

In particular, it is remarked in~\cite{Chan2} that for $s=2$ 
the question of Section~\ref{sec:Visible} on 
visible points on modular hyperbolas becomes relevant
and the result of~\cite{Shp2} implies~\cite[Conjecture~5]{Chan2} 
with any $\vartheta > 3/4$.

Furthermore, it is shown in~\cite{Shp8} that in some cases, 
using Theorem~\ref{thm:UD Aver}, one can improve some of 
the results of~\cite{Chan3}.

\subsection{$\SL_2(\F_p)$ Matrices of Bounded Height}
\label{sec:SL mat}

For a finite field  $\F_p$ of $p$  elements
and a positive integer $T \le (p-1)/2$, we use
  $N_p(T)$ to denote the number of matrices
$$
\begin{pmatrix}  u & v\\ x & y
\end{pmatrix} \in \SL_2(\F_p)
$$
with $|u|,|v|,|x|, |y| \le T$
(we assume that $\F_p$ is represented by
  the elements of the  set $\{0, \pm 1, \ldots, \pm (p-1)/2\}$).

Clearly
$$
N_p(T) = \sum_{a \in \F_p} \# \cH_{a+1,p}(\cT,\cT)
  \#\cH_{a,p}(\cT,\cT)
$$
where $\cT = \{0, \pm 1, \ldots, \pm T\} $ and also
$\# \cH_{0,p}(\cT,\cT) = 4T + 1$.

It has been shown in~\cite{AhmShp} that using the identity
$$
\sum_{a \in \F_p}   \# \cH_{a,p}(\cT,\cT) = (2T+1)^2
$$
and Theorem~\ref{thm:UD Aver}, one can derive that
$$
N_p(T) = \frac{(2T+1)^4}{p} + O\(T^2p^{o(1)}\), 
$$
which is nontrivial if $T \ge p^{1/2 + \varepsilon}$
for any fixed $\varepsilon>0$ and sufficiently large $p$.
More general results on the distribution of 
determinants of  $n$-dimensional matrices with 
entries from an arbitrary (but sufficiently large)
set $\cA \subseteq \F_p$ are given in~\cite{Vinh1},
see also~\cite{Vinh2}. 
We remark that these are   $\F_p$ analogues of the results of similar spirit
for matrices over $\Z$ and algebraic number fields,
see~\cite{DRS,Roet} and references therein. Furthermore, 
if one is only interested in the existence of $\SL_2(\F_p)$ matrices 
with bounded entries then one can apply the results 
from~\cite{AyyCoch} about the existence of solutions to
congruences 
$$
a xv +b yu \equiv c \pmod m
$$
in prescribed intervals.

\subsection{Matrix Products and Continued Fractions}
  \label{sec:Mat Prod}

A variant of Theorem~\ref{thm:UD Gen} plays an important role
in obtaining an asymptotic formula for the number of products of the matrices
$$
\left[\begin{matrix} 1&1\\0&1  \end{matrix}\right]
 \mand \left[\begin{matrix} 0&1\\1&1\end{matrix}\right]
$$
which are of trace at most $N$, see~\cite{Boca1}. In turn, this question
is related to studying the number of reduced quadratic irrationalities
whose continued fraction expansions are of period
length at most $L$. 

Theorem~\ref{thm:UD Gen} has also been used in~\cite{Ust1} 
to estimate a certain  weighted sum over points of $\Hb1$
which appears in studying statistical properties of 
continued fractions of  rational numbers from 
a certain set such as $a/b$ with $a,b\ge 1$ and $a^2 +b^2 \le R$, 
see also~\cite{Avd,Byk2,Ust2,Ust3,Ust7,Ust8}
for other versions and applications of Theorem~\ref{thm:UD Gen}. 

In~\cite{Mosh2}, one can find several results about 
rational fractions $a/m$ with 
partial quotients bounded by a certain parameter $k$.  
The approach of~\cite{Mosh2}
rests on  
a series of very interesting results about the distribution 
of points of $\cH_{a,m}$.

\subsection{Computing Discrete Logarithms and Factoring}
  \label{sec:Dlog Fact}

It has been discovered in~\cite{Sem2} that the distribution of
points on  hyperbolas $\cH_{a,m}$ has direct links with
analysis of some discrete logarithm algorithms. Namely, analysis
of the algorithm from~\cite{Sem2} rests on a weaker version
of Theorem~\ref{thm:UD Aver} obtained in~\cite{Sem1}.

\begin{question} Study whether Theorem~\ref{thm:UD Aver}
can be used to improve
some  of the results of~\cite{Sem2}.
\end{question}

A new deterministic integer factorisation algorithm has recently been
suggested in~\cite{Rub}. Its analysis depends on a variant of
Theorem~\ref{thm:UD Gen}.

\section{Concluding Remarks}

\subsection{Multivariate Gerneralisations}

Multidimensional variants of the above problems have also been
considered and in principle one can use bounds
of multidimensional Kloosterman sums~\eqref{eq:Kloost}
to study the distribution of solutions to the congruence
\begin{equation}
\label{eq:s dim}
x_1 \ldots x_s \equiv a \pmod m
\end{equation}
in the same fashion as in the case of two variables.
However, it has been shown in~\cite{Ayy} that in many cases bounds of
multiplicative character sums lead 
to much stronger results.
For example, in~\cite{Shp11}, using the bounds of Lemma~\ref{lem:PVB}, 
a previous less general version of Lemma~\ref{lem:4th Moment} and the bound~\eqref{eq:2nd Moment}, an improvement
is given of some results of~\cite{ASZ} on the generalised Lehmer problem,
see also~\cite{YuYi}.
Similar ideas have also led in~\cite{Shp3} to some other results on
distribution of solutions to~\eqref{eq:s dim}. For instance,
let $\Delta_{s,a,m}$ denote the discrepancy
of the $s$-dimensional points
$$
\(\frac{x_1}{m},\ldots, \frac{x_s}{m}\) \in [0,1]^s, \qquad x_1 
\ldots x_s \equiv
a \pmod m,
\ 1 \le x_1, \ldots, x_s \le m.
$$
It is shown in~\cite{Shp3} that
$$
\Delta_{s,a,m} \le
\left\{\begin{array}{rll}m^{-1/2 + o(1)} &\text{if}\ s=3, \\
m^{- 1 + o(1)} &\text{if}\ s \ge 4.\end{array}\right.
$$
As we have mentioned, a multidimensional analogue of 
Theorem~\ref{thm:Visible} is given in~\cite{ShpWin2}. 
Using Lemma~\ref{lem:4th Moment}
in its present form one can easily generalise the 
above results. 

We now show how multiplicative characters can be used on 
the example of  counting  the number of solutions to the congruence
$$
x_1 \ldots x_s \equiv
a \pmod m,
\qquad 1 \le x_i \le X_i, \ i =1\ldots, s,
$$
for some  integers $X_1, \ldots, X_s\ge 1$, 
which we denote as $N_s(a,m; X_1, \ldots, X_s)$.

Assuming that $\gcd(a,m)=1$ and
using the identity~\eqref{eq:Ident mult char}, we write
$$
N_s(a,m; X_1, \ldots, X_s) =
\sum_{x_1 = 1}^{X_1} \ldots \sum_{x_s = 1}^{X_s} 
\frac{1}{\varphi(m)} \sum_{\chi \in \varPhi_m}
\chi\(x_1 \ldots x_s a^{-1}\).
$$
Changing the order of summation and separating the contribution 
from the principal character $\chi_0$, which gives the main 
term $X_1 \ldots X_s/\varphi(m)$, we obtain
$$
\left|N_s(a,m; X_1, \ldots, X_s) -
\frac{X_1 \ldots X_s}{\varphi(m)}\right|
\le \frac{1}{\varphi(m)}
\sum_{\substack{\chi \in \varPhi_m\\ \chi \ne \chi_0}}
\prod_{i=1}^s\left| \sum_{x_i = 1}^{X_i} \chi\(x_i\)\right|.
$$

We now assume that $s\ge 4$. Then  we apply Lemma~\ref{lem:PVB}
to $s-4$ sums in the above (say, for $i=5,\ldots, s$)
and then use the H{\"o}lder inequality. This leads to the estimate
\begin{eqnarray*}
\lefteqn{\left|N_s(a,m; X_1, \ldots, X_s) -
\frac{X_1 \ldots X_s}{\varphi(m)}\right|}\\
& & \quad \le \frac{1}{\varphi(m)}  m^{(s-4)(\nu+1)/4\nu^2 + o(1)}
\prod_{i=5}^s X_i^{1 -1/\nu}   \prod_{i=1}^4
\( \sum_{\substack{\chi \in \varPhi_m\\ \chi \ne \chi_0}}
\left| \sum_{x_i = 1}^{X_i} \chi\(x_i\)\right|^4\)^{1/4}.
\end{eqnarray*}
We now apply Lemma~\ref{lem:4th Moment} to the product 
of 4th powers. Certainly depending on the relative 
sizes of $X_1, \ldots, X_s$, Lemma~\ref{lem:PVB} can
give better results if applied to different sums and
maybe with different values of $\nu$ for each sum.

In~\cite{Gar3,Gar6} the above approach based on using 
multiplicative character sums instead of exponential
sums has been complemented  with a very interesting new 
ingredient, which is based on the results and ideas
of~\cite{FrIw2,Hux}.
In particular, it is shown in~\cite{Gar6} that
$$N_3(a,m; X_1, X_2, X_3) > 0
$$
and  if $m$ is cubefree then 
aslo 
$$N_4(a,m; X_1, X_2, X_3, X_4) > 0
$$
for all but $o(m)$ residue
classes modulo $m$, provided that 
$X_1X_2X_3>m^{1 + \varepsilon}$ and $X_1X_2X_3X_4>m^{1 + \varepsilon}$, 
respectively, for some fixed $\varepsilon>0$. This line of research is
very new and promising. 

We, however, note that  even in the multidimensional case, 
multiplicative character sum techniques do not always
win over the  exponential sum approach. For example, 
it seems that for the results of~\cite{Chan5} 
the bound on Kloosterman 
sums~\eqref{eq:Kloost} gives  a more powerful
and appropriate  tool than bounds of  multiplicative character sums. 

Upper bounds on the number of solutions of the congruence~\eqref{eq:s dim}
with variables from short intervals are given in~\cite{BGKS1,CillGar}. 
For example, it is shown in~\cite{BGKS1} that for
a prime $m = p$  and a positive integer
$$
h<p^{1/(s^2-1)}
$$
there are at most  $h^{o(1)}$ solutions in interval of length $h$, 
that is with $x_i \in [k_i+h]$, for some integers $k_i$,
$i =1, \ldots, s$, see also~\cite{BGKS2}.

It is worth noticing that one has to be  careful
with posing multidimensional generalisations, which sometimes lead
to rather simple questions  for $s \ge 3$ (despite that for $s=2$ they
are nontrivial at all).  For example, it has been
noticed in~\cite{Shp3} that often  such generalisations do  not need any
analytic technique used in~\cite{ZhaZha} but follow  immediately
from a very elementary argument. 

\subsection{Gerneralisations to Other Congruences and
Algebraic Domains}

We believe  that it is interesting to explore how much of the theory
developed for modular hyperbolas can be extended to modular circles
$$
\cC_{a,m} = \{(x,y) \ : \  x^2 + y^2 \equiv a \pmod m\}.
$$
Well-known parallels between the properties
of integer points on hyperbolas and circles on the Euclidean plane,
suggest that many of the results obtained for $\cH_{a,m}$
can be extended to $\cC_{a,m}$. We also recall that~\cite[Lemma~3]{HB2}
gives a version of Theorem~\ref{thm:UD Aver} for the 
number of solutions to the congruence
$$
x^2 - y^2 \equiv a \pmod m, \qquad 1 \le x,y\le X,
$$
on average over $a$.

One can also consider the analogue  of the question of 
Section~\ref{sec:SL mat} for higher dimensional matrices.
For example, for it is shown in~\cite{AhmShp} that for 
$T\ge p^{3/4 + \varepsilon}$ the number of matrices
$$
\(x_{ij}\)_{i,j=1}^n\in \SL_2(\F_p)
$$
with $|x_{ij}| \le T$, $i,j =1,\ldots, n$, is 
asymptotic to its expected value $T^{n^2}/p$. This result is based on
different arguments which rely on the results of~\cite{Fouv2,FoKa} 
(and in fact apply to a wide class of polynomial equations).
 
\begin{question}  Close the 
gap between the thresholds $T\ge p^{1/2 + \varepsilon}$ for $n =2$
and $T\ge p^{3/4 + \varepsilon}$ for $n \ge 3$.
\end{question}

In~\cite{HuLi}, using bounds of~\cite{FHLOS}
for exponential sums with matrices, the join distribution
of elements of a matrix $A \in \GL_n(\F_p)$ and its inverse $A^{-1}$
has been studied. In particular, in~\cite{HuLi} some analogues
of the result of~\cite{BeKh} have been derived. 

One of the interesting and still  unexplored lines of research is 
studying 
function field generalisations, which   conceivably should
admit results of the same strength or maybe even stronger as
in the case of residue rings.
For example,  a polynomial analogue of the
conjecture of~\cite{EOS} could be more accessible.

\begin{question} Let $\F_q$ be a
finite field of $q$ elements.
Given an irreducible polynomial $F(X)\in \F_q[X]$ of
sufficiently large degree $d$, show that for any
polynomial $A(X) \in \F_q[X]$, relatively prime to $F(X)$,
there are two irreducible polynomials $G(X),H(X) \in \F_q[X]$
of degree at most $d$ such that
$$
G(X) H(X) \equiv A(X) \pmod{F(X)}.
$$
\end{question}

It is also natural to ask about possible generalisations of 
Theorems~\ref{thm:UD Gen} and~\ref{thm:UD Aver} to matrix equations. 

\begin{question}  Obtain asymptotic formulas, individually for every 
$n\times n$ matrix $A$ over $\Z/m\Z$ and also on average over all
such matrices, for the number of solutions of the congruence
$$
XY \equiv A \pmod m
$$
with matrices $X = \(x_{ij}\)_{i,j=1}^n$ and $Y = \(y_{ij}\)_{i,j=1}^n$
where  $x_{ij} \in \cX_{ij}$, $y_{ij} \in \cY_{ij}$ for some ``interesting'' sets 
$\cX_{ij},\cY_{ij} \subseteq \Z/m\Z$, $i,j =1,\ldots, n$ (for example,
when elements belong to prescribed short intervals).
\end{question}

\subsection{Ratios Instead of Products}

Questions about the distribution of points on 
modular hyperbolas can be reformulated as questions 
about the distribution of products $xy$ in residue classes
modulo $m$. In turn this naturally leads to 
similar  questions about the
distribution of ratios $x/y$ (with $\gcd(y,m)=1$)
in residue classes. Although typographically similar, 
many aspects of these  new questions are very different.

For example, we have already mentioned in Section~\ref{sec:Indiv}
that for any prime $p$,  any  integer $a$ 
can easily be shown to 
be represented as $x/y \equiv a \pmod p$ for some integers $x$ and $y$ with
$|x|,|y|\le p^{1/2}$, while for the products this statement is
not correct and even obtaining a relaxed statements with
$|x|,|y|\le p^{\gamma}$ for some  $\gamma < 3/4$ 
is still an open problem (and apparently is very hard).

Furthermore, by Theorem~\ref{thm:UD Gen}, every integer $a$
can be represented in the form  $xy \equiv a \pmod p$ for some integers $x$ and $y$ with
$1 \le x,y\le p^{3/4 +\varepsilon}$ for any $\varepsilon$ and sufficiently
large $p$, while obviously the congruence 
$x/y \equiv -1 \pmod p$ has no solution with $1 \le x,y\le p/2$.

On the other hand, in~\cite{Shp6}, 
a  full analogue of  Theorem~\ref{thm:UD Aver} 
is given for the ratios $x/y$ as well, 
see also~\cite{CojShp,Gar1,GarKar2,Sem1}
and Section~\ref{sec:L-T part}.

Further investigation of distinctions and similarities between
these two groups of questions is a very interesting direction of 
research.

\subsection{Further Perspectives}

Here we mention some results which have not been
used in this area, which we believe can lead 
to some new directions of research.

We have seen that bounds of Kloosterman 
sums~\eqref{eq:Kloost} 
and several other celebrated number theoretic results and techniques, 
play a
prominent role in this field. Still, it is highly important to
further extend the scope of number theoretic tools which 
can be used for studying the points on modular hyperbolas and
their generalisations.  In particular, it would be  very 
interesting to find 
new   applications of the
bounds of very short
incomplete Kloosterman sums 
from~\cite{Bour1,Kar1,Kar2,Kor,Luo,Shp4,WaLi}.

Furthermore, one may expect that the bounds of bilinear sum
$$
\sum_{M \le m \le 2M} 
\sum_{X \le x \le 2X} \alpha_m \vartheta_x \em(a x^{-1}), \qquad a \in \Z,
$$
from~\cite{DuFrIw} (where  $(\alpha_m)$ and  $(\vartheta_x)$
are arbitrary sequences supported on the intervals $[M,2M]$ 
and $[X, 2X]$, respectively) can be useful for studying the 
points on $\cH_{a,m}$ in very general sets on 
average over moduli $m$ taken from another general set. 
One of such applications has been mentioned in Section~\ref{sec:Lin Eq}.

In the same spirit, finding   applications of the bounds of sums of 
Kloosterman sums~\eqref{eq:Kloost} 
to the study of points on $\cH_{a,m}$  would be of great interest,
see~\cite[Chapter~16]{IwKow} for a background on such results.

\end{document}